\documentclass[12pt]{amsart}

\usepackage{amsmath,bm}
\usepackage[color=yellow,textwidth=2cm,colorinlistoftodos]{todonotes}
\usepackage{hyperref}
\usepackage[nameinlink,capitalize]{cleveref}
\usepackage{dsfont}
\usepackage[english]{babel}

\newcommand{\summe}[2]{\displaystyle\sum_{#1}^{#2}}
\newcommand{\summezwei}[2]{\sum_{#1}^{#2}}

\newcommand{\integral}[4]{\displaystyle\int\limits_{#1}^{#2}{#3}\,{#4}}

\newcommand{\indikator}[2]{\mathds{1}_{{#1}}({#2})}
\newcommand{\indikatorzwei}[1]{\mathds{1}_{{#1}}}

\newcommand{\bigerwartung}[1]{\mathbb{E}\left({#1}\right)}

\newcommand{\konv}[2]{\xrightarrow[({#1}\rightarrow{#2})]{}}

\newcommand{\konviv}{\Longrightarrow}

\newcommand{\limes}[2]{\lim \limits_{#1 \rightarrow #2}}

\newcommand{\ft}[1]{\widehat{{#1}}}
\newcommand{\B}{\mathcal{B}}
\newcommand{\im}{\mathrm{i}}
\newcommand{\C}{\mathbb{C}}

\newcommand{\R}{\mathbb{R}}
\newcommand{\Q}{\mathbb{Q}}

\newcommand{\N}{\mathbb{N}}

\newcommand{\Pro}{\mathbb{P} } 
\newcommand{\eps}{\varepsilon} 
\newcommand{\secret}[1]{}

\newcommand{\norm}[1]{\| {#1}\|}
\newcommand{\Li}[1]{\text{L}(\R^{#1})}

\newcommand{\bignorm}[1]{ \left \Vert {#1} \right \Vert}
\newcommand{\skp}[2]{ \langle {#1},{#2} \rangle}

\newcommand{\skpzwei}[2]{ \left \langle {#1},{#2} \right \rangle}

\newtheorem{theorem}{Theorem}[section]
\newtheorem{lemma}[theorem]{Lemma}
\newtheorem{prop}[theorem]{Proposition}
\newtheorem{cor}[theorem]{Corollary}

\theoremstyle{definition}
\newtheorem{defi}[theorem]{Definition}
\newtheorem{example}[theorem]{Example}

\theoremstyle{remark}
\newtheorem{remark}[theorem]{Remark}

\numberwithin{equation}{section}
\parskip=10pt
\parindent=0pt
\setlength{\evensidemargin}{0in}
\setlength{\oddsidemargin}{0in}
\setlength{\textwidth}{6.5in}

\begin{document}
\sloppy
\title[]{Operator-stable and operator-self-similar random fields}

\author{D. Kremer}
\address{Dustin Kremer, Department Mathematik, Universit\"at Siegen, 57068 Siegen, Germany}
\email{kremer\@@{}mathematik.uni-siegen.de}

\author{H.-P. Scheffler}
\address{Hans-Peter Scheffler, Department Mathematik, Universit\"at Siegen, 57068 Siegen, Germany}
\email{scheffler\@@{}mathematik.uni-siegen.de}

\date{\today}

\begin{abstract}
Two classes of multivariate random fields with operator-stable marginals are constructed. The random fields $\mathbb{X}=\{X(t):t \in \R^d\}$ with values in $\R^m$ are invariant in law under operator-scaling in both the time-domain and the state-space. The construction is based on operator-stable random measures utilising certain homogeneous functions.
\end{abstract}
\maketitle
\section{Introduction}
The notion of self-similarity of stochastic processes and random fields yields to a rich class of stochastic models with application in various fields, such as physics, ground water hydrology and mathematical finance, just to mention a few. See for instance \cite{Abry}, \cite{Embrechts} and \cite{Vehel}. \\
Recall from \cite{bms} that a scalar-valued random field $\mathbb{X}=\{X(t):t \in \R^d\}$ is called \textit{operator-scaling}, if for some $d \times d$ matrix $E$ with positive real parts of the the eigenvalues we have
\begin{equation} \label{eq:22111701}
\{X(r^E t): t \in \R^d \} \overset{fdd}{=} \{r X( t): t \in \R^d \}
\end{equation}
for all $r>0$. Here $\overset{fdd}{=}$ denotes equality of all finite-dimensional marginal distributions. As usual $r^E=\exp ((\ln r)E)$ is the matrix exponential, see \eqref{eq:22111710} below. Observe that if $E= \frac{1}{H} I_d $, where $I_d$ is the identity matrix, then \eqref{eq:22111701} is just the well-known self-similarity property with \textit{Hurst-index} $H>0$ as first studied in \cite{lamperti}. \\
In \cite{bms} two  usually different representations of operator-scaling random fields with symmetric $\alpha$-stable ($S \alpha S$) marginals are presented. Those representations are based on $S \alpha S$  random measures and so called $E$-homogeneous functions. More precisely, for $0 < \alpha \le 2$ let $M_\alpha$(ds) be an independently scattered $S \alpha S$ random measure on $\R^d$ with Lebesgue control measure and $\phi$ be an $E$-homogeneous  function (see chapter 4 below for a definition). Then with $q=trace(E)$ the random field defined by the \textit{moving-average} representation  
\begin{equation} \label{eq:16051701}
X(t)=\integral{\R^d}{}{ \left( \phi(t-s)^{1-\frac{q}{\alpha} } - \phi(-s)^{1-\frac{q}{\alpha} } \right) }{M_{\alpha}(ds)}
\end{equation} 
satisfies \eqref{eq:22111701}. For a complex-valued isotropic $S \alpha S$ random measure $\widetilde{M}_{\alpha}(ds)$ with Lebesgue control measure the \textit{harmonizable} representation is given by 
\begin{equation}  \label{eq:16051702}
\widetilde{X}(t)=\text{Re } \integral{\R^d}{}{ \left( \text{e}^{\im \skp{t}{s}}-1 \right) \phi(s)^{-1-\frac{q}{\alpha} } }{\widetilde{M}_{\alpha}(ds)}
\end{equation}
and again satisfies \eqref{eq:22111701}.  Note that as shown in \cite{bms} both fields are stochastically continuous and have stationary increments. \\
In the multivariate case, that is for $\R^m$-valued random fields $\mathbb{X}=\{X(t):t \in \R^d\}$ much less is known. To our knowledge, a first rigorous investigation of those fields was initiated in \cite{LiXiao}. Since $X(t)$ is $\R^m$-valued it is usual to scale both time $t \in \R^d$ and the space $\R^m$ by matrices. It is shown in Theorem 2.2. of \cite{LiXiao} that under rather general conditions there exists a $m \times m$ matrix $D$, called the space exponent, such that 
\begin{equation} \label{eq:22111705}
\{X(r^E t): t \in \R^d \} \overset{fdd}{=} \{r^D X( t): t \in \R^d \}
\end{equation}
for all $r>0$ holds true. $\mathbb{X}$ is then called an $(E,D)$-operator-self-similar random field. As in the scalar valued case, in \cite{LiXiao} a moving-average and a harmonizable representation  based on multivariate $S \alpha S$ random measures and certain $E$-homogeneous functions are presented. \\
Multivariate stable laws are a special case of operator-stable laws which allow much more modelling flexibility. In fact for multivariate stable laws the tail behavior is equal in every direction, whereas operator-stable laws usually have different tail behavior in different directions. See \cite{thebook} for a comprehensive introduction to operator-stable laws and their limit theorems. \\
The purpose of this paper is to give both a moving-average and a harmonizable representation of an $(E,D)$-operator-self-similar random field with operator-stable marginals. Using the recently developed theory of multivariate infinitely-divisible random measures in \cite{integral} we first construct operator-stable random measures in chapter 2 of this paper. It is well-known that (operator) self-similarity is strongly connected to limit theorems. In chapter 3 we investigate so called domains of attraction of random fields and show that this naturally leads to $(E,D)$-operator-self-similar random fields. Then in chapter 4 and 5 the moving-average and harmonizable representation of such fields with operator-stable marginals is given and some basic properties are analyzed.
\section{Operator-stable distributions and random measures} \label{chapter2}
Let (G)L$(\mathbb{\R}^m)$ be the set of all (invertible) linear operators on $\R^m$, represented as $m \times m$ matrices.  Then, with a little abuse of notation, $\norm{\cdot}$ denotes the Euclidian norm on $\R^m$ (with inner product $\skp{\cdot}{\cdot}$) as well as the operator norm on L$(\mathbb{\R}^m)$ which is induced by the previous vector space norm. As usual the matrix exponential of $A \in \Li{m}$ is defined via
\begin{equation} \label{eq:22111710}
s^{A}:=\exp ( (\ln s)A):=\summe{j=0}{\infty} \frac{(\ln s)^j}{j!} A^j  \quad \in \text{GL}(\R^m)
\end{equation} 
for every $s>0$ with $A^{0}:=I_m$ being the identity operator on $\R^m$. See chapter 2.2 in \cite{thebook} for a comprehensive summary of the most important properties. Let
\begin{equation*}
\lambda_A:=\min \{\text{Re } \lambda : \lambda \in \text{spec}(A)\}, \quad \Lambda_A:=\max \{\text{Re } \lambda : \lambda \in \text{spec}(A)\},
\end{equation*}
where $\text{spec}(A)$ denotes the spectrum of $A \in \Li{m}$, i.e. the set of all complex roots of the characteristic polynomial. Then it follows from Theorem 2.2.4 in \cite{thebook} (for sharper estimates see Lemma 3.2 in \cite{BiLa09}) that
\begin{equation} \label{eq:17051701}
\forall s_0,\delta>0 \, \, \exists C_1,C_2>0: \quad \norm{s^A} \le \begin{cases} C_1 s^{\lambda_A-\delta}, & 0<s \le s_0, \\ C_2 s^{\Lambda_A+\delta}, & s \ge s_0.  \end{cases}
\end{equation}
Finally let $Q(\R^m):=\{A \in \text{L}(\R^m):\lambda_A>0\}$. It then follows from Lemma 6.1.5 in \cite{thebook} that for $A \in Q(\R^m)$ there exists a norm $\norm{\cdot}_A$ on $\R^m$ such that $ (s,\theta) \mapsto s^{A} \theta$ defines a homeomorphism $\Psi:(0,\infty) \times S_A \rightarrow \Gamma_m:=\R^m \setminus \{0\}$, where $S_A=\{x \in \R^m: \norm{x}_A=1\}$. Then we call $(\tau_{(A)}(x),l_{(A)}(x)):=\Psi^{-1}(x)$ the \textit{(generalized) polar coordinates} for $x \in \Gamma_m$ w.r.t $A$. Note that $S_A=\{x: \tau(x)=1\}$ as well as $\tau(x)=\tau(-x)$ and $\tau(s^{A}x)= s \tau(x)$ by uniqueness. The following relation between $\tau(\cdot)$ and $\norm{\cdot}$ is given by Lemma 2.1 of \cite{bms}. Hence for every $\delta>0$ and $s_0>0$ there exist $C_1,C_2,C_3,C_4>0$ such that
\begin{align}
C_1 \norm{x}^{1/ \lambda_A+\delta} \le & \tau(x) \le C_2 \norm{x}^{1/ \Lambda_A-\delta}, \quad \text{if $\tau(x) \le s_0$,}    \label{eq:18051701} \\
C_3 \norm{x}^{1/ \Lambda_A-\delta} \le & \tau(x) \le C_4 \norm{x}^{1/ \lambda_A+\delta}, \quad \text{if $\tau(x) \ge s_0$.}   \label{eq:18051702} 
\end{align}
Turning over to probability,  it is well-known that $\varphi=\exp(\psi)$ with $\psi:\R^m \rightarrow \C$ is the Fourier transform (or characteristic function) of an infinitely-divisible distribution on $\R^m$ if and only if $\psi$ can be represented as
\begin{equation*}
\psi(u)=\im  \skp{\gamma}{u} - \frac{1}{2} \skp{Qu}{u} + \integral{\R^m}{}{\left( \text{e}^{\im \skp{u}{x}} - 1 - \frac{\im \skp{u}{x} }{1+\norm{x}^2} \right)}{\nu(dx)} , \quad u \in \R^m
\end{equation*}
for a \textit{shift} $\gamma \in \R^m$, some \textit{normal component} $Q \in \Li{m}$ which is symmetric and positive semi-definite and a L\'{evy} measure $\nu$, i.e. $\nu$ is a measure on $\R^m$ with $\nu(\{0\})=0$ and $\int_{\R^m} \min\{1,\norm{x}^2\} \, \nu(dx)< \infty$. For the distribution $\mu$ with $\ft{\mu}=\varphi$ we write $\mu \sim [\gamma, Q, \nu]$ as $\gamma, Q$ and $\nu$ are uniquely determined by $\mu$. $\psi$ is the only continuous function with $\psi(0)=0$ and $\ft{\mu}=\exp(\psi)$, subsequently referred to as the \textit{log-characteristic function} of $\mu$. In view of Theorem 3.4.1 in \cite{thebook} we can generally define $f^s$ for $s>0$, whenever $f:\R^m \rightarrow \C \setminus \{0\}$ is continuous with $f(0) \in \R_{+}$. In this case $f^s$ describes the Fourier transform of an infinitely-divisible distribution $\mu^{s}$ as long as $f$ itself is the Fourier transform of an infinitely-divisible distribution $\mu$. \\ Next we consider a sequence $X,X_1,X_2,...$ of independent and identically distributed (i.i.d.) $\R^m$-valued random vectors. Then $X$ (or the distribution $\mathcal{L}(X)$ of $X$) is called \textit{operator-stable}, if there exist operators $A_n \in$ GL$(\R^m)$ and \textit{shifts} $a_n \in \R^m$ such that
\begin{equation} \label{eq:18051704}
A_n(X_1+ \cdots + X_n)+a_n \overset{d}{=}X, \quad n \in \N,
\end{equation}
where $\overset{d}{=}$ denotes equality of the corresponding distributions. If additionally $a_n=0$ holds, then $X$ is called \textit{strictly} operator-stable. Often we can show \eqref{eq:18051704} for $A_n=n^{-B}$ and call $X$ operator-stable with \textit{exponent} $B$. This is motivated by the following characterization which is due to Theorem 7.2.1 in \cite{thebook}. Here we say that a distribution on $\R^m$ is \textit{full}, if it is not concentrated on any hyperplane of $\R^m$. 
\begin{theorem} \label{18051710}
A full probability measure on $\R^m$ is operator-stable if and only if it is infinitely-divisible such that there are an operator $B \in Q(\R^m)$ as well as vectors $(a_s)_{s>0} \subset \R^m$ sucht that the following relation holds in terms of the Fourier transform $\ft{\mu}$:
\begin{equation} \label{eq:18051706}
\ft{\mu}(u)^s =\ft{\mu}(s^B u) \exp(\im \skp{a_s}{u}) \quad \text{for all $s>0$ and $u \in \R^m$.}
\end{equation}
Then $\text{spec}(B)$ is uniquely determined by $\mu$, but not the so called exponent $B$ itself. Furthermore, we generally have $\lambda_B \ge \frac{1}{2}$, but $\lambda_B> \frac{1}{2}$ if and only if $\mu$ has no Gaussian component.
\end{theorem}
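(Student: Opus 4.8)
The plan is to establish the equivalence in two directions and then the two supplementary claims. The implication from \eqref{eq:18051706} to operator-stability is the routine one: assuming $\mu$ is infinitely-divisible with exponent $B\in Q(\R^m)$, I would specialise \eqref{eq:18051706} to $s=n\in\N$ and substitute $u\mapsto n^{-B}u$. Since $B\in Q(\R^m)$ ensures $n^{-B}\in\GL(\R^m)$, this rearranges to $\ft{\mu}(n^{-B}u)^{n}=\ft{\mu}(u)\exp(\im\skp{\tilde a_n}{u})$, which is exactly the characteristic-function form of the defining relation \eqref{eq:18051704} for $A_n=(n^{-B})^{*}$ and an induced shift. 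Infinite divisibility is used here only to make the fractional power $\ft{\mu}^{s}$ meaningful for non-integer $s$.

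For the converse I start from a full operator-stable $\mu$ and first derive infinite divisibility. Relation \eqref{eq:18051704}, with $S_n=X_1+\cdots+X_n$, exhibits $\mu$ as the law of the row-sum $\sum_{i=1}^{n}A_nX_i=A_nS_n$ of an independent triangular array, shifted by $a_n$; hence $\mu=(\mathcal{L}(A_nX_1))^{*n}*\varepsilon_{a_n}$ for every $n$. Once the array is shown to be infinitesimal, the classical characterisation of infinitely-divisible laws as limits of row-sums of null arrays yields $\mu\sim[\gamma,Q,\nu]$. Infinitesimality reduces to $\norm{A_n}\to 0$, which I would obtain from the convergence-of-types theorem for full measures: if $\norm{A_n}$ stayed bounded away from $0$ along a subsequence, the increasing spread of $A_nS_n$ would be incompatible with the fixed full law $\mu$ standing on the right-hand side of \eqref{eq:18051704}.

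The heart of the matter, and the step I expect to be the \emph{main obstacle}, is upgrading the discrete normalisations $(A_n)$ to a continuous one-parameter family $s^{B}$. Here I would invoke operator regular variation: writing \eqref{eq:18051704} for the indices $n$ and $\gauss{sn}$ simultaneously and applying convergence of types to the ratios $A_{\gauss{sn}}A_n^{-1}$, one shows that these converge as $n\to\infty$ to a limit $R(s)$ which is continuous and multiplicative, $R(st)=R(s)R(t)$, with $R(s)\to 0$ as $s\to\infty$. The structure theorem for such one-parameter groups then forces $R(s)=s^{-B}$ for some $B$, unique only up to the symmetry group below, and $R(s)\to 0$ combined with the estimates \eqref{eq:17051701} gives $\lambda_B>0$, i.e. $B\in Q(\R^m)$. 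One finally passes from the discrete stability to \eqref{eq:18051706}, first for $s\in\Q$ with $s>0$ and then for all $s>0$ by continuity of $\ft{\mu}$. The delicate inputs are the tightness controlling the $A_n$ and the verification that the limiting multiplicative family is a genuine matrix exponential rather than merely a measurable one.

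For the two refinements I would argue through the symmetry group and the Lévy triple. The admissible exponents form the coset $B+\mathcal{T}$, where $\mathcal{T}$ is the Lie algebra of the affine symmetry group $\{T\in\GL(\R^m):T\mu=\mu*\varepsilon_b\ \text{for some}\ b\in\R^m\}$; for full $\mu$ this group is compact, so its Lie algebra consists of operators with purely imaginary spectrum, which leaves $\text{spec}(B)$ invariant while permitting $B$ itself to vary, giving uniqueness of the spectrum but not of the exponent. For the bound on $\lambda_B$, inserting the Lévy–Khintchine representation into \eqref{eq:18051706} and comparing triples forces the Gaussian part to scale as $s^{B}Q\,s^{B^{*}}=sQ$ and the Lévy measure to be $B$-homogeneous, $\nu(s^{B}\cdot)=s^{-1}\nu(\cdot)$. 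A nonzero $Q$ then makes $s^{B-\frac12 I_m}$ a bounded one-parameter group on the range of $Q$, forcing an eigenvalue of $B$ with real part exactly $\tfrac12$, hence $\lambda_B=\tfrac12$; if $Q=0$, combining the homogeneity of $\nu$ with the integrability $\int_{\R^m}\min\{1,\norm{x}^2\}\,\nu(dx)<\infty$ and the polar-coordinate bounds \eqref{eq:18051701}--\eqref{eq:18051702} forces $\lambda_B>\tfrac12$. Together these yield $\lambda_B\ge\tfrac12$ with the stated dichotomy.
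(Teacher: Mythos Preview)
The paper does not actually supply a proof of this theorem: it is stated with the preamble ``which is due to Theorem~7.2.1 in \cite{thebook}'' and is simply quoted as a known structural result from Meerschaert--Scheffler. There is therefore no ``paper's own proof'' to compare against; what you have produced is a sketch of the classical argument that underlies the cited reference.

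Viewed on its own merits, your outline is essentially the standard route taken in \cite{thebook} and the earlier Sharpe/Hudson--Mason literature: infinite divisibility via a null triangular array, extraction of a multiplicative one-parameter normalising family by convergence of types, identification of that family with $s\mapsto s^{-B}$, and the spectral statements via the symmetry group and the scaling of the L\'evy triple. Two places deserve a bit more care if you were to write this out fully. First, your infinitesimality step (``the increasing spread of $A_nS_n$ would be incompatible with the fixed full law'') is heuristic; the clean argument is that $A_nX_1\Rightarrow\varepsilon_0$ because $A_nS_n$ has a fixed full limit while each summand is one of $n$ i.i.d.\ contributions, and fullness of $\mu$ then forces $\norm{A_n}\to 0$. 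Second, the passage from the discrete relation at integers to the continuous relation \eqref{eq:18051706} is a little more delicate than ``first rational, then by continuity'': one typically embeds the problem in the decomposability semigroup of $\mu$ and uses that, for full $\mu$, this semigroup is closed and the associated norming operators form a continuous one-parameter group; measurability of $s\mapsto R(s)$ together with multiplicativity already forces the exponential form. None of this is a gap in strategy, only in detail, and it matches what the cited source does.
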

Observe that our definition of operator-stability coincides with that in \cite{thebook}, if the considered distribution is full. Also note that Definition 3.3.24 in \cite{thebook} accentuates the theoretical importance of operator-stable laws by means of the term \textit{domain of attraction}. 
Overall Lemma 2.2 in \cite{kms} can help to observe that a full operator-stable distribution is strictly operator-stable if and only if we can choose $a_s=0$ in \eqref{eq:18051706}. For our purpose strictly operator-stable distributions will be most important, hence it is mentionable that,  for $\mu$ as in \cref{18051710}, we can always find a $x_0 \in \R^m$ such that $\mu * \eps_{x_0}$ (convolution with the Dirac measure in $x_0$) is strictly operator-stable, at least if $1 \notin \text{spec}(B)$ (see Corollary 4.9.3 in \cite{JuMa93}). The following properties are easy conclusions of each other or can be checked by the previous statements. 
\begin{cor} \label{24051703}
Let $\mu$ be an operator-stable distribution on $\R^m$ which is neither full nor strict a-priori.
\begin{itemize}
\item[(i)] Assume that \eqref{eq:18051706} holds, where $\mu \sim [\gamma,Q,\nu]$ with log-characteristic function $\psi$. Then we have $s \cdot Q=s^{B} Q s^{B^{*}}$ and $s \cdot \nu=(s^B \nu)$ for every $s>0$. Here ${}^{*}$ denotes the adjoint operator and $(s^B \nu)(A)=\nu(s^{-B} A)$ denotes the image measure. Furthermore, the shifts are given by
\begin{equation*}
a_s=(s^{I_m}-s^{B}) \gamma - \integral{\R^m}{}{\left(  \frac{s^B x}{1+\norm{s^Bx}^2} - \frac{s^B x}{1+\norm{x}^2} \right) }{\nu(dx)}.
\end{equation*}
Finally the following relation holds for every $s>0$ and $u \in \R^m$:
\begin{equation} \label{eq:18051711}
s \cdot \psi(u)=\psi(s^{B^{*}}u) + \im \skp{a_s}{u}.
\end{equation}
\item[(ii)] If $\mu$ is symmetric, then it is even strictly operator-stable. The converse is false.
\end{itemize}
\end{cor}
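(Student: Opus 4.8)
The plan is to reduce both parts to the additive functional equation \eqref{eq:18051711} for the log-characteristic function $\psi$ and then to exploit the uniqueness of the L\'evy--Khintchine triplet $[\gamma,Q,\nu]$ of $\mu$.

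For part (i) I would first pass from \eqref{eq:18051706} to \eqref{eq:18051711}: since $\ft{\mu}=\exp(\psi)$ and $\psi$ is the unique continuous function with $\psi(0)=0$ and $\ft{\mu}=\exp(\psi)$, taking the continuous logarithm in \eqref{eq:18051706} turns the product into the sum and yields the additive relation \eqref{eq:18051711}, the linear term $\im\skp{a_s}{u}$ absorbing the shift. The key observation is that each side of \eqref{eq:18051711} is itself a log-characteristic function whose triplet can be read off. The left-hand side $s\,\psi$ belongs to the infinitely-divisible law with triplet $[s\gamma,\,sQ,\,s\nu]$ (scalar multiples, by linearity of the L\'evy--Khintchine form). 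For the right-hand side I would invoke the standard transformation rule for the image of an infinitely-divisible law under a linear map: $u\mapsto\psi(s^{B^{*}}u)=\log\ft{s^{B}\mu}(u)$ is the log-characteristic function of the push-forward $s^{B}\mu$, whose triplet is $\bigl[s^{B}\gamma+c_s,\;s^{B}Qs^{B^{*}},\;s^{B}\nu\bigr]$, where $s^{B}\nu$ is the image measure from the statement and
\[
c_s=\int_{\R^m}\Bigl(\tfrac{s^{B}x}{1+\norm{s^{B}x}^2}-\tfrac{s^{B}x}{1+\norm{x}^2}\Bigr)\,\nu(dx)
\]
is the correction produced by the (non-equivariant) truncation $x\mapsto 1/(1+\norm{x}^2)$. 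Adding $\im\skp{a_s}{u}$ merely shifts the drift by $a_s$.

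Comparing the two triplets via their uniqueness then yields all assertions simultaneously: the purely quadratic parts give $s\,Q=s^{B}Qs^{B^{*}}$, the L\'evy measures give $s\,\nu=s^{B}\nu$, and the drifts give $s\gamma=s^{B}\gamma+c_s+a_s$, i.e. $a_s=(s^{I_m}-s^{B})\gamma-c_s$, which is precisely the stated formula. The step I expect to be the main obstacle is making $c_s$ rigorous, i.e. controlling the truncation mismatch: after the adjoint identity $\skp{s^{B^{*}}u}{x}=\skp{u}{s^{B}x}$ and the substitution $y=s^{B}x$, the canonical compensator $\skp{u}{y}/(1+\norm{y}^2)$ of $s^{B}\nu$ is replaced by $\skp{u}{s^{B}x}/(1+\norm{x}^2)$, and I must show the difference defines a genuine vector $c_s$. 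Here I would use the spectral estimates \eqref{eq:17051701}: near the origin the integrand is $O(\norm{x}^3)$ (the quadratic form $\skp{(I_m-s^{B^{*}}s^{B})x}{x}$ times the factor $s^{B}x$), hence integrable against $\nu$ because $\int\min\{1,\norm{x}^2\}\,\nu(dx)<\infty$; away from the origin both fractions are bounded and $\nu$ is finite there.

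For part (ii) I would first record that symmetry of $\mu$ means $\mu=\mu^{-}$, and since the reflection $\mu^{-}$ has triplet $[-\gamma,Q,\nu^{-}]$ (with $\nu^{-}(A)=\nu(-A)$), uniqueness forces $\gamma=0$ and $\nu=\nu^{-}$, so $\nu$ is symmetric. Substituting this into the formula for $a_s$ from part (i): the first summand vanishes since $\gamma=0$, while the integrand $x\mapsto\tfrac{s^{B}x}{1+\norm{s^{B}x}^2}-\tfrac{s^{B}x}{1+\norm{x}^2}$ is odd (because $s^{B}(-x)=-s^{B}x$ and the norms are even), so it integrates to zero against the symmetric measure $\nu$. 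Hence $a_s=0$ for every $s>0$; taking $s=n\in\N$ in \eqref{eq:18051706} then realizes the normalization \eqref{eq:18051704} with vanishing shifts $a_n=0$, which is by definition strict operator-stability (cf. Lemma 2.2 in \cite{kms}). Finally, the converse fails already in dimension $m=1$: a totally skewed strictly $\tfrac12$-stable law on $\R$, e.g. the one-sided L\'evy distribution supported on $(0,\infty)$ with exponent $B=2$, is strictly operator-stable but manifestly not symmetric.
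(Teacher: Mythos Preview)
Your proof is correct. The paper gives no explicit proof for this corollary---it only remarks beforehand that the statements are ``easy conclusions of each other or can be checked by the previous statements''---and your argument (passing to log-characteristic functions via the unique continuous logarithm, computing the triplet of the push-forward $s^{B}\mu$ with the truncation correction $c_s$, comparing via uniqueness of the L\'evy--Khintchine representation, and handling (ii) by oddness of the integrand together with a one-sided $\tfrac12$-stable counterexample) is precisely the standard way to make those conclusions rigorous.
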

Roughly speaking, the \textit{$\alpha$-stable} distributions are those operator-stable ones which only allow operators of the form $A_n= c_n I_m$ in \eqref{eq:18051704} and therefore $B=c I_m$ in \eqref{eq:18051706}, where $c,c_1,...$ are positive numbers. Writing $c=\frac{1}{\alpha}$, we call $\alpha$ the stability-index and according to \cref{18051710} it follows that $\alpha \in (0,2]$. See \cite{SaTaq94} for a comprehensive overview about stable distributions.  \\
Now we can apply the results in \cite{integral}: Assume $\mu$ subsequently to be a full infinitely-divisible distribution on $\R^m$ with triplet $[\gamma',Q',\nu']$ and log-characteristic function $\psi$, where $(S, \Sigma, \Theta)$ is any non-trivial, $\sigma$-finite measure space. Then we consider the $\R^m$-valued independently scattered random measure $M$ on the $\delta$-ring $\mathcal{S}:=\{A \in \Sigma: \Theta(A)< \infty\}$ (and some suitable probability space $(\Omega,\mathcal{A},\Pro)$) which is generated by $\mu$ and $\Theta$ in the sense of Example 3.7 (a) in \cite{integral}. This means that we have $M(A) \sim [\Theta(A) \gamma' , \Theta(A) Q', \Theta(A) \nu']$ for every $A \in \mathcal{S}$. Concerning the question, whether a given mapping $f:S \rightarrow$ L$(\R^m)$ is integrable with respect to $M$, i.e. belongs to $\mathcal{I}(M)$, we can first focus on the real-valued perspective due to Proposition 5.10 in \cite{integral}.
\begin{theorem} \label{24051701}
For $f:S \rightarrow \Li{m}$ measurable the following statements are equivalent:
\begin{itemize}
\item[(i)] $f \in \mathcal{I}(M)$.
\item[(ii)] The below-mentioned integrals exist:
\begin{align*}
\gamma_f: &= \integral{S}{}{\left(f(s) \gamma' + \integral{\R^m}{}{\left( \frac{f(s)x}{1+\norm{f(s)x}^2} - \frac{f(s)x}{1+\norm{x}^2} \right)}{\nu'(dx)} \right)}{\Theta(ds)}, \\
Q_f:&=\integral{S}{}{f(s)Q' f(s)^{*}}{\Theta(ds)}, \\
L_f:&=\integral{S}{}{\integral{\R^m}{}{\min \{1,\norm{f(s)x}^2\}}{\nu'(dx)} }{\Theta(ds)}.
\end{align*}
\item[(iii)] The integral $\int_S \psi(f(s)^{*}u) \, \Theta(ds)$ exists for every $u \in \R^m$ and the mapping
\begin{equation*}
\R^m \ni u \mapsto \integral{S}{}{\integral{\R^m}{}{(\cos \skp{f(s)^{*}u}{x}-1)}{\nu'(dx)}}{\Theta(ds)}
\end{equation*}
is continuous.
\end{itemize}
\end{theorem}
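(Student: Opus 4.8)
\textit{The plan is to} recognise that the three objects in (ii) are exactly the candidate L\'evy triplet of the $\R^m$-valued integral $\int_S f\,dM$, while (iii) asserts that the candidate log-characteristic function of that integral is well defined and genuinely infinitely-divisible. I would prove (i)$\Leftrightarrow$(iii) by reducing to the real-valued integration theory through Proposition 5.10 in \cite{integral}, and prove (ii)$\Leftrightarrow$(iii) by inserting the L\'evy--Khintchine form of $\psi$ and separating real and imaginary parts.

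For (i)$\Leftrightarrow$(iii) the starting observation is that, since $M(A)\sim[\Theta(A)\gamma',\Theta(A)Q',\Theta(A)\nu']$, the vector $f(s)M(ds)$ has local log-characteristic function $u\mapsto\psi(f(s)^{*}u)$: for $Y\sim\mu$ one has $\erwartung{\exp(\im\skp{u}{f(s)Y})}=\exp(\psi(f(s)^{*}u))$ via $\skp{u}{f(s)Y}=\skp{f(s)^{*}u}{Y}$. Evaluating on simple integrands and passing to the limit that defines $\mathcal{I}(M)$, Proposition 5.10 in \cite{integral} shows that $f\in\mathcal{I}(M)$ precisely when $\int_S\psi(f(s)^{*}u)\,\Theta(ds)$ exists for every $u$ and the resulting function of $u$ is a log-characteristic function, i.e.\ is continuous. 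Since the Gaussian and shift contributions are automatically continuous once the integral converges, this reduces to the continuity of the integrated cosine term singled out in (iii), so (i) and (iii) coincide.

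For (ii)$\Leftrightarrow$(iii) I would substitute the L\'evy--Khintchine representation of $\psi$ and use $\skp{f(s)^{*}u}{x}=\skp{u}{f(s)x}$ to obtain, for fixed $s$,
\begin{align*}
\real{\psi(f(s)^{*}u)}&=-\tfrac12\skp{u}{f(s)Q'f(s)^{*}u}+\integral{\R^m}{}{(\cos\skp{u}{f(s)x}-1)}{\nu'(dx)},\\
\ima{\psi(f(s)^{*}u)}&=\skp{u}{f(s)\gamma'}+\integral{\R^m}{}{\Big(\sin\skp{u}{f(s)x}-\tfrac{\skp{u}{f(s)x}}{1+\norm{x}^2}\Big)}{\nu'(dx)}.
\end{align*}
Both summands of the real part are non-positive, so existence of $\int_S\real{\psi(f(s)^{*}u)}\,\Theta(ds)$ for every $u$ forces each to be $\Theta$-integrable: the Gaussian term yields $Q_f$, and the cosine term is tied to $L_f$ by the elementary two-sided comparison $c_1\min\{1,\norm{y}^2\}\le\int_{\norm{u}\le1}(1-\cos\skp{u}{y})\,du\le c_2\min\{1,\norm{y}^2\}$, which after Fubini identifies finiteness of the integrated cosine term (for all $u$) with $L_f<\infty$. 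Once $Q_f$ and $L_f$ are finite, existence of $\int_S\ima{\psi(f(s)^{*}u)}\,\Theta(ds)$ is equivalent to existence of $\gamma_f$: the non-standard truncation is absorbed by rewriting $\frac{f(s)x}{1+\norm{x}^2}=\frac{f(s)x}{1+\norm{f(s)x}^2}-\big(\frac{f(s)x}{1+\norm{f(s)x}^2}-\frac{f(s)x}{1+\norm{x}^2}\big)$, the bracketed correction being exactly the inner integral appearing in $\gamma_f$. Continuity of the integrated cosine term then follows from $L_f<\infty$ by dominated convergence.

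The main obstacle is this real-part analysis: the natural truncation $\min\{1,\norm{f(s)x}^2\}$ defining $L_f$ never appears directly, only through $\int(1-\cos\skp{u}{f(s)x})\,\nu'(dx)$, so I must control this quantity uniformly in $s$ and recover $L_f$ by averaging over $u$ in the unit ball. Handling the truncation mismatch in the shift term in a way that is jointly integrable in $(s,x)$ is the accompanying technical point; both become routine once the elementary cosine estimate is in place, but they carry the genuine content of the equivalence.
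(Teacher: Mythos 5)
Your proposal is correct in substance but follows a genuinely different route from the paper. The paper's entire proof is a citation: the statement is obtained by combining Example 3.7(a) of \cite{integral} (which identifies the local characteristics of $M$ as $[\Theta(A)\gamma',\Theta(A)Q',\Theta(A)\nu']$) with Theorem 5.8 of \cite{integral}, the multivariate Rajput--Rosi\'nski-type integrability criterion; no computation is carried out there. You instead re-derive the equivalence (ii)$\Leftrightarrow$(iii) by hand --- inserting the L\'evy--Khintchine form of $\psi$, splitting real and imaginary parts, recovering $Q_f$ by polarization and positivity, $L_f$ by averaging the cosine term over the unit ball, and $\gamma_f$ via the change-of-truncation identity, whose correction term is dominated by $C_u\min\{1,\norm{f(s)x}^2\}$ and hence absorbed once $L_f<\infty$ --- and you defer only the equivalence with actual integrability, (i)$\Leftrightarrow$(iii), to the general theory. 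That deferral is unavoidable at this level (the definition of $\mathcal{I}(M)$ via limits of integrals of simple functions requires the tightness and convergence-of-triplets machinery of \cite{integral}), but note that the result you attribute to Proposition 5.10 of \cite{integral} is really Theorem 5.8 there: Proposition 5.10 concerns the passage between complex-valued integrands and their real associated counterparts and cannot deliver the criterion you need. So for (i)$\Leftrightarrow$(iii) your proof is, like the paper's, a citation --- just mislabelled. What your route buys is visibility of the mechanism behind the triplet conditions; what the paper's buys is brevity and full rigor by reference.

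One step needs patching. In the direction (iii)$\Rightarrow$(ii) you claim that finiteness, for every $u$, of $G(u):=\int_S\int_{\R^m}(1-\cos\skp{f(s)^{*}u}{x})\,\nu'(dx)\,\Theta(ds)$ yields $L_f<\infty$ ``after Fubini''. Tonelli indeed gives $c_1 L_f\le\int_{\norm{u}\le 1}G(u)\,du$, but pointwise finiteness of $G$ does not by itself make the right-hand side finite. You must either invoke the continuity of the integrated cosine term --- which is precisely the second hypothesis in (iii), giving boundedness of $G$ on the unit ball --- or run a Steinhaus-type argument based on the subadditivity $G(u+v)\le 2\bigl(G(u)+G(v)\bigr)$. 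Since the hypothesis is available, the fix is one line; but as written the step is incomplete, and it is exactly the reason the continuity requirement appears in (iii) at all rather than mere pointwise existence.
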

\begin{proof}
Combine Example 3.7 (a) and Theorem 5.8 in \cite{integral}.
\end{proof}
In order to estimate the integrals in \cref{24051701} for an operator-stable $\mu \sim [\gamma',Q',\nu']$ with exponent $B$ the following desintegration formula for the L\'{e}vy measure $\nu'$ will be useful. It is shown in Theorem 7.2.5 in \cite{thebook} that there exists a finite measure $\sigma$ on $\B(S_B)$, i.e. the collection of all Borel sets $A \subset S_B$, such that for any measurable function $g:\R^m \rightarrow [0,\infty)$ we have
\begin{equation} \label{eq:23051701}
\integral{\R^m}{}{g(x)}{\nu'(dx)}=\integral{S_B}{}{\integral{0}{\infty}{g(r^B \zeta) r^{-2}}{dr} }{\sigma(d \zeta)}.
\end{equation}
\begin{example} \label{30051702}
Let $f:S \rightarrow \Li{m}$ be measurable and assume that $\mu$ is a full symmetric $\alpha$-stable ($S \alpha S$) measure with corresponding spectral measure $\sigma$ on $S_{\alpha}:=S_{\alpha^{-1}I_m}$.
\begin{itemize}
\item[(a)] In case $\alpha=2$ we have $f \in \mathcal{I}(M)$ if and only if $\int_S \norm{f(s)}^2 \, \Theta(ds)< \infty$.
\item[(b)] In case $\alpha<2$ we have $f \in \mathcal{I}(M)$ if and only if 
\begin{equation} \label{eq:23051705}
\integral{S}{}{\integral{S_{\alpha}}{}{\norm{f(s)\zeta}^{\alpha} }{\sigma(d\zeta)} }{\Theta(ds)}< \infty.
\end{equation}
Hence $\int_S \norm{f(s)}^{\alpha} \, \Theta(ds)<\infty$ is s sufficient condition for $f \in \mathcal{I}(M)$ again and even necessary, if $\psi(\cdot)=-c \norm{\cdot}^{\alpha}$ for some $c>0$. 
\end{itemize}
\end{example}
\begin{proof} 
The symmetry of $\mu$ and \cref{18051710} imply that we have either $\mu \sim [0,Q',0]$ or $\mu \sim [0,0,\nu']$ with $\nu'$ being symmetric. In any case $\gamma_f=0$ exists. Concerning part (a) and according to \cref{24051701} (ii) this means that $f \in \mathcal{I}(M)$ if and only if $Q_f$ exists. Hence, since $\norm{f(s)^{*}}=\norm{f(s)}$, we see that $\int_S \norm{f(s)}^2 \, \Theta(ds)< \infty$ is a sufficient condition. Conversely, we assume the existence of $Q_f$ and see by fullness of $\mu$ that $Q'$ even has to be positive definite (and still symmetric). Then we first consider $Q'=\text{diag}(q_1',...,q_m')$ with $q_i'>0$ and suppose that $\int_S \norm{f(s)}^2 \, \Theta(ds)=\infty$. It then follows that there has to be a component $f_{i_0,j_0}$ of $f$ with $\int_S f_{i_0,j_0}(s)^2 \, \Theta(ds)=\infty$. At the same time we know that 
\begin{equation*}
0 \le \skp{Q_f u}{u}=\int_S \skp{Q' f(s)^{*}u}{f(s)^{*}u} \, \Theta(ds)< \infty,
\end{equation*}
which yields the contradiction for $u=e_{i_0}$ (i.e. the $i_0$-th unit vector), since
\begin{equation*}
\skp{Q' f(s)^{*}u}{f(s)^{*}u} = \summe{i=1}{m} q_i' f_{i_0,i}(s)^2 \ge q_{j_0}' f_{i_0,j_0}(s)^2
\end{equation*}
for every $s \in S$. In the general case we can find a diagonal matrix $D$ as well as some orthogonal matrix $U$ such that $Q'=UDU^{*}$. In view of $\skp{Q' f(s)^{*}u}{f(s)^{*}u}=\skp{Dg(s)^{*}u}{g(s)^{*}u}$ for $g(s):=f(s)U$ we can argue as before that $\int_S \norm{g(s)}^{2} \, \Theta(ds)<\infty$. As the following inequality holds for suitable constants $C_1,C_2>0$, where $\norm{\cdot}_F$ denotes the Frobenius norm, this gives the assertion.
\begin{equation*}
\norm{g(s)}=\norm{g(s)^{*}} \ge C_1 \norm{U^{*} f(s)^{*}}_F= C_1 \norm{f(s)^{*}}_F \ge C_2 \norm{f(s)}, \quad s \in S.
\end{equation*}
For the case $\alpha<2$ it is clear that $f \in \mathcal{I}(M)$ is equivalent to the finiteness of $L_f$, admitting the following representation due to \eqref{eq:23051701}.
\begin{align*}
& \integral{S}{}{\integral{\R^m}{}{\min \{1,\norm{f(s)x}^2\} }{\nu'(ds)}  }{\Theta(ds)} \\
&= \integral{S}{}{\integral{S_{\alpha}}{}{\integral{0}{\infty}{\min\{1,r^{\frac{2}{\alpha}} \norm{f(s) \zeta }^2  \}} r^{-2} }{\sigma(d \zeta )}}{\Theta(ds)} \\
&= \integral{S}{}{\integral{S_{\alpha}}{}{\indikator{(0,\infty)}{\norm{f(s) \zeta }} \left( \integral{0}{\norm{f(s) \zeta }^{-\alpha}}{r^{\frac{2}{\alpha}-2}\norm{f(s) \zeta }^2}{dr}  + \integral{\norm{f(s)\zeta }^{-\alpha}}{\infty }{r^{-2} }{dr} \right) }{\sigma(d\zeta )} }{\Theta(ds)} \\
&= \frac{2}{2-\alpha} \integral{S}{}{\integral{S_{\alpha}}{}{\norm{f(s)\zeta }^{\alpha} }{\sigma(d\zeta )} }{\Theta(ds)}.
\end{align*}
This implies (b). The case $\psi(\cdot)=-c \norm{\cdot}^{\alpha}$ can be treated similar as in part (a) since $f \in \mathcal{I}(M)$ implies that $s \mapsto \norm{f(s)^{*}u}^{\alpha}$ is integrable with respect to $\Theta$ for every $u \in \R^m$, see \cref{24051701} (iii).
\end{proof}
As announced before, we subsequently want to formulate sufficient conditions for $f \in \mathcal{I}(M)$, given that the generator $\mu$ is strictly operator-stable (and still full). In light of the previous example, which we therefore extend in two ways (see \cref{24051703} (ii)), these conditions will be sharp in general. 
\begin{theorem} \label{24061707}
Let $f:S \rightarrow \Li{m}$ be measurable and assume that $\mu$ is strictly operator-stable with exponent $B$. Furthermore, we suppose that there are $0< \delta_1 \le \Lambda_B^{-1}$ and $\delta_2>0$ such that 
\begin{equation*}
\integral{\{s:\norm{f(s)} \le R\} }{}{\norm{f(s)}^{\frac{1}{\Lambda_B}-\delta_1} }{\Theta(ds)} + \integral{\{s:\norm{f(s)} > R\} }{}{\norm{f(s)}^{\frac{1}{\lambda_B}+\delta_2} }{\Theta(ds)}< \infty
\end{equation*}
holds for some $R>0$. Then it follows that $f \in \mathcal{I}(M)$. Additionally, if $B$ is symmetric, we can choose $\delta_1=0$ and $\delta_2=0$, respectively.
\end{theorem}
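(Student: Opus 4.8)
The plan is to verify criterion (iii) of \cref{24051701}, the decisive observation being that strict operator-stability renders the log-characteristic function $\psi$ homogeneous with respect to $B^{*}$. Since $\mu$ is strictly operator-stable with exponent $B$ we may take $a_s=0$, so that \eqref{eq:18051711} reduces to $s\,\psi(u)=\psi(s^{B^{*}}u)$ for all $s>0$, $u\in\R^m$. Writing an arbitrary $v\in\Gamma_m$ in polar coordinates with respect to $B^{*}$ (legitimate since $\lambda_{B^{*}}=\lambda_B>0$, hence $B^{*}\in Q(\R^m)$) as $v=\tau_{B^{*}}(v)^{B^{*}}\theta$ with $\theta\in S_{B^{*}}$, and applying this relation with $s=\tau_{B^{*}}(v)$, I obtain $\psi(v)=\tau_{B^{*}}(v)\,\psi(\theta)$. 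As $\psi$ is continuous and $S_{B^{*}}$ is compact, $M:=\sup_{\theta\in S_{B^{*}}}|\psi(\theta)|<\infty$ and therefore $|\psi(v)|\le M\,\tau_{B^{*}}(v)$ for every $v$. The same computation, now using the scaling $s\cdot\nu'=s^{B}\nu'$ from \cref{24051703}(i), shows that $\Phi(v):=\int_{\R^m}(\cos\skp{v}{x}-1)\,\nu'(dx)$ satisfies $\Phi(s^{B^{*}}v)=s\,\Phi(v)$, whence $|\Phi(v)|\le M'\,\tau_{B^{*}}(v)$ for a constant $M'$. (Alternatively one could verify criterion (ii) by estimating $L_f$ through the disintegration \eqref{eq:23051701} as in \cref{30051702} and treating $Q_f,\gamma_f$ likewise, but the homogeneity route controls all three integrals at once.)

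Next I would convert these $\tau$-bounds into Euclidean-norm bounds. As $\lambda_{B^{*}}=\lambda_B$ and $\Lambda_{B^{*}}=\Lambda_B$, the estimates \eqref{eq:18051701}--\eqref{eq:18051702} applied to $B^{*}$ give, upon taking the free parameter $\delta$ equal to $\delta_1$ and to $\delta_2$,
\begin{equation*}
\tau_{B^{*}}(w)\le C\,\norm{w}^{1/\Lambda_B-\delta_1}\ (\tau_{B^{*}}(w)\le s_0),\qquad \tau_{B^{*}}(w)\le C\,\norm{w}^{1/\lambda_B+\delta_2}\ (\tau_{B^{*}}(w)\ge s_0).
\end{equation*}
Setting $w=f(s)^{*}u$, using $\norm{f(s)^{*}u}\le\norm{f(s)}\,\norm{u}$ together with $1/\Lambda_B-\delta_1\ge0$ (this is exactly where the hypothesis $\delta_1\le\Lambda_B^{-1}$ is needed), and splitting $S=S_1\cup S_2$ according to whether $\tau_{B^{*}}(f(s)^{*}u)\le s_0$ or $>s_0$, I arrive at
\begin{equation*}
\int_S|\psi(f(s)^{*}u)|\,\Theta(ds)\le C(u)\Big(\int_{S_1}\norm{f(s)}^{1/\Lambda_B-\delta_1}\,\Theta(ds)+\int_{S_2}\norm{f(s)}^{1/\lambda_B+\delta_2}\,\Theta(ds)\Big),
\end{equation*}
where $C(u)$ stays bounded while $u$ ranges over a bounded set.

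The heart of the matter is to dominate the right-hand side by the two integrals in the hypothesis, and this is the step I expect to require the most care. The difficulty is that the level sets of $\tau_{B^{*}}(f(s)^{*}u)$, which define $S_1,S_2$, need not coincide with $\{\norm{f}\le R\}$ and $\{\norm{f}>R\}$, since $\norm{f(s)^{*}u}$ is not bounded below by $\norm{f(s)}$ when $u$ is nearly orthogonal to the range of $f(s)$. I would circumvent this by taking $R=1$ (permissible because finiteness of the two hypothesis integrals is insensitive to the threshold: changing it only alters a region on which $\norm{f}$ is bounded away from $0$ and $\infty$, carrying finite $\Theta$-mass) and by the elementary comparison $1/\Lambda_B-\delta_1\le 1/\lambda_B+\delta_2$: on $\{\norm{f}>1\}$ one has $\norm{f}^{1/\Lambda_B-\delta_1}\le\norm{f}^{1/\lambda_B+\delta_2}$ and on $\{\norm{f}\le1\}$ the reverse, so each of the integrals over $S_1,S_2$ is dominated by the sum of the two hypothesis integrals. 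This proves that $\int_S\psi(f(s)^{*}u)\,\Theta(ds)$ exists for every $u$. The continuity of $u\mapsto\int_S\Phi(f(s)^{*}u)\,\Theta(ds)$ demanded in \cref{24051701}(iii) then follows from dominated convergence, because on any ball $\norm{u}\le K$ the bound $|\Phi(f(s)^{*}u)|\le M'\tau_{B^{*}}(f(s)^{*}u)$ together with the above furnishes one $\Theta$-integrable majorant. Hence $f\in\mathcal{I}(M)$.

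For the final assertion, if $B$ is symmetric then $B^{*}=B$ and the operator norm of the symmetric exponential is exactly $\norm{s^{B}}=s^{\Lambda_B}$ for $s\ge1$ and $\norm{s^{B}}=s^{\lambda_B}$ for $s\le1$; diagonalising shows that the two-sided bounds underlying \eqref{eq:18051701}--\eqref{eq:18051702} then hold with $\delta=0$ and explicit constants. Repeating the argument with these sharpened estimates permits the choices $\delta_1=\delta_2=0$, which is the claimed improvement.
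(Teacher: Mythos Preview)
Your proof is correct and follows essentially the same route as the paper's: verify criterion (iii) of \cref{24051701} by exploiting the strict operator-stability relation $|\psi(v)|\le M\,\tau_{B^{*}}(v)$, convert $\tau_{B^{*}}$ to Euclidean norms via \eqref{eq:18051701}--\eqref{eq:18051702}, and treat the symmetric case by diagonalisation. The only organisational difference lies in how you reconcile the $u$-dependent level sets of $\tau_{B^{*}}(f(s)^{*}u)$ with the $u$-independent sets $\{\norm{f}\lessgtr R\}$: the paper keeps the given $R$ and performs a three-region decomposition (splitting by both $\norm{f(s)}$ and $\norm{f(s)^{*}u}$), whereas you first reduce to $R=1$ and then use the monotonicity $\norm{f}^{1/\Lambda_B-\delta_1}\lessgtr\norm{f}^{1/\lambda_B+\delta_2}$ on $\{\norm{f}\lessgtr1\}$ to dominate each piece by the two hypothesis integrals. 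Both devices achieve the same end; yours is arguably tidier, at the cost of the preliminary threshold adjustment (which you justify correctly, since the symmetric-difference region has finite $\Theta$-mass by the hypothesis itself).
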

\begin{proof}
Under the given assumptions we fix $u \in \R^m$ and define the sets
\begin{align*}
A_0:&=\{s: \norm{f(s)} \le R \text{ and } f(s)^{*}u \ne 0\}, \\
A_1:&=\{s: \norm{f(s)} > R \text{ and } 0<\norm{f(s)^{*}u} \le1\}, \\
A_2:&=\{s: \norm{f(s)}> R \text{ and } \norm{f(s)^{*}u} > 1 \}.
\end{align*}
Let $(\tau(\cdot),l(\cdot))$ denote the polar coordinates w.r.t $B^{*}$. Since $S_{B^{*}}$ is compact we can assume for convenience that $|\psi(\cdot)|\le 1$ on $S_{B^{*}}$. Hence the following calculation works for every $s \in S$ due to $\psi(0)=0$, \eqref{eq:18051711} and \eqref{eq:18051701}-\eqref{eq:18051702}:
\allowdisplaybreaks
\begin{align*}
&|\psi(f(s)^{*}u)| \\
&= |\psi(\tau(f(s)^{*}u)^{B^{*}} l(f(s)^{*}u) )| (\indikator{A_0}{s} +\indikator{A_1}{s} +\indikator{A_2}{s} ) \\
& \le C_0 \norm{f(s)^{*}u}^{1/ \Lambda_{B^{*}}- \delta_1} \indikator{A_0}{s} +C_1 \norm{f(s)^{*}u}^{1/ \Lambda_{B^{*}}- \delta_1} \indikator{A_1}{s}+C_2 \norm{f(s)^{*}u}^{1/ \lambda_{B^{*}}+ \delta_2} \indikator{A_2}{s} \\
& \le C_0 (\norm{u} \, \norm{f(s)})^{1/ \Lambda_{B}- \delta_1} \indikator{A_0}{s} +C_1(R \norm{u})^{1/ \Lambda_B-\delta_1} (R^{-1} \norm{f(s)})^{1/ \lambda_{B}+ \delta_2} \indikator{A_1}{s} \\
& \qquad \qquad +C_2 (\norm{u} \, \norm{f(s)})^{1/ \lambda_{B}+ \delta_2} \indikator{A_2}{s} \\
& \le C_0 (\norm{u} \,  \norm{f(s)})^{1/ \Lambda_{B}- \delta_1} \indikator{\{s:\norm{f(s)}\le R \} }{s} \\
& \qquad \qquad + [C_1(R \norm{u})^{1/ \Lambda_B-\delta_1} R^{-(1/ \lambda_B+\delta_2)}+ C_2  \norm{u}^{1/ \lambda_B+\delta_2} ] \norm{f(s)}^{1/ \lambda_B+\delta_2} \indikator{\{s:\norm{f(s)} > R \} }{s}.
\end{align*}
Note that we also used $\norm{f(s)}=\norm{f(s)^{*}}$ as well as $\text{spec}(B)=\text{spec}(B^{*})$. Furthermore, the constants $C_0,C_1,C_2>0$ come from \eqref{eq:18051701}-\eqref{eq:18051702}, where only $C_0$ depends on $R$ and $u$. More precisely, $C_0$ can be chosen monotone in $R \norm{u}$ (see the proof of Lemma 2.1 in \cite{bms}). Overall the first condition of \cref{24051701} (iii) is fulfilled. For the second one consider an arbitrary sequence $(u_n)\subset \R^m$ with limit $u$ and assume that this sequence is bounded by some $K>0$. Then Lemma 8.6 in \cite{Sato} and the Cauchy-Schwarz inequality yield that
\begin{equation*}
|\cos \skp{f(s)^{*}u_n}{x}-1 | \le 2 \max \{1,\norm{f(s)}^2 K^2\} \min \{1,\norm{x}^2\}, \quad n \in \N.
\end{equation*}
Hence we get by dominated convergence that
\begin{equation} \label{eq:30051701}
\integral{\R^m}{}{(\cos \skp{f(s)^{*}u_n}{x} -1)}{\nu' (dx)} \konv{n}{\infty} \integral{\R^m}{}{(\cos \skp{f(s)^{*}u}{x} -1)}{\nu'(dx)}, \quad s \in S.
\end{equation}
Then the same argument gives the assertion as we have
\allowdisplaybreaks
\begin{align*}
\left |\, \integral{\R^m}{}{(\cos \skp{f(s)^{*}u_n}{x}-1)}{\nu'(dx)}  \right|  & \le \frac{1}{2} \skp{Q' f(s)^{*}u_n}{f(s)^{*}u_n} + \integral{\R^m}{}{(1-\cos \skp{f(s)^{*}u_n}{x})}{\nu'(dx)} 
\end{align*}
and since the last term is bounded by $|\psi(f(s)^{*}u_n)|$ which can be treated as before again. For the additional statement merely observe that we can find an orthogonal matrix $O$ as well as a diagonal matrix $A=\text{diag}(a_1,...,a_m)$ sucht that $\norm{s^B}=\norm{O s^{A} O^{-1}} \le \norm{O} \norm{O^{-1}} \norm{s^A}$ according to Proposition 2.2.2 (e) in \cite{thebook}, whereas the estimates in \eqref{eq:17051701} then obviously hold with $\delta=0$ because of $s^A=\text{diag}(s^{a_1},...,s^{a_m})$. This can be used to improve \eqref{eq:18051701}-\eqref{eq:18051702} accordingly.
\end{proof}
Obviously the previous theorem implies the sufficient conditions of \cref{30051702}. Finally we want to examine further properties of the stochastic integral when using an (strictly) operator-stable generator. However, part (a) of the following statement is similar to Remark 2.1 in \cite{LiXiao} and also holds for a general infinitely-divisible generator, as long as we maintain the fullness of $\mu$.
\begin{prop} \label{20061701} 
Let $\mu$ be as in the previous theorem. Then we have for $f \in \mathcal{I}(M)$:
\begin{itemize}
\item[(a)] If there exists a set $A \in \Sigma$ with $\Theta'(A)>0$ such that $f(s) \in$ GL$(\R^m)$ for all $s \in A$, then the stochastic integral $I_M(f)$ is full.
\item[(b)] If $f(s) B =B f(s)$ holds $\Theta$-a.e., then $I_M(f)$ is also (strictly) operator-stable with exponent $B$. More precisely, the shifts of $I_M(f)$ can be obtained by \eqref{eq:30051706} below.
\end{itemize}
\end{prop}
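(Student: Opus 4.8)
The plan is to carry out everything at the level of the log-characteristic function of the stochastic integral, which by \cref{24051701}(iii) and the construction of $M$ equals
\[
\Psi(u) = \integral{S}{}{\psi(f(s)^{*}u)}{\Theta(ds)}, \qquad u \in \R^m,
\]
together with the L\'evy--Khintchine triplet $[\gamma_f,Q_f,\nu_f]$ of $I_M(f)$, where $Q_f=\integral{S}{}{f(s)Q'f(s)^{*}}{\Theta(ds)}$ is exactly the quantity from \cref{24051701} and $\nu_f=\integral{S}{}{(f(s)\nu')}{\Theta(ds)}$ is the superposition of the image measures $f(s)\nu'$ (consistent with the finiteness of $L_f=\int_{\R^m}\min\{1,\norm{x}^2\}\,\nu_f(dx)$). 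Here I write $t>0$ for the scaling parameter to keep it disjoint from the integration point $s\in S$.

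For part (a) I would use the standard criterion that an infinitely-divisible law with triplet $[\gamma_f,Q_f,\nu_f]$ fails to be full exactly when some $v\neq 0$ makes $\skp{v}{I_M(f)}$ degenerate, i.e. when $\skp{Q_f v}{v}=0$ \emph{and} $\nu_f(\{x:\skp{v}{x}\neq 0\})=0$. Fixing such a $v$, I pull $f$ through the inner product to rewrite these as $\skp{Q_f v}{v}=\integral{S}{}{\skp{Q'(f(s)^{*}v)}{f(s)^{*}v}}{\Theta(ds)}$ and $\nu_f(\{x:\skp{v}{x}\neq 0\})=\integral{S}{}{\nu'(\{y:\skp{f(s)^{*}v}{y}\neq 0\})}{\Theta(ds)}$. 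On the set $A$ one has $f(s)^{*}v\neq 0$ since $f(s)$ is invertible and $v\neq 0$; because $\mu$ is full, for each such $s$ either $\skp{Q'(f(s)^{*}v)}{f(s)^{*}v}>0$ or $\nu'(\{y:\skp{f(s)^{*}v}{y}\neq 0\})>0$. Splitting $A$ according to which alternative holds on a subset of positive $\Theta$-measure then forces either $\skp{Q_fv}{v}>0$ or $\nu_f(\{x:\skp{v}{x}\neq 0\})>0$, contradicting degeneracy; hence $I_M(f)$ is full.

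For part (b) the computation is driven by the commutation hypothesis. Taking adjoints in $f(s)B=Bf(s)$ gives $B^{*}f(s)^{*}=f(s)^{*}B^{*}$ for $\Theta$-a.e.\ $s$, so $f(s)^{*}$ commutes with every $t^{B^{*}}=\exp((\ln t)B^{*})$. Denoting by $a_t$ the shifts of $\mu$ from \eqref{eq:18051711} and applying that relation under the integral,
\[
t\,\Psi(u)=\integral{S}{}{t\,\psi(f(s)^{*}u)}{\Theta(ds)}=\integral{S}{}{\bigl(\psi(t^{B^{*}}f(s)^{*}u)+\im\skp{a_t}{f(s)^{*}u}\bigr)}{\Theta(ds)},
\]
and replacing $t^{B^{*}}f(s)^{*}u$ by $f(s)^{*}t^{B^{*}}u$ turns the first summand into $\Psi(t^{B^{*}}u)$. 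The second summand is $\im\skp{\,\cdot\,}{u}$ with the shift
\begin{equation}\label{eq:30051706}
\tilde a_t=\integral{S}{}{f(s)\,a_t}{\Theta(ds)},
\end{equation}
so that $t\,\Psi(u)=\Psi(t^{B^{*}}u)+\im\skp{\tilde a_t}{u}$. By \cref{18051710} this is precisely operator-stability with exponent $B$; and when $\mu$ is strictly operator-stable one has $a_t=0$, hence $\tilde a_t=0$, so $I_M(f)$ is strictly operator-stable. Infinite divisibility of $I_M(f)$ is automatic from the construction of $M$.

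The steps needing the most care are, in part (a), the correct identification of $\nu_f$ as a superposition of push-forwards (from the construction of $M$ in \cite{integral}, $Q_f$ already being given by \cref{24051701}) and the passage through the disjunction in the definition of fullness; and, in part (b), the existence of the shift \eqref{eq:30051706}. The latter I would justify not by absolute convergence but by observing that $t\,\Psi(u)$ and $\Psi(t^{B^{*}}u)$ are both finite because $f\in\mathcal{I}(M)$, so their difference is a finite, $u$-linear imaginary functional, which \eqref{eq:30051706} represents; the integrability conditions of \cref{24051701} then guarantee that the defining integral converges. I expect the handling of the disjunction in the fullness argument to be the genuine obstacle, since it is the one place where a pointwise-in-$s$ non-degeneracy of $\mu$ must be upgraded to a global non-degeneracy of $I_M(f)$ via a positive-measure splitting of $A$.
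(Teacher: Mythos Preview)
Your argument for part (b) is essentially identical to the paper's: the same commutation step $t^{B^{*}}f(s)^{*}=f(s)^{*}t^{B^{*}}$, the same integrated scaling identity, and the same justification of the existence of $\tilde a_t=\int_S f(s)a_t\,\Theta(ds)$ by taking the difference of two finite integrals and then specialising $u$ to unit vectors.

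For part (a) your approach is correct but differs from the paper's. The paper avoids the triplet decomposition entirely and works only with the real part of the log-characteristic function: if $I_M(f)$ were not full, Lemma~1.3.11 in \cite{thebook} gives $u\neq 0$ with $\int_S \text{Re }\psi(af(s)^{*}u)\,\Theta(ds)=0$ for every $a\in\R$; since $\text{Re }\psi\le 0$, this forces $\text{Re }\psi(af(s)^{*}u)=0$ for $\Theta$-a.e.\ $s$ and, by continuity and a countable density argument, for all $a\in\R$ simultaneously off a single null set. Picking $\tilde s\in A$ outside that null set yields $\tilde u:=f(\tilde s)^{*}u\neq 0$ with $\text{Re }\psi(a\tilde u)=0$ for all $a$, contradicting fullness of $\mu$. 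This route packages the Gaussian and L\'evy contributions together inside $\text{Re }\psi$, so the disjunction you flag as ``the genuine obstacle'' never arises, and there is no need to identify $\nu_f$ as the superposition $\int_S(f(s)\nu')\,\Theta(ds)$. Your triplet-based argument buys a more explicit picture of \emph{which} component (Gaussian or jump) witnesses fullness, at the cost of those two extra ingredients; the paper's $\text{Re }\psi$ argument is shorter and needs only the characteristic-function formula you already wrote down for $\Psi$.
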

\begin{proof}
\begin{itemize}
\item[(a)] Suppose that $I_M(f)$ is not full. Then Lemma 1.3.11 in \cite{thebook}, Example 3.7 (a) and Theorem 5.4 in \cite{integral} provide the existence of an $u \in \Gamma_m $ such that $\int_S \text{Re } \psi(f(s)^{*}au) \Theta(ds)=0$ for every $a \in \R$. Since $\text{Re } \psi (\cdot) \le 0$ we obtain $\Theta$-null sets $E_a$ such that $\text{Re } \psi (a f(s)^{*}u)=0$ for every $s \in E_A^c$. By continuity of $\psi$ and as $\Q$ is dense in $\R$ this implies the existence of a further $\Theta$-null set $E$ with $\text{Re } \psi (a f(s)^{*}u)=0$ for all $s \in E^c$ and $a \in \R$. Let $\tilde{u}:= f(\tilde{s})u \ne 0$ for some $ \tilde{s} \in A \cap E^c \ne \emptyset$ to see that $\mu$ is not full according to Lemma 1.3.11 in \cite{thebook}, which yields a contradiction.
\item[(b)] The given assumption ensures that $r^B f(s)=f(s) r^B$ for every $r>0$ and $ s \in S$ (except for a potential null set which we neglect subsequently), such that \eqref{eq:18051711} implies
\begin{equation*}
r \cdot \psi(f(s)^{*}u)=\psi((f(s)r^B)^{*}u)+\im \skp{a_r}{f(s)^{*}u} =\psi(f(s)^{*}r^{B^{*}}u)+\im \skp{a_r}{f(s)^{*}u}
\end{equation*}
for any $r>0, u \in \R^m$ and $s \in S$. Hence and with respect to the remarks at the beginning of chapter 2 as well as Theorem 5.4 in \cite{integral} again we obtain
\allowdisplaybreaks
\begin{align}
\bigerwartung{\text{e}^{\im \skp{I_M(f)}{u}}}^r &= \exp \left(\, \integral{S}{}{\left[ \psi(f(s)^{*}r^{B^{*}}u)+\im \skp{a_r}{f(s)^{*}u} \right] }{\Theta(ds)} \right) \notag \\ 
&= \exp \left( \, \integral{S}{}{\psi(f(s)^{*}r^{B^{*}}u)}{\Theta(ds)} \right)  \cdot  \exp \left( \im \integral{S}{}{\skp{f(s)a_r}{u} }{\Theta(ds)} \right) \notag  \\
&= \bigerwartung{\text{e}^{\im \skp{I_M(f)}{r^{B^{*}}u}}} \cdot \exp \left( \im \skpzwei{\integral{S}{}{f(s)a_r}{\Theta(ds)}}{u} \right), \label{eq:30051706} 
\end{align}
since $f \in \mathcal{I}(M)$ implies that the second step is valid for every $u \in \R^m$. Therefore the last one, too (consider unit vectors for $u$).
\end{itemize}
\end{proof}
Part (b) of \cref{20061701} reveals a new condition (obsolete for $\alpha$-stable generators) which seems to be quite challenging. However, the latter aspect can be softened by considering integrands $f$ which have an exponential form. Also note that the previous proposition can be extended similarly for $\C^m$-valued random measures with an $\R^{2m}$-valued generator due to the remarks in \cite{integral} and by means of the so called associated mapping of $f$ (see chapter 5 below). In contrast, the \textit{partial perspective}, which has also been presented there, appears more mentionable and requires a specific form of the exponent (due to the different dimensions that occur). In any case we recall some of the results in \cite{integral} needed later.
\begin{cor} \label{29061701}
Replace $\mu$ by an $\R^{2m}$-valued full and (strictly) operator-stable distribution $\tilde{\mu}$ as generator; denote the resulting $\R^{2m}$-valued ISRM by $M$ and the identified $\C^m$-valued one by $\widetilde{M}$, i.e. $M$ is the real associated ISRM of $\widetilde{M}$ in the sense of \cite{integral}. Then we have for any $f \in \mathcal{I}_p(\widetilde{M})=\{f: \text{$f$ is partially integrable w.r.t $\widetilde{M}$}\}$:
\begin{itemize}
\item[(a)] If there exists a set $A \in \Sigma$ with $\Theta'(A)>0$ such that $|\text{det}(\text{Re } f(s))| + |\text{det}(\text{Im } f(s))|>0$ for all $s \in A$, then $\text{Re } I_{\widetilde{M}}(f)$ is full.
\item[(b)] Assume that $(\text{Re }f(s))  B = B (\text{Re }f(s))$ and $(\text{Im }f(s))  B = B (\text{Im }f(s))$ hold $\Theta$-a.e., where $\widetilde{B} \in$ L$(\R^{2m})$ is an exponent of $\widetilde{\mu}$ which admits the following representation:
\begin{equation*}
\widetilde{B}=\begin{pmatrix}B & 0 \\ 0 & B \end{pmatrix} =:B \oplus B \quad \text{for some $B \in$ L$(\R^m)$.}
\end{equation*}
Then $\text{Re } I_{\widetilde{M}}(f)$ is also (strictly) operator-stable with exponent $B$. More precisely, the shifts of $\text{Re } I_{\widetilde{M}}(f)$ are given by 
\begin{center}
$\integral{S}{}{ [  (\text{Re }f(s)) a_{r,1}- (\text{Im } f(s)) a_{r,2} ] }{ \Theta(ds)}, \quad r>0$,
\end{center}
where $a_r=(a_{r,1},a_{r,2})$ are the shifts of $\widetilde{\mu}$ for $r>0$.
\end{itemize}
\end{cor}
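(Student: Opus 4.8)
The plan is to push the whole problem to the $\R^{2m}$-valued picture via the associated mapping and then repeat, with small modifications, the arguments of \cref{20061701}. Write $R(s):=\text{Re } f(s)$ and $J(s):=\text{Im } f(s)$ and let $g:S \to \Li{2m}$ be the associated mapping of $f$, namely
\[
g(s)=\begin{pmatrix} R(s) & -J(s) \\ J(s) & R(s) \end{pmatrix}, \qquad \text{so that} \qquad g(s)^{*}\begin{pmatrix} u \\ 0 \end{pmatrix}=\begin{pmatrix} R(s)^{*}u \\ -J(s)^{*}u \end{pmatrix}.
\]
By the partial-integrability framework of \cite{integral} --- which is built precisely so that $\text{Re } I_{\widetilde{M}}(f)$ is governed by $g$ together with the real associated ISRM $M$ --- the $\R^m$-valued random vector $\text{Re } I_{\widetilde{M}}(f)$ is infinitely divisible with log-characteristic function
\[
\Psi(u)=\integral{S}{}{\psi\left(R(s)^{*}u,-J(s)^{*}u\right)}{\Theta(ds)}, \qquad u \in \R^m,
\]
where $\psi$ is the log-characteristic function of $\widetilde{\mu}$. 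This identity is the common engine for both parts.

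For (a) I would copy the contradiction argument of \cref{20061701}(a). If $\text{Re } I_{\widetilde{M}}(f)$ were not full, Lemma 1.3.11 in \cite{thebook} (together with Theorem 5.4 in \cite{integral}) would yield $u \in \Gamma_m$ with $\text{Re }\Psi(au)=0$ for all $a \in \R$. As $\text{Re }\psi \le 0$, the integral formula forces $\text{Re }\psi(aR(s)^{*}u,-aJ(s)^{*}u)=0$ for $\Theta$-a.e.\ $s$ for each fixed $a$, and a density-and-continuity argument in $a$ delivers one $\Theta$-null set $E$ outside of which this vanishing holds for all $a$. Picking $\tilde{s}\in A \cap E^{c}$, the hypothesis $|\det R(\tilde{s})|+|\det J(\tilde{s})|>0$ guarantees that $R(\tilde{s})$ or $J(\tilde{s})$ is invertible, so the vector $w:=(R(\tilde{s})^{*}u,-J(\tilde{s})^{*}u)\in\R^{2m}$ is nonzero; then $\text{Re }\psi(aw)=0$ for all $a$ says that $\widetilde{\mu}$ is degenerate in the direction $w$, contradicting its fullness. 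It is worth stressing that one cannot simply invoke \cref{20061701}(a) for $g$: invertibility of $g(s)$ is equivalent to invertibility of $f(s)=R(s)+\im J(s)$ over $\C$, which is strictly stronger than $|\det R(s)|+|\det J(s)|>0$, so the direct line-tracing argument is genuinely needed.

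For (b) the two commutativity hypotheses combine, because $\widetilde{B}=B\oplus B$ is block diagonal, into $g(s)\widetilde{B}=\widetilde{B}g(s)$ $\Theta$-a.e.; in particular $R(s)^{*}r^{B^{*}}=r^{B^{*}}R(s)^{*}$ and $J(s)^{*}r^{B^{*}}=r^{B^{*}}J(s)^{*}$ for all $r>0$. I would then run the computation of \cref{20061701}(b) on $\Psi$. Applying \eqref{eq:18051711} to $\widetilde{\mu}$ with exponent $\widetilde{B}$ gives, for $v=(R(s)^{*}u,-J(s)^{*}u)$,
\[
r\cdot\psi(v)=\psi(r^{\widetilde{B}^{*}}v)+\im\skp{a_r}{v}, \qquad r^{\widetilde{B}^{*}}v=\left(R(s)^{*}r^{B^{*}}u,\,-J(s)^{*}r^{B^{*}}u\right)=g(s)^{*}\begin{pmatrix} r^{B^{*}}u \\ 0 \end{pmatrix},
\]
where the middle identity is exactly where commutativity is used. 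Integrating over $S$ turns the first term into $\Psi(r^{B^{*}}u)$, while $\skp{a_r}{v}=\skp{R(s)a_{r,1}-J(s)a_{r,2}}{u}$ turns the second into $\skpzwei{\integral{S}{}{[R(s)a_{r,1}-J(s)a_{r,2}]}{\Theta(ds)}}{u}$. Hence
\[
\bigerwartung{\text{e}^{\im\skp{\text{Re } I_{\widetilde{M}}(f)}{u}}}^{r}=\bigerwartung{\text{e}^{\im\skp{\text{Re } I_{\widetilde{M}}(f)}{r^{B^{*}}u}}}\cdot\exp\left(\im\skpzwei{\integral{S}{}{[(\text{Re } f(s))a_{r,1}-(\text{Im } f(s))a_{r,2}]}{\Theta(ds)}}{u}\right),
\]
which is the operator-stability relation \eqref{eq:18051706} for $\text{Re } I_{\widetilde{M}}(f)$ with exponent $B$ and the claimed shifts; note $B\in Q(\R^m)$ since $\lambda_B=\lambda_{\widetilde{B}}>0$, and strictness is inherited when $a_r=0$. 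Fullness, when the hypothesis of (a) also holds, then upgrades this to operator-stability in the sense of \cref{18051710}.

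The step I expect to be the main obstacle is the bookkeeping of the first paragraph: rigorously reducing the complex object $\text{Re } I_{\widetilde{M}}(f)$ to the real associated mapping $g$ and deriving the formula for $\Psi$ from the partial-integrability theory of \cite{integral} --- this is the only place where \emph{partially integrable} (rather than integrable) is essential, and where the block form $\widetilde{B}=B\oplus B$ is forced. Once $\Psi$ is in hand, part (b) is a routine transcription of \cref{20061701}(b), whereas part (a) requires the slightly more careful direct argument because the determinant condition is weaker than invertibility of $g$.
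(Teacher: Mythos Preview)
Your proposal is correct and follows essentially the same line as the paper, which simply points to Remark 5.12 in \cite{integral} together with the identity $r^{(B\oplus B)^{*}}=r^{B^{*}}\oplus r^{B^{*}}$ and says the rest follows from \cref{20061701}. You have unpacked precisely those two ingredients: the formula $\Psi(u)=\int_S\psi(R(s)^{*}u,-J(s)^{*}u)\,\Theta(ds)$ for the log-characteristic function of $\text{Re }I_{\widetilde{M}}(f)$ is exactly what Remark 5.12 delivers (the paper uses the same formula repeatedly in \cref{chapter5}), and your computation in (b) is the straightforward transcription of the proof of \cref{20061701}(b). Your observation in (a) that the hypothesis $|\det R(s)|+|\det J(s)|>0$ does \emph{not} force the associated $2m\times 2m$ block matrix to be invertible (e.g.\ $R=I_2$, $J=\begin{smallmatrix}0&-1\\1&0\end{smallmatrix}$) is a genuine point the paper's one-line proof glosses over; your remedy of rerunning the contradiction argument directly --- noting that invertibility of either $R(\tilde{s})$ or $J(\tilde{s})$ already makes $(R(\tilde{s})^{*}u,-J(\tilde{s})^{*}u)\neq 0$ --- is exactly what is needed.
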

\begin{proof}
In view of Remark 5.12 in \cite{integral} and the fact that $r^{(B \oplus B)^{*}}=r^{B^{*}} \oplus r^{B^{*}}$ for every $r>0$, the proof can be deduced easily from \cref{20061701}.
\end{proof}
\section{Generalized domains of attraction}
From now on we want to consider multivariate random fields of the form $\mathbb{X}=\{X(t):t \in \R^d\}$ with $X(t)$ being $\R^m$-valued. In this context we are interested in the modelilng of dependence structures, more precisely we want to focus on operator-self-similar random fields as introduced in \cite{LiXiao}. In fact, the following definition coincides mostly with the corresponding one in \cite{LiXiao} and extends the idea of \cite{Sato}. Generally, both sources provide plenty of further properties, relations and applications that underline the importance of this concept.
\begin{defi} \label{31051703}
For $E \in Q(\R^d)$ we call $\mathbb{X}$ \textit{wide-sense operator-self-similar} (short: WOSS) with \textit{time-scaling exponent} $E$, if for every $r>0$ there exist $B_r  \in$ L$(\R^m)$ and a \textit{shift function} $b_r: \R^d \rightarrow \R^m$ such that
\begin{equation} \label{eq:31051701}
\{X(r^E t): t \in \R^d\} \overset{fdd}{=} \{B_r X(t)+b_r(t): t \in \R^d\}.
\end{equation}
Furthermore, we say that $\mathbb{X}$ is \textit{operator-self-similar} (short: OSS), if the shift functions $b_r$ are constant for every $r>0$ or \textit{strictly operator-self-similar}, if they even fulfill $b_r \equiv 0$, respectively.
\end{defi}
Observe that neither the time-scaling exponent $E$ nor the operators $(B_r)$ are unique. Additionally we call $\mathbb{X}$ \textit{full} if this applies to the distribution of $X(t)$ for each $t \ne 0$. In this case \eqref{eq:31051701} and Lemma 2.3.5 in \cite{thebook} imply that $(B_r)_{r>0} \subset$ GL$(\R^m)$. This can be precised by the following theorem which is essentially due to Theorems 2.1 and 2.2 as well as Corollary 2.1 in \cite{LiXiao}. 
\begin{theorem}
Let $\mathbb{X}=\{X(t):t \in \R^d\}$ be a full, stochastically continuous random field with values in $\R^m$ and consider $E \in Q(\R^d)$ arbitrary. 
\begin{itemize}
\item[(a)] If $\mathbb{X}$ is WOSS with time-scaling exponent $E$, then there exist an operator $D \in \Li{m}$ with $\lambda_D \ge 0$ and a continuous function $b_r(t):(0,\infty) \times \R^d \rightarrow \R^m$ such that
\begin{equation} \label{eq:31051705}
\forall r >0: \qquad \{X(r^E t): t \in \R^d\} \overset{fdd}{=} \{r^D X(t)+b_r(t):t \in \R^d\}.
\end{equation}
Furthermore we have: $X(0)=a$ a.s. for some $a \in \R^m$ if and only if $D \in Q(\R^m)$ and in this case $b_0 \equiv a$ leads to a continuous extension of $b_r(t)$.
\item[(b)] If $X$ is (strictly) $OSS$, part (a) holds similarly, where $b_r(t)$ can be chosen constant in $t$ for every $r>0$ (with $b_r(t) \equiv 0$ and $a=0$, respectively).
\end{itemize}
\end{theorem}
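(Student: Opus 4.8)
The plan is to replace the abstract operators $(B_r)$ of \eqref{eq:31051701} by a single matrix power $r^D$, then to determine the admissible sign of $\lambda_D$, and finally to extract the shift. Fullness together with Lemma 2.3.5 in \cite{thebook} already gives $(B_r)_{r>0}\subset\GL(\R^m)$. Applying \eqref{eq:31051701} to the two ways of writing $(rs)^Et=r^E(s^Et)$, I obtain
\begin{equation*}
\{B_{rs}X(t)+b_{rs}(t):t\in\R^d\}\overset{fdd}{=}\{B_rB_sX(t)+B_rb_s(t)+b_r(s^Et):t\in\R^d\}.
\end{equation*}
Let $\mathcal G:=\{O\in\GL(\R^m):\{OX(t)+c(t):t\}\overset{fdd}{=}\{X(t):t\}\text{ for some }c:\R^d\to\R^m\}$ be the symmetry group of the field. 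Since any such $O$ is in particular a symmetry of the full marginal $\mathcal L(X(t_0))$ for a fixed $t_0\neq0$, $\mathcal G$ is a closed subgroup of the compact symmetry group of a full law (see \cite{thebook}) and hence itself compact. The displayed identity shows $W(r,s):=B_{rs}^{-1}B_rB_s\in\mathcal G$ for all $r,s>0$; equivalently, via $u=\ln r$, the map $u\mapsto B_{e^u}$ is a homomorphism from $(\R,+)$ into $\GL(\R^m)/\mathcal G$.

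The crux --- and the step I expect to be the main obstacle --- is to turn this homomorphism modulo $\mathcal G$ into an honest one-parameter group. Stochastic continuity of $\mathbb X$ makes $r\mapsto\mathcal L(X(r^Et))$ weakly continuous, and the convergence-of-types theorem (available because $X(t)$ is full) then shows that the cosets $B_r\mathcal G$ vary continuously, hence measurably, with $r$. Because $\mathcal G$ is compact one can select representatives $\widetilde B_r\in B_r\mathcal G$ obeying the genuine group law $\widetilde B_{rs}=\widetilde B_r\widetilde B_s$; this averaging/selection over the compact group is precisely the content of Theorems 2.1--2.2 in \cite{LiXiao} (and mirrors the Hudson--Mason construction of an exponent). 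A continuous one-parameter subgroup of $\GL(\R^m)$ is a matrix exponential, so $\widetilde B_r=r^D$ for some $D\in\Li{m}$. Replacing $B_r$ by $\widetilde B_r$ only composes the field with the $\mathcal G$-symmetry $B_r^{-1}\widetilde B_r$, which alters \eqref{eq:31051701} merely by a new shift function; this yields \eqref{eq:31051705}.

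To fix the spectrum I would pass to the limit $r\downarrow0$. For $t\neq0$ one has $r^Et\to0$ since $E\in Q(\R^d)$, so stochastic continuity gives $X(r^Et)\Rightarrow X(0)$, i.e. $r^DX(t)+b_r(t)\Rightarrow\mathcal L(X(0))$ with a proper limit. As $X(t)$ is full, the convergence-of-types theorem forces $\{r^D:0<r\le1\}$ to be bounded, which by the Jordan decomposition of $D$ (cf. \eqref{eq:17051701}) fails whenever $\lambda_D<0$, since the component of $r^D$ on a spectral subspace of negative real part is unbounded as $r\downarrow0$; hence $\lambda_D\ge0$. For the dichotomy, if $D\in Q(\R^m)$ then $\|r^D\|\to0$ as $r\downarrow0$, so $r^DX(t)\to0$ in probability and the limit relation gives $X(0)\overset{d}{=}\lim_{r\downarrow0}b_r(t)$, a constant $a$; conversely, if $X(0)=a$ a.s. then $\mathcal L(X(0))=\varepsilon_a$ is a point mass, which by fullness of $X(t)$ is incompatible with a surviving spread of $r^DX(t)$ along a zero-real-part spectral subspace, so $\lambda_D=0$ is excluded and $D\in Q(\R^m)$.

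It remains to handle the shift and the OSS refinements. With $B_r=r^D\in\GL(\R^m)$ fixed, the vector $b_r(t)$ in \eqref{eq:31051705} is uniquely determined, since a full law has no nontrivial translation symmetry (so $r^DX(t)+b\overset{d}{=}r^DX(t)+b'$ forces $b=b'$), and joint continuity of $(r,t)\mapsto b_r(t)$ follows from stochastic continuity of $\mathbb X$ and the continuity clause of the convergence-of-types theorem. The refinements are obtained by running the same construction with the appropriate symmetry group: for strictly OSS fields with the zero-shift symmetries $\{O:\{OX(t):t\}\overset{fdd}{=}\{X(t):t\}\}$, and for OSS fields with the constant-shift symmetries; in each case the cocycle $W(r,s)$ already lies in this smaller (still compact) group, so the selection yields $\widetilde B_r=r^D$ with $B_r^{-1}\widetilde B_r$ in that smaller group, whence the resulting shift is $\equiv0$ (strictly OSS) or constant in $t$ (OSS), as in Corollary 2.1 of \cite{LiXiao}. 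In the strictly OSS case $b_r(t)\equiv0$, and if in addition $X(0)=a$ is degenerate (equivalently $D\in Q(\R^m)$, $\lambda_D>0$) then $X(0)\overset{d}{=}r^DX(0)$ with $r\downarrow0$ forces $a=0$. Finally, in the degenerate case setting $b_0\equiv a$ reproduces the limit $\lim_{r\downarrow0}b_r(t)$ found above and hence extends $b_r(t)$ continuously across $r=0$.
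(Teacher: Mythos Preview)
The paper does not supply its own proof of this theorem; it simply attributes the result to Theorems 2.1, 2.2 and Corollary 2.1 in \cite{LiXiao}. Your outline reproduces exactly that route --- the Hudson--Mason/Li--Xiao compact-symmetry-group argument, with the decisive selection of a one-parameter representative explicitly deferred to \cite{LiXiao} --- so your approach coincides with what the paper invokes.
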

We call the operator $D$ (not unique again) sucht that \eqref{eq:31051705} holds a \textit{space-scaling exponent} of $\mathbb{X}$. On the other hand we call the random field $(E,D)$-OSS, if merely \eqref{eq:31051705} is fulfilled with $b_r(t)=0$, hence implying the strict case. Also note that every WOSS field that satisfies the assumptions of the previous theorem becomes an $(E,D)$-OSS field by a deterministic transformation (see \cite{LiXiao} again). \\
We turn over to the main aspect of this section which underlines the theoretical importance of operator-self-similar random fields. The following definition seems to be a natural extension of the corresponding univariate notation (see section 11 in \cite{thebook} for example) and at the same time it allows us to prove the desired result in a very comprehensive way. This may fail as soon as we would allow general operators $T_s$ instead of $s^V$ subsequently, as the definition of operator-self-similarity is quite restrictive already concerning the scaling of time.
\begin{defi} \label{07111701}
Let $\mathbb{X}=\{X(t):t \in \R^d\}$ and $\mathbb{Y}=\{Y(t):t \in \R^d\}$ be random fields with values in $\R^m$, respectively. Then we say that $\mathbb{Y}$ belongs to the \textit{generalized domain of attraction of $\mathbb{X}$} (short: GDOA$(\mathbb{X})$), if there exist an operator $V \in Q(\R^d)$ as well as operators $A_s \in \Li{m}$ and functions $a_s:\R^d \rightarrow \R^m$ for every $s>0$ such that
\begin{equation} \label{eq:31051710}
\{A_s Y(s^V t)+a_s(t): t \in \R^d\} \overset{fdd}{\Longrightarrow} \{X(t):t \in \R^d\} \quad \text{as $s \rightarrow \infty$}.
\end{equation}
Here $\overset{fdd}{\Rightarrow}$ means convergence of all finite-dimensional distributions. Moreover, if the function $a_s$ can be chosen constant (with $a_s \equiv 0$) for every $s>0$, we say that $\mathbb{Y}$ belongs to the (\textit{strict}) \textit{domain of attraction of $\mathbb{X}$} (short: DOA$(\mathbb{X})$ and DOA${}_s(\mathbb{X})$, respectively).
\end{defi}
Let us state the main result of this section which extends the corresponding result in \cite{HuMa82}.
\begin{theorem} \label{31051720}
Let $\mathbb{X}$ be as in \cref{07111701}. Assume additionally that $\mathbb{X}$ is full and stochastically continuous. Then we have: $\mathbb{X}$ is WOSS if and only if GDOA$(\mathbb{X})\ne \emptyset$. Moreover, every operator $V$ that fulfills \eqref{eq:31051710} is an time-scaling exponent of $\mathbb{X}$ and vice versa.
\end{theorem}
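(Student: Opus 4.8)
The statement is an equivalence together with an identification of the admissible exponents, and the plan is to prove it by a convergence-of-types argument, handling the two implications separately. The direction ``$\mathbb{X}$ WOSS $\Rightarrow$ GDOA$(\mathbb{X})\ne\emptyset$'' is easy: I would exhibit $\mathbb{X}$ itself as an attracted field. Since $\mathbb{X}$ is full, the WOSS relation \eqref{eq:31051701} holds with operators $(B_r)_{r>0}\subset\GL(\R^m)$ (as noted after \cref{31051703}). Taking $\mathbb{Y}:=\mathbb{X}$, $V:=E$, $A_s:=B_s^{-1}$ and $a_s(t):=-B_s^{-1}b_s(t)$ turns \eqref{eq:31051701} into $\{A_s X(s^V t)+a_s(t)\}\overset{fdd}{=}\{X(t)\}$ for \emph{every} $s$, so the convergence \eqref{eq:31051710} holds trivially as $s\to\infty$. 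This simultaneously shows that any time-scaling exponent $E$ occurs as an admissible $V$, which is the ``vice versa'' half of the final claim.

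The substantial direction is ``GDOA$(\mathbb{X})\ne\emptyset \Rightarrow$ WOSS''. I would pick a witness $\mathbb{Y}$ with data $V,A_s,a_s$ as in \eqref{eq:31051710} and fix $r>0$. The crucial algebraic input is the one-parameter-group identity $s^V r^V=(sr)^V$. Fixing some $t_0\ne0$ and evaluating \eqref{eq:31051710} at the time $r^V t_0$ gives $A_s Y((sr)^V t_0)+a_s(r^V t_0)\Rightarrow X(r^V t_0)$, while replacing $s$ by $sr$ in \eqref{eq:31051710} at time $t_0$ gives $A_{sr}Y((sr)^V t_0)+a_{sr}(t_0)\Rightarrow X(t_0)$. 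Both are affine images of the \emph{same} random vector $Y_s:=Y((sr)^V t_0)$, and since $t_0\ne0$ and $r^V\in\GL(\R^d)$, both limits $X(t_0)$ and $X(r^V t_0)$ are full.

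I would then invoke the multivariate convergence-of-types theorem (Theorem 2.3.2 together with Lemma 2.3.5 in \cite{thebook}) for the common sequence $Y_s$: fullness of the two limits forces the norming operators to be invertible for large $s$, the ratio $C_s:=A_s A_{sr}^{-1}$ to converge to some $B_r\in\GL(\R^m)$, and the shift $\beta_s(t_0):=a_s(r^V t_0)-C_s a_{sr}(t_0)$ to converge to some $b_r(t_0)$, with $X(r^V t_0)\overset{d}{=}B_r X(t_0)+b_r(t_0)$. The decisive feature is that $C_s$ does not depend on the time argument, so $B_r$ is pinned down once and for all from this single marginal.

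Because $C_s\to B_r$ is now fixed and invertible, the remainder is a converging-together (Slutsky) argument carried out jointly: for any finite family $t_1,\dots,t_k$ the block-diagonal operator $C_s\oplus\cdots\oplus C_s$ converges to the invertible $B_r\oplus\cdots\oplus B_r$, so applying the same comparison on $\R^{mk}$ yields $\beta_s(t_i)\to b_r(t_i)$ and
\[
(X(r^V t_1),\dots,X(r^V t_k))\overset{d}{=}(B_r X(t_1)+b_r(t_1),\dots,B_r X(t_k)+b_r(t_k)),
\]
which is exactly \eqref{eq:31051701} with time-scaling exponent $V$; hence $\mathbb{X}$ is WOSS and $V$ is one of its time-scaling exponents. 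Note that joint fullness of $(X(t_1),\dots,X(t_k))$ is \emph{not} needed here, precisely because $B_r$ was already obtained from the full one-time marginal. I expect the main obstacle to be the convergence-of-types step, and within it the point that the $A_s$ are a priori only in $\Li{m}$: one must use fullness of the two limits of the common sequence $Y_s$ to guarantee that $A_{sr}$ is invertible for large $s$ (so that $C_s$ is even well defined) and that $\{C_s\}$ and $\{C_s^{-1}\}$ are relatively compact, before the limit $B_r\in\GL(\R^m)$ can be extracted.
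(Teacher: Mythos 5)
Your proposal is correct, and although its skeleton (the trivial forward direction, and the comparison operators $C_s=A_sA_{sr}^{-1}$ built from the group identity $s^Vr^V=(sr)^V$) matches the paper's proof, the way you finish the converse direction is genuinely different and more economical. The paper, after extracting a subsequential limit $H$ of $H_n:=A_nA_{nr}^{-1}$, obtains limits of the shift functions $h_n(\cdot)$ only through a diagonal argument over $\Q^d=\{q_1,q_2,\dots\}$, proves \eqref{eq:31051701} first on $\Q^d$, and then invokes the stochastic continuity of $\mathbb{X}$ together with one more round of subsequence extraction (via \cref{08061701}~(a)) to pass to all of $\R^d$. You instead pin down $B_r$ once, from the single full marginal at $t_0\neq 0$, and observe that along the subsequence where $C_s\to B_r$ the shifts converge automatically at \emph{every} time point: by the converging-together argument $C_s\bigl(A_{sr}Y((sr)^Vt)+a_{sr}(t)\bigr)\Rightarrow B_rX(t)$, so \cref{08061701}~(a) bounds $\beta_s(t)$, and any two subsequential limits $b,b'$ satisfy $B_rX(t)+b\overset{d}{=}B_rX(t)+b'$, which forces $b=b'$ since characteristic functions do not vanish near the origin; running the same comparison on $\R^{mk}$ with block-diagonal operators then yields the finite-dimensional identity outright. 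What this buys: your converse never touches the rational grid nor stochastic continuity, so it establishes (indeed strengthens) the relaxations described in \cref{31051721}, and it makes transparent why the corollary closing that section can drop continuity once the shifts are constant. One caveat on wording: the multivariate convergence-of-types theorem does not give convergence of the whole family $C_s$ --- limit operators are unique only up to the symmetry group of $\mathcal{L}(X(t_0))$ --- but only relative compactness of $\{C_s\}$ and $\{C_s^{-1}\}$ together with identification of all limit points; your final sentence shows you are aware of this, so the earlier phrase ``the ratio $C_s$ converges to some $B_r$'' must be read as convergence along a subsequence, which is all that the distributional identity \eqref{eq:31051701} requires.
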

We need two auxiliary results which might be well-known. They follow by a slight refinement of the proofs of Lemma 2.3.8, 2.3.9 and 2.3.16 in \cite{thebook}. The generalized versions that we present now will be useful in \cref{31051721} below. 
\begin{lemma} \label{08061701}
Let $\nu,\mu, \mu_1,...$ be probability measures on $\R^m$ and $(a_n) \subset \R^m$ a sequence of vectors as well as $(A_n) \subset \Li{m}$ a sequence of operators.
\begin{itemize}
\item[(a)] If $\mu_n \rightarrow \mu$ and $\mu_n * \eps_{a_n} \rightarrow \nu$ weakly, then the sequence $(\norm{a_n})$ is bounded.
\item[(b)] Suppose additionally that $\mu$ is full. If $\mu_n \rightarrow \mu$ and $(A_n \mu_n)* \eps_{a_n} \rightarrow \nu$ weakly, then both sequences $(\norm{A_n})$ and $(\norm{a_n})$ are bounded.
\end{itemize}
\end{lemma}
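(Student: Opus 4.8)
The plan is to prove (a) by a direct tightness argument and then obtain (b) by first bounding $(\norm{A_n})$ via a symmetrization that removes the unknown shifts, and afterwards reducing the boundedness of $(\norm{a_n})$ to part (a). For (a), observe that weak convergence yields tightness of both $(\mu_n)$ and $(\mu_n * \eps_{a_n})$. Fixing $\eps=1/4$, I would choose compact sets $K,K' \subset \R^m$ with $\mu_n(K)\ge 3/4$ and $(\mu_n*\eps_{a_n})(K')=\mu_n(K'-a_n)\ge 3/4$ for all $n$. Since these two masses exceed $1$, the sets $K$ and $K'-a_n$ must intersect for every $n$, so some $x_n\in K$ satisfies $x_n+a_n\in K'$; hence $a_n\in K'-K$, a fixed bounded set, proving $(\norm{a_n})$ bounded. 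Note that fullness is not needed here.

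For (b), I first bound $(\norm{A_n})$. Writing $\mu_n^s$ for the symmetrization of $\mu_n$ (the law of $X_n-X_n'$ for independent copies $X_n,X_n'\sim\mu_n$), the shift cancels under symmetrization: if $Z_n \sim (A_n\mu_n)*\eps_{a_n}$ with independent copy $Z_n'$, then $Z_n-Z_n'=A_n(X_n-X_n')\sim A_n\mu_n^s$. Hence $A_n\mu_n^s\to\nu^s$ weakly and $\mu_n^s\to\mu^s$ weakly, where $\mu^s$ is again full: if $\skp{X-X'}{v}=0$ a.s. then $\skp{X}{v}$ is a.s. constant and $\mu$ would sit on a hyperplane. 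In particular $(A_n\mu_n^s)$ is tight. Assume, for contradiction, that $\norm{A_n}\to\infty$ along a subsequence, and (passing to a further subsequence) let $v_n$ be a unit eigenvector of the symmetric operator $A_n^{*}A_n$ for its top eigenvalue $\norm{A_n}^2$, with $v_n\to v$. From $\norm{A_n x}^2=\skp{A_n^{*}A_n x}{x}\ge \norm{A_n}^2\skp{x}{v_n}^2$ I obtain the inclusion $\{x:\norm{A_n x}\le R\}\subseteq\{x:|\skp{x}{v_n}|\le R/\norm{A_n}\}$.

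Combining the tightness of $(A_n\mu_n^s)$ --- giving $R$ with $\mu_n^s(\{\norm{A_n x}\le R\})\ge 1-\eps$ --- with the tightness of $(\mu_n^s)$ --- giving $M$ with $\mu_n^s(\{\norm{x}\le M\})\ge 1-\eps$ --- and using $R/\norm{A_n}\to 0$ together with $v_n\to v$, I would show that for every $\delta>0$ the closed slab $\{x:|\skp{x}{v}|\le\delta\}$ carries $\mu_n^s$-mass at least $1-2\eps$ for all large $n$. The Portmanteau theorem then gives $\mu^s(\{|\skp{x}{v}|\le\delta\})\ge 1-2\eps$; letting $\delta\downarrow 0$ and then $\eps\downarrow 0$ forces $\mu^s(\{\skp{x}{v}=0\})=1$, contradicting the fullness of $\mu^s$. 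Thus $(\norm{A_n})$ is bounded. To bound $(\norm{a_n})$, suppose it is unbounded along a subsequence; since $(\norm{A_n})$ is bounded I may pass to a further subsequence with $A_n\to A$, whence $A_n\mu_n\to A\mu$ weakly (from $\ft{\mu_n}(A_n^{*}u)\to\ft{\mu}(A^{*}u)$ and uniform convergence on compacta). Applying part (a) to $A_n\mu_n\to A\mu$ and $(A_n\mu_n)*\eps_{a_n}\to\nu$ shows $(\norm{a_n})$ is bounded along that subsequence --- a contradiction.

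The main obstacle is the boundedness of $(\norm{A_n})$ in (b): one must convert tightness of the image measures $A_n\mu_n^s$ into genuine concentration of the limit $\mu^s$ on a hyperplane, which is exactly where fullness is used. The delicate points are that the shifts $a_n$ --- not yet controlled at this stage --- have to be discarded by symmetrization (and one must check that symmetrization preserves fullness), and that the thinning direction $v_n$ must be stabilized to a fixed $v$ before invoking Portmanteau on a single closed slab.
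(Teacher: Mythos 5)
Your proof is correct, and both parts are argued soundly: part (a) via tightness of the two sequences and the observation that $K\cap(K'-a_n)\neq\emptyset$, and part (b) via symmetrization (which indeed kills the shifts, commutes with the linear images, preserves weak convergence, and preserves fullness), the eigenvector inequality $\norm{A_nx}\ge\norm{A_n}\,|\skp{x}{v_n}|$, and Portmanteau applied to a closed slab, followed by the reduction of the shift bound to part (a) along a subsequence where $A_n\to A$. Note, however, that the paper does not prove this lemma at all: it only remarks that it follows ``by a slight refinement of the proofs of Lemma 2.3.8, 2.3.9 and 2.3.16'' in \cite{thebook}, which are convergence-of-types arguments carried out at the level of characteristic functions. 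In that approach the unknown shifts are removed by passing to absolute values of Fourier transforms (the exact Fourier-analytic counterpart of your symmetrization, since $|\ft{\mu}|^2=\widehat{\mu^s}$), unboundedness of $(\norm{A_n})$ is turned into a contradiction by evaluating $\ft{\mu_n}(A_n^{*}u)$ along suitably normalized directions and using locally uniform convergence of characteristic functions, and fullness enters through the criterion that $|\ft{\mu}(tw)|=1$ for all $t$ forces $\mu$ onto a hyperplane orthogonal to $w$ (Lemma 1.3.11 in \cite{thebook}). Your route replaces this Fourier machinery by purely measure-theoretic tools (Prokhorov tightness, Portmanteau, and the spectral decomposition of $A_n^{*}A_n$ to locate the degenerate direction), invoking characteristic functions only once to get $A_n\mu_n\to A\mu$; what this buys is a self-contained proof that makes explicit exactly where fullness is used and confirms that part (a) needs no fullness, whereas the paper's citation buys brevity at the cost of leaving the ``refinement'' (handling the shifts $a_n$ and the non-full limit $\nu$) to the reader. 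Both routes rest on the same two structural facts: symmetrization eliminates shifts while preserving fullness, and fullness is what forbids the limit from concentrating on a hyperplane.
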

\begin{proof}[Proof of \cref{31051720}]
Assume that \eqref{eq:31051701} holds. Then $\mathbb{X} \in$ GDOA$(\mathbb{X})$, i.e. GDOA$(\mathbb{X}) \ne \emptyset$, can be verified with $\mathbb{Y}:=\mathbb{X}, A_s:=B_s^{-1},a_s(t):=-B_S^{-1}b_s(t)$ and $V:=E$ in \eqref{eq:31051710}. Note that $B_s \in$ GL$(\R^m)$ for every $s>0$ by fullness of $\mathbb{X}$. Conversely, \eqref{eq:31051710} implies the following convergence in distribution for $k \in \N$ and $t_1,...,t_k \in \R^d$ arbitrary:
\begin{equation} \label{eq:07061701}
(A_s Y(s^V t_1)+a_s(t_1),...,A_s Y(s^V t_k)+a_s(t_k)) \konviv (X(t_1),...,X(t_k)) \quad (\text{as } s \rightarrow \infty).
\end{equation}
For the rest of this proof we fix some $r>0$ and $t \in \R^d$. Then \eqref{eq:07061701} implies on one hand that $A_{nr} Y((nr)^{V}t)+a_{nr}(t) \Rightarrow X(t)$ and on the other hand that $A_n Y((nr)^{V}t)+a_n(r^V t) \Rightarrow X(r^V t)$ as $n \rightarrow \infty$. Here we can temporary assume $t \ne 0$ such that $A_{nr}$ (independent from $t$) has to be invertible due to Lemma 2.3.7 in \cite{thebook} for large $n$. Hence we can define $H_n:=A_n A_{nr}^{-1}$ as well as $h_n(t):=a_n(r^V t)-A_n A_{nr}^{-1}a_{nr}(t)$ to observe that we also have the convergence of $H_n (A_{nr} Y((nr)^V t)+a_{nr}(t))+h_n(t)$ as $n \rightarrow \infty$, again with limit $X(r^V t)$. Now combine the previous findings to see that the sequences $(\norm{H_n})$ and $(\norm{h_n(t)})$ have to be bounded in the light of \cref{08061701} (b). Thus, without loss of generality, we can assume that $H_n \rightarrow H$ for some $H \in \Li{m}$. Similarly, by writing $\Q^d=\{q_1,q_2,...\}$, we see that $(h_n(q_1))$ contains a suitable subsequence $(h_{n_{l,1}}(q_1))_l$ that converges, say with limit $h(q_1)$. However $(h_{n_{l,1}}(q_2))_l$ is bounded, too, with a converging subsequence $(h_{n_{l,2}}(q_2))_l$ and limit $h(q_2)$. Proceed inductively. Then, for given $k \in \N$ and $t_1,...,t_k \in \Q^d$, we can find some $k'$ with $\{t_1,...,t_k\} \subset \{q_1,...,q_{k'}\}$ and extend the thoughts above based on \eqref{eq:07061701}, namely 
\begin{equation*}
(A_{nr} Y((nr)^V q_1)+a_{nr}(q_1),...,A_{nr} Y((nr)^V q_{k'})+a_{nr}(q_{k'})) \konviv (X(q_1),...,X(q_{k'})) 
\end{equation*}
as well as (where we denote the random vector on the left-hand side by $Z$ meanwhile)
\begin{equation*}
\text{diag}(H_n,...,H_n) Z + (h_n(q_1),...,h_n(q_{k'})) \konviv (X(r^Vq_1),...,X(r^V q_{k'})).
\end{equation*}
Since $H_n \rightarrow H$, we can consider the subsequence $(n_{l,k'})$ and Theorem 2.1.8 in \cite{thebook} yields 
\begin{equation} \label{eq:08061702}
\{X(r^V t):t \in \Q^d \} \overset{fdd}{=} \{HX(t)+h(t): t \in \Q^d\}
\end{equation}
after projection. Furthermore, for any $t \in \R^d \setminus \Q^d$ there is a sequence $(q_n')\subset \Q^d$ with limit $t$ while we get $X(r^V q_n') \Longrightarrow X(r^V t)$ by assumption. Hence \cref{08061701} (a) implies that $(\norm{h(q_n')})$ is bounded due to \eqref{eq:08061702} and since $H X(q_n') \Rightarrow H X(t)$ as before. Thus we can choose - and memorize - a further subsequence $(h(q_{n_l}'))$ with some limit $h(t)$ again. Overall this gives a mapping $h:\R^d \rightarrow \R^m$ such that for any $t \in \R^d$ there is a (renamed) sequence $(q_{n,t})\subset \Q^d$ with $q_{n,t} \rightarrow t$ and $h(q_{n,t}) \rightarrow h(t)$ as $n \rightarrow \infty$. Finally, for $k \in \N$ and $t_1,...,t_k \in \R^d$ arbitrary, we see by assumption that 
\begin{equation} \label{eq:08061703}
(X(r^V q_{n,t_1}),...,X(r^V q_{n,t_k})) \konviv (X(r^V t_1),...,X(r^V t_k)).
\end{equation}
At the same time and similar as before we observe with \eqref{eq:08061702} that
\begin{equation} \label{eq:08061704}
(X(r^V q_{n,t_1}),...,X(r^V q_{n,t_k})) \konviv (H X( t_1)+h(t_1),...,H X(t_k)+h(t_k)).
\end{equation}
This extends \eqref{eq:08061702} for $t \in \R^d$, if we combine \eqref{eq:08061703} with \eqref{eq:08061704} and therefore gives the assertion since $r>0$ was arbitrary.
\end{proof}
\begin{remark} \label{31051721}
The assumptions in \cref{31051720} can be relaxed twice. First it is enough to claim that $X(t')$ is full for some $t' \in \R^d$: Use $X(t')=X(r^E r^{-E}t')$ together with Lemma 2.3.5 in \cite{thebook} for necessity. Conversely, \cref{08061701} (a) works again. \\
Furthermore, the proof showed that continuity in distribution is enough as long as it holds for all finite-dimensional random vectors as in \eqref{eq:08061703}. 
\end{remark}
The first part of the previous remark also holds for the next statement.
\begin{cor}
Let $\mathbb{X}=\{X(t):t \in \R^d\}$ be a full random field with values in $\R^m$. Then we have: $\mathbb{X}$ is (strictly) OSS if and only if DOA${}_{(s)}(\mathbb{X}) \ne \emptyset$. Moreover, every operator $V$ that fulfills \eqref{eq:31051710} is an time-scaling exponent of $\mathbb{X}$ and vice versa.
\end{cor}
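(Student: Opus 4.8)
The plan is to run the proof of \cref{31051720} almost verbatim, while keeping track of the fact that the attracting shifts are now constant in $t$ (resp.\ vanish). This single bookkeeping change turns out to eliminate both the diagonal selection over $\Q^d$ and the use of stochastic continuity, which is why the latter hypothesis is absent from the statement. For \emph{necessity}, suppose $\mathbb{X}$ is (strictly) OSS, so that \eqref{eq:31051701} holds with operators $B_r$ and shift functions $b_r$ that are constant in $t$ (resp.\ with $b_r \equiv 0$). By fullness and Lemma 2.3.5 in \cite{thebook} we have $B_r \in \GL(\R^m)$, so putting $\mathbb{Y}:=\mathbb{X}$, $V:=E$, $A_s:=B_s^{-1}$ together with the constant shift $a_s:=-B_s^{-1}b_s$ (resp.\ $a_s\equiv 0$) turns \eqref{eq:31051710} into an exact fdd-equality for every $s$, which trivially converges as $s\to\infty$. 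Hence $\mathbb{X}\in\mathrm{DOA}_{(s)}(\mathbb{X})$ with the same $V=E$, which also settles one half of the ``moreover'' assertion.

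For \emph{sufficiency}, fix $r>0$ and argue as in the proof of \cref{31051720}: from \eqref{eq:31051710} with constant shifts $a_s$ we obtain, for each $t\in\R^d$, both $A_{nr}Y((nr)^{V}t)+a_{nr}\Rightarrow X(t)$ and, using $(nr)^V=n^V r^V$, also $A_nY((nr)^{V}t)+a_n\Rightarrow X(r^Vt)$ as $n\to\infty$. Choose one point $t_0$ at which $X(t_0)$ is full (enough by the first part of \cref{31051721}); then $A_{nr}$ is invertible for large $n$ by Lemma 2.3.7 in \cite{thebook}, and with $H_n:=A_nA_{nr}^{-1}$ the decisive point is that the associated shift
\begin{equation*}
h_n:=a_n-H_n a_{nr}
\end{equation*}
no longer depends on $t$, precisely because the $a_s$ are constant. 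Applying \cref{08061701}~(b) at $t_0$ shows that $(\norm{H_n})$ and $(\norm{h_n})$ are bounded, so after passing to a subsequence we may assume $H_n\to H$ and $h_n\to h$ for some $H\in\Li{m}$ and some constant $h\in\R^m$ (with $h=0$ in the strict case, since there $a_s\equiv 0$). Now for arbitrary $t_1,\dots,t_k\in\R^d$ the defining fdd-convergence gives the joint limit $(A_{nr}Y((nr)^{V}t_j)+a_{nr})_j\Rightarrow(X(t_j))_j$, while applying $\mathrm{diag}(H_n,\dots,H_n)$ and adding $(h_n,\dots,h_n)$ produces the joint limit $(X(r^Vt_j))_j$. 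Since $H_n\to H$ and $h_n\to h$, Theorem 2.1.8 in \cite{thebook} yields
\begin{equation*}
\{X(r^V t):t\in\R^d\}\overset{fdd}{=}\{HX(t)+h:t\in\R^d\},
\end{equation*}
with $h$ constant (resp.\ $h=0$). As $r>0$ was arbitrary, this is exactly the (strict) OSS relation with time-scaling exponent $V$, completing the equivalence and the remaining half of the ``moreover'' claim.

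The main obstacle, and indeed the only genuine departure from \cref{31051720}, is to recognize that constancy of the attracting shifts $a_s$ forces $h_n$ to be independent of $t$. In the WOSS proof it was exactly the $t$-dependence of the shifts that made a diagonal extraction over the countable set $\Q^d$ necessary, followed by a passage to $\R^d$ through stochastic continuity; here both steps collapse, so fullness at a single point suffices and no continuity assumption is needed. The remaining verifications (invertibility of $A_{nr}$, the boundedness estimates, and the convergence-of-types step) are literal transcriptions of the corresponding parts of \cref{31051720}.
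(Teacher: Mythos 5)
Your proof is correct and follows exactly the route the paper takes: the paper's own (much terser) proof consists precisely of the observations that necessity holds with the constant shift $a_s=-B_s^{-1}b_s$, and that constancy of $a_s$ makes $h_n$ constant, so the proof of \cref{31051720} goes through without the diagonal extraction over $\Q^d$ (and hence without stochastic continuity). Your write-up is a faithful, more detailed expansion of that same argument, including the correct use of \cref{08061701}~(b) at a single point of fullness and the convergence-of-types step.
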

\begin{proof}
Suppose $\mathbb{X}$ to be (strictly) OSS, then $a_s=-B_s^{-1}b_s$ is independent from $t$ for every $s>0$. Conversely, if $a_s(\cdot) =a_s$ is constant for every $s>0$, the same is true for $h_n$. This simplifies the proof of \cref{31051720} since we no longer need to consider $\Q^d$.
\end{proof}
\section{Moving-average representation} \label{chapter4}
First of all and inspired by the $\alpha$-stable case considered in \cite{SaTaq94} we define:
\begin{defi}
An $\R^m$-valued random field $\mathbb{X}=\{X(t):t \in \R^d\}$ is called (symmetric or strictly) operator-stable, if all finite-dimensional distributions of $\mathbb{X}$ are (symmetric or strictly) operator-stable. Additionally, we say that $B \in$ L$(\R^m)$ is an \textit{exponent} of $\mathbb{X}$, if the linear combinations of the form $\sum_{j=1}^{k} c_j X(t_j)$ are operator-stable with exponent $B$ for any finite choices $c_1,...,c_k \in \R$ and $t_1,...,t_k \in \R^d$.
\end{defi}
Observe that in difference to the $\alpha$-stable case, in the more general operator-stable situation an exponent $B$ of $\mathbb{X}$ may not exist in general. \\
We now want to extend \eqref{eq:16051701} and the multivariate counterpart in \cite{LiXiao} to the more general case of strictly operator-stable marginals. In order to do so we will use an operator-stable ISRM $M$ and a suitable family $(f_t)_{t \in \R^d} \subset \mathcal{I}(M)$ such that $X(t)=\int_{\R^d} f_t(s) \, M(ds)$. \\
More precisely, let $\mu$ be a full, strictly operator-stable distribution on $\R^m$ with log-characteristic function $\psi$ and exponent $B$ that generates $M$ together with the $d$-dimensional Lebesgue measure on $(S,\Sigma)=(\R^d,\B(\R^d))$. An appropriate choice of integrands will be ensured by two types of deterministic functions: According to \cite{bms} a function $\phi:\R^d \rightarrow \C$ is called \textit{$E$-homogeneous} for some $E \in Q(\R^d)$, if $\phi(c^E x)=c \, \phi(x)$ for all $c>0$ and $x \in \Gamma_d=\R^d \setminus \{0\}$. Moreover, given such an operator $E$ and some $\beta>0$, a function $\phi:\R^d \rightarrow [0,\infty)$ being strictly positive outside the origin is called \textit{$(\beta,E)$-admissible}, if for any $0<A<B$ there exists a constant $C>0$ such that the following implication holds for $A \le \norm{z} \le B$:
\begin{equation*}
\tau_E(x) \le 1 \quad \Rightarrow \quad |\phi(x+z)-\phi(z)| \le C \tau_E(x)^{\beta},
\end{equation*}
where we recall that $x=\tau_E(x)^{E} l_E(x)$ for any $x \ne 0$. See \cite{bms} for more details on $(\beta,E)$-admissible functions and examples. 
\begin{theorem} [Moving-average representation] \label{14061705}
Assume that $D \in Q(\R^m)$ and $E \in Q(\R^d)$ with $q:= \text{trace}(E)$. Furthermore, let $\phi$ be a continuous, $E$-homogeneous and $(\beta,E)$-admissible function for some $\beta>0$ such that $\lambda_{D-qB}+\lambda_{qB}>0$ and $\Lambda_{D-qB} + \Lambda_{qB} < \beta$. Then the following stochastic integral exists for each $t \in \R^d$:
\begin{equation} \label{eq:05061701}
X_{\phi,D}(t):= \integral{\R^d}{}{\left[ \phi(t-s)^{D-qB}- \phi(-s)^{D-qB} \right]}{M(ds)}.
\end{equation}
We call the resulting random field $\mathbb{X}_{\phi,D}:=\{X_{\phi,D}(t):t \in \R^d\}$ an $(\phi,D)$-moving-average representation (with respect to $M$).
\end{theorem}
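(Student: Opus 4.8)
The plan is to fix $t \in \R^d$, set $f_t(s) := \phi(t-s)^{D-qB} - \phi(-s)^{D-qB}$ and $A := D - qB \in \Li{m}$, and to verify that $f_t$ meets the sufficient integrability conditions of \cref{24061707}; the existence of \eqref{eq:05061701} is then immediate. Two elementary facts will be used repeatedly. Since $q = \operatorname{trace}(E) > 0$ we have $\lambda_{qB} = q\lambda_B$ and $\Lambda_{qB} = q\Lambda_B$. Moreover $\phi \asymp \tau_E$ on $\Gamma_d$, because $E$-homogeneity gives $\phi(x) = \tau_E(x)\,\phi(l_E(x))$ while $\phi$ is continuous and strictly positive on the compact sphere $S_E$, so $c_1\tau_E(x) \le \phi(x) \le c_2\tau_E(x)$ for constants $0 < c_1 \le c_2$. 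Finally, $f_t$ is continuous on $\R^d \setminus \{0,t\}$ (a composition of the continuous $\phi$ with the continuous operator exponential $r \mapsto r^A$, $r>0$), hence bounded on any set that stays away from $0$, from $t$ and from infinity; the whole problem is therefore to control $f_t$ near the two singularities $s=0$, $s=t$ and at infinity.

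Near $s=0$ the factor $\phi(t-s)^A$ stays bounded while $\phi(-s) \to 0$, so \eqref{eq:17051701} applied to $A$ together with $\phi \asymp \tau_E$ yields $\norm{f_t(s)} \le C\,\tau_E(s)^{\lambda_A - \delta}$ once $\tau_E(s)$ is small (for every fixed $\delta>0$); the case $s \to t$ is identical with $\tau_E(s)$ replaced by $\tau_E(t-s)$. The decay at infinity is the step I expect to be the main obstacle. Writing $-s = \tau^E\theta$ with $\tau = \tau_E(s)$ large and $\theta = l_E(-s) \in S_E$, $E$-homogeneity gives $\phi(t-s) - \phi(-s) = \tau\,\bigl[\phi(\theta + \tau^{-E}t) - \phi(\theta)\bigr]$. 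Since $\lambda_E>0$ forces $\tau_E(\tau^{-E}t) = \tau^{-1}\tau_E(t) \le 1$ for large $\tau$, and the Euclidean norm of $\theta$ lies in a fixed compact subinterval of $(0,\infty)$ on $S_E$, the $(\beta,E)$-admissibility of $\phi$ applies with $x=\tau^{-E}t$, $z=\theta$ and gives $|\phi(t-s)-\phi(-s)| \le C\,\tau^{1-\beta}$. To pass to the operator difference I use $b^A - a^A = \int_a^b A\,r^{A-I}\,dr$ with $a=\phi(-s)$, $b=\phi(t-s)$ (both comparable to $\tau$) and bound $\norm{r^{A-I}} \le C\,r^{\Lambda_A - 1 + \delta}$ uniformly for $r$ between $a$ and $b$ via \eqref{eq:17051701} (note $\Lambda_{A-I}=\Lambda_A-1$), producing $\norm{f_t(s)} \le C\,\tau^{\Lambda_A - \beta + \delta}$ for all large $\tau$. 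The delicate points are the uniform control of $\norm{r^{A-I}}$ across the $\tau$-dependent interval $[a,b]$ and the bookkeeping of the admissibility constant, which is allowed to depend on the compact range of $\norm{z}$.

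Finally I feed these bounds into \cref{24061707}, using the generalized polar decomposition of Lebesgue measure adapted to $E$, under which $ds$ disintegrates as $\tau^{q-1}\,d\tau$ times a finite angular measure on $S_E$ (see \cite{bms}). For $R$ large the set $\{s:\norm{f_t(s)}>R\}$ lies inside small balls about $0$ and $t$, where $\norm{f_t(s)} \le C\,\tau_E(s)^{\lambda_A-\delta}$, so
\begin{equation*}
\integral{\{\norm{f_t} > R\}}{}{\norm{f_t(s)}^{1/\lambda_B + \delta_2}}{ds} \le C \integral{0}{\tau_1}{\tau^{(\lambda_A - \delta)(1/\lambda_B + \delta_2) + q - 1}}{d\tau},
\end{equation*}
which is finite at $\tau=0$ as soon as $(\lambda_A - \delta)(1/\lambda_B + \delta_2) > -q$, and this holds for small $\delta,\delta_2>0$ precisely because $\lambda_{D-qB} + \lambda_{qB} > 0$. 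On the complementary set the integrand $\norm{f_t}^{1/\Lambda_B - \delta_1}$ is bounded, so only the tail at infinity matters, and there the decay estimate gives
\begin{equation*}
\integral{\{\norm{f_t} \le R\} \cap \{\norm{s} \text{ large}\}}{}{\norm{f_t(s)}^{1/\Lambda_B - \delta_1}}{ds} \le C \integral{\tau_0}{\infty}{\tau^{(\Lambda_A - \beta + \delta)(1/\Lambda_B - \delta_1) + q - 1}}{d\tau},
\end{equation*}
which converges at $\tau=\infty$ because $(\Lambda_A - \beta + \delta)(1/\Lambda_B - \delta_1) < -q$ for small $\delta,\delta_1>0$, equivalently $\Lambda_{D-qB} + \Lambda_{qB} < \beta$. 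Thus both hypotheses of \cref{24061707} hold for suitable $\delta_1,\delta_2>0$ (and with $\delta_1=\delta_2=0$ when $B$ is symmetric), so $f_t \in \mathcal{I}(M)$ and \eqref{eq:05061701} exists for every $t \in \R^d$.
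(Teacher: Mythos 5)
Your proposal is correct and follows essentially the same route as the paper: both arguments reduce existence to the sufficient condition of \cref{24061707}, split $\R^d$ into neighbourhoods of the singularities $s=0,t$ and a far-field region, exploit $(\beta,E)$-admissibility together with $E$-homogeneity to gain the cancellation $|\phi(t-s)-\phi(-s)|\le C\,\tau_E(s)^{1-\beta}$ at infinity, and integrate in generalized polar coordinates with exactly the same exponent arithmetic driven by $\lambda_{D-qB}+\lambda_{qB}>0$ and $\Lambda_{D-qB}+\Lambda_{qB}<\beta$. The only deviations are technical: you pass from the scalar to the operator difference via $b^{A}-a^{A}=\int_a^b A\,r^{A-I_m}\,dr$ at scale $\tau_E(s)$, where the paper instead factors out $\phi(-s)^{D-qB}$ and applies the local bound \eqref{eq:14061702}, and your near-origin estimate should read $\norm{f_t(s)}\le C\bigl(1+\tau_E(s)^{\lambda_{D-qB}-\delta}\bigr)$, since in the regime $\lambda_{D-qB}>\delta$ the difference is merely bounded near $0$ rather than decaying --- harmless, because integrability near the singularities is then trivial and the set $\{\norm{f_t}>R\}$ is empty for large $R$.
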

\begin{proof}
Throughout we define $0^A:=0 \in \Li{m}$ and recall that Lebesgue null-sets are negligible due to Theorem 3.2 in \cite{integral}. We have 
\begin{equation*}
\zeta_1:=\frac{\lambda_{D-qB}+\lambda_{qB}}{\Lambda_B}>0, \quad \zeta_2:=\frac{\Lambda_{D-qB}+\Lambda_{qB}-\beta }{\Lambda_B}<0
\end{equation*}
and moreover we can choose some
\begin{equation*}
0< \delta < \min \left \{ \frac{1}{\Lambda_B}, \frac{\zeta_1}{2 |\lambda_{D-qB}|}, \frac{|\zeta_2|}{2 |\Lambda_{D-qB}-\beta|} \right \}.
\end{equation*}
If $\lambda_{D-qB}=0$ or $\Lambda_{D-qB}=\beta$, the corresponding terms are left out of the minimum. Finally choose $\eps_1,\eps_2>0$ such that
\begin{equation*}
\eps_1< \frac{\zeta_1}{2 (1/ \lambda_B+\delta)}, \quad \eps_2< \min \left \{ \frac{|\zeta_2|}{2(1/ \lambda_B+\delta)}, \Lambda_{qB} \right \}.
\end{equation*}
Obviously we have $X_{\phi,D}(0)=0$ a.s. In view of \cref{24061707} it suffices to show that 
\begin{equation} \label{eq:13061711}
\integral{\R^d}{}{ \left[ \norm{\phi(t-s)^{D-qB}- \phi(-s)^{D-qB}}^{\frac{1}{\Lambda_B}-\delta} +\norm{\phi(t-s)^{D-qB}- \phi(-s)^{D-qB}}^{\frac{1}{\lambda_B}+\delta} \right]}{ds} < \infty
\end{equation}
for any fixed $t \in \Gamma_d$. The occurring constants are denoted by $C_1,C_2,...$ and we write $(\tau(\cdot),l(\cdot)))$ for the polar coordinates with respect to $E$ with $\tau(0):=0$. As in the proof of Theorem 2.5 in \cite{LiXiao} we see that 
\begin{equation} \label{eq:13061705}
m_{\phi} \tau(s) \le \phi(s) \le M_{\phi} \tau(s) \quad \forall s \in \R^d,
\end{equation}
where $m_{\phi}:=\min_{\theta \in S_E} \phi(\theta)>0$ and $M_{\phi}:=\max_{\theta \in S_E} \phi(\theta)>0$. Hence we get  
\begin{equation} \label{eq:13061701}
\norm{\phi(s)^{D-qB}} \le C_1 \norm{\tau(s)^{D-qB}} 
\end{equation}
for all $s \in \R^d$ and for some $C_1>0$. The same argument essentially yields
\begin{equation*}
M_1:= \underset{m_{\phi} \le r \le M_{\phi}}{\max} \norm{r^E} >0, \quad M_2:=\underset{1/M_{\phi} \le r \le 1/m_{\phi}}{\max} \norm{r^E} >0
\end{equation*}
as well as
\begin{equation*}
0 < \upsilon:=\underset{\theta \in S_E}{\min} \norm{\theta} \le \Upsilon := \underset{\theta \in S_E}{\max} \norm{\theta} < \infty.
\end{equation*}
According to \cite{LiXiao} again we observe on one hand that
\begin{equation} \label{eq:13061702}
0< A':= \frac{\upsilon}{M_1} \le \norm{\phi(s)^{-E} s} \le M_2 \Upsilon=:B' \quad \forall s \in \Gamma_d.
\end{equation}
On the other hand the following implication holds for every $x,z \in \R^d$ and some $C_2>0$:
\begin{equation} \label{eq:13061703}
(A' \le \norm{z} \le B' \text{ and } \tau(x) \le 1) \quad \Rightarrow \quad |\phi(x+z)-\phi(z)| \le C_2 \tau(x)^{\beta}.
\end{equation}
Also note that \eqref{eq:13061705} and $\tau(s)=\tau(-s)$ ensure the existence of a $\gamma=\gamma(t)>0$ such that
\begin{equation} \label{eq:13061706}
\phi(s)^{-1} \tau(t)<1, \quad C_2 \phi(-s)^{- \beta}\tau(t)^{\beta}< \frac{1}{2}, \quad  \phi(-s)>1
\end{equation}
remains true for any $s \in \R^d$ with $ \tau(s) > \gamma$, respectively. To show \eqref{eq:13061711} for any $\kappa>0$ we define the bounded set $A(\kappa):=\{s: \tau(s) \le \kappa\}$. Using \eqref{eq:17051701} there exists a constant $C_3=C_3(t)>0$ such that the following estimates hold in virtue of \eqref{eq:13061701} for every $\rho>0$. We also use Proposition 2.3 in \cite{bms} with a corresponding bounded measure $\sigma$ on $S_E$ as well as $\tau(r^E \theta)=r$ for $\theta \in S_E$.
\begin{equation*}
\integral{A(\gamma(t))}{}{\norm{\phi(-s)^{D-qB}}^{\rho}}{ds}  \le C_1^{\rho} C_3^{\rho} \integral{A(\gamma(t))}{}{\tau(s)^{\rho(\lambda_{D-qB}-\eps_1)}}{ds} = C_1^{\rho} C_3^{\rho} \sigma(S_E) \integral{0}{\gamma(t)}{r^{\rho(\lambda_{D-qB}-\eps_1)+q-1}}{dr}.
\end{equation*}
Note that the integral on the right hand side of the above equation is finite for $\rho_1:=1/\Lambda_B-\delta$ and $\rho_2:=1/\lambda_B+\delta$. In fact, using $\text{spec}(qB)=q \text{ spec}(B)$ we get
\allowdisplaybreaks
\begin{equation*}
\rho_1 (\lambda_{D-qB}-\eps_1) +q = \frac{\lambda_{D-qB}+\Lambda_{qB}}{\Lambda_B} - \delta \lambda_{D-qB}- \eps_1 (1/ \Lambda_B-\delta)  \ge \zeta_1 - \delta \lambda_{D-qB}- \eps_1 (1/ \lambda_B+\delta)  >0.
\end{equation*}
Similarly we see that
\begin{equation*}
\rho_2 (\lambda_{D-qB}-\eps_1)+q = \frac{\lambda_{D-qB}+\lambda_{qB}}{\lambda_B} + \delta \lambda_{D-qB}- \eps_1 (1/ \lambda_B+\delta)  \ge \zeta_1 + \delta \lambda_{D-qB}- \eps_1 (1/ \lambda_B+\delta)  >0.
\end{equation*}
Moreover, Lemma 2.2. in \cite{bms} yields a constant $C_4 \ge 1$ with $\tau(x+y) \le C_4 (\tau(x)+\tau(y))$. Hence we observe that $\{s:\tau(t+s) \le \gamma(t)\} \subset \{s:\tau(s) \le C_4(\gamma(t)+\tau(t))\}$. By a change of variable and with $\gamma(t)':=C_4(\gamma(t)+\tau(t))$ this immediately gives 
\begin{equation*}
\integral{A(\gamma(t))}{}{\norm{\phi(t-s)^{D-qB} }^{\rho_i} }{ds}  \le \integral{A(\gamma(t)')}{}{\norm{\phi(-s)^{D-qB} }^{\rho_i} }{ds} < \infty, \quad i=1,2.
\end{equation*}
Because of $(a+b)^{\rho_i} \le C_5 (a^{\rho_i}+b^{\rho_i})$ for $a,b \ge 0$ and some suitable $C_5>0$, this finally proves that the integral in \eqref{eq:13061711} is finite on $A(\gamma(t))$. \\
Next we can argue as in the proof of \cite{LiXiao}, Theorem 2.5 to see that 
\begin{equation} \label{eq:14061702}
\norm{u^{D-qB}-I_m} \le C_6 \norm{D-qB} |u-1| \quad \forall \,1/2 <u< 3/2
\end{equation}
with a suitable constant $C_6>0$. Moreover, we obtain
\begin{equation} \label{eq:14061701}
\norm{\phi(t-s)^{D-qB}-\phi(-s)^{D-qB}} \le  \norm{\phi(\phi(-s)^{-E}t-\phi(-s)^{-E}s)^{D-qB}  -I_m} \cdot \norm{\phi(-s)^{D-qB}}
\end{equation}
for any $s \in \Gamma_d$, since $\phi$ is $E$-homogeneous. Consider $z(s):=- \phi(-s)^{-E}s=\phi(-s)^{-E}(-s)$ and $x(s,t):= \phi(-s)^{-E}t$ to verify in view of \eqref{eq:13061706} and \eqref{eq:13061702} that $\phi(z(s))=1, \tau(x(s,t))<1$ as well as $\norm{z(s)} \in [A',B']$ for any $s \in A(\gamma(t))^c$. Thus we can apply \eqref{eq:13061703} to get
\begin{equation*}
|\phi(\phi(-s)^{-E}t-\phi(-s)^{-E}s)-1| \le C_2 \tau(\phi(-s)^{-E}t)^{\beta}= C_2 \tau(t)^{\beta} \phi(-s)^{- \beta} 
\end{equation*}
for all $s \in A(\gamma(t))^c$. Particularly, we have $1/2 < u(s,t):=\phi(\phi(-s)^{-E}t-\phi(-s)^{-E}s) < 3/2$. Together with \eqref{eq:14061702} and \eqref{eq:14061701} this yields for $s \in A(\gamma(t))^c$ and any $\rho>0$ that
\allowdisplaybreaks
\begin{align*}
\norm{\phi(t-s)^{D-qB}-\phi(-s)^{D-qB}} ^{\rho} & \le C_6^{\rho} C_2^{\rho} \norm{D-qB}^{\rho} \tau(t)^{\rho \beta} \phi(-s)^{- \rho \beta}  \norm{\phi(-s)^{D-qB}}^{\rho} \\
& \le C_7^{\rho} C_6^{\rho} C_2^{\rho} \norm{D-qB}^{\rho} \tau(t)^{\rho \beta} \phi(-s)^{ \rho (\Lambda_{D-qB} + \eps_2-\beta)} 
\end{align*}
with some suitable $C_7>0$ based on \eqref{eq:17051701}. Observe that $\Lambda_{D-qB} + \eps_2-\beta<0$. Hence using \eqref{eq:13061705} and setting $C_8:=C_7 C_6 C_2 \norm{D-qB} m_{\phi}^{\Lambda_{D-qB}+\eps_2-\beta}$ we can compute that
\begin{equation*}
\integral{A(\gamma(t))^c}{}{\norm{\phi(t-s)^{D-qB}-\phi(-s)^{D-qB}} ^{\rho}}{ds} \le C_{8}^{\rho} \tau(t)^{\rho \beta} \sigma(S_E) \integral{\gamma(t)}{\infty}{r^{\rho (\Lambda_{D-qB} + \eps_2-\beta)+q-1}}{dr}
\end{equation*}
by using Proposition 2.3 in \cite{bms} again. Overall this shows that \eqref{eq:13061711} holds, since 
\allowdisplaybreaks
\begin{align*}
\rho_1(\Lambda_{D-qB}+\eps_2-\beta)+q & = \frac{\Lambda_{D-qB} + \Lambda_{qB}- \beta}{\Lambda_B} - \delta(\Lambda_{D-qB}-\beta)+ \eps_2 (1/ \Lambda_B- \delta)  \\
& \le \zeta_2 - \delta(\Lambda_{D-qB} - \beta)+ \eps_2 (1/ \lambda_B + \delta) \\
& <0
\end{align*}
as well as
\allowdisplaybreaks
\begin{align*}
\rho_2(\Lambda_{D-qB}+\eps_2-\beta)+q & = \frac{\Lambda_{D-qB} + \lambda_{qB}- \beta}{\lambda_B} + \delta(\Lambda_{D-qB}-\beta)+ \eps_2 (1/ \lambda_B+ \delta) \\
& \le  \frac{\Lambda_{D-qB} + \Lambda_{qB}- \beta}{\lambda_B} + \delta(\Lambda_{D-qB}-\beta)+ \eps_2 (1/ \lambda_B+ \delta) \\
& \le  \zeta_2 + \delta(\Lambda_{D-qB}-\beta)+ \eps_2 (1/ \lambda_B+ \delta) \\
& <0.
\end{align*}
\end{proof}
The following proposition provides some basic properties of the random field $\mathbb{X}_{\phi,D}$ constructed by means of \eqref{eq:05061701}. In fact, part (a) states that under an additional condition $\mathbb{X}_{\phi,D}$ is an example of an $(E,D)$-OSS random field that is also operator-stable with exponent $B$.
\begin{prop} \label{27061705}
Assume that the assumptions of \cref{14061705} are fulfilled. 
\begin{itemize}
\item[(a)] If $DB=BD$, then $\mathbb{X}_{\phi,D}$ is $(E,D)$-OSS and strictly operator-stable with exponent $B$.
\item[(b)] $\mathbb{X}_{\phi,D}$ is stochastically continuous and has stationary increments. 
\item[(c)] If $D-qB \in$ GL$(\R^m)$, then $\mathbb{X}_{\phi,D}$ is full.
\end{itemize}
\end{prop}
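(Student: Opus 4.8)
Throughout I write $X_{\phi,D}(t)=I_M(f_t)$ with $f_t(s):=\phi(t-s)^{D-qB}-\phi(-s)^{D-qB}$, and I exploit that each $f_t(s)$ is a convergent power series in the single operator $D-qB$. In part (a) the hypothesis $DB=BD$ gives $(D-qB)B=B(D-qB)$, hence every such power series, in particular each $f_t(s)$, commutes with $B$; passing to adjoints, $f_t(s)^{*}$ commutes with $B^{*}$ and with $r^{\pm qB^{*}}$ for all $r>0$. I shall use the log-characteristic formula $\erwartung{e^{\im\skp{I_M(f)}{u}}}=\exp(\int_{\R^d}\psi(f(s)^{*}u)\,ds)$ and its joint version $\erwartung{\exp(\im\sum_j\skp{X_{\phi,D}(t_j)}{u_j})}=\exp(\int_{\R^d}\psi(\sum_j f_{t_j}(s)^{*}u_j)\,ds)$, which follows by viewing $(X_{\phi,D}(t_1),\dots,X_{\phi,D}(t_k))$ as one stochastic integral against $M$ with integrand $s\mapsto(f_{t_1}(s),\dots,f_{t_k}(s))$, as permitted by \cite{integral}. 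Strict operator-stability of $\mu$ enters only through the shift-free case of \eqref{eq:18051711}, namely $\psi(\sigma^{B^{*}}v)=\sigma\,\psi(v)$ for all $\sigma>0$ and $v\in\R^m$.

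For part (a) I would first prove the $(E,D)$-OSS relation by a change of variables. Substituting $s=r^{E}s'$ (Jacobian $r^{q}$) in the log-characteristic integral of $X_{\phi,D}(r^{E}t)$ and using the $E$-homogeneity $f_{r^{E}t}(r^{E}s')=r^{D-qB}f_t(s')$ turns the exponent into $r^{q}\int\psi(f_t(s')^{*}r^{(D-qB)^{*}}u)\,ds'$. Writing $r^{(D-qB)^{*}}=r^{-qB^{*}}r^{D^{*}}$ and moving $r^{-qB^{*}}$ across $f_t(s')^{*}$ by the commutation above, the scaling relation $\psi(r^{-qB^{*}}\cdot)=r^{-q}\psi(\cdot)$ cancels the Jacobian and leaves $\int\psi(f_t(s')^{*}r^{D^{*}}u)\,ds'$, which is the log-characteristic function of $r^{D}X_{\phi,D}(t)$. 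The same computation carried out with $\sum_j$ gives $\{X_{\phi,D}(r^{E}t)\}\overset{fdd}{=}\{r^{D}X_{\phi,D}(t)\}$, and since $\mu$ is strict no shift appears, so $\mathbb{X}_{\phi,D}$ is $(E,D)$-OSS. For operator-stability, each linear combination $\sum_j c_j X_{\phi,D}(t_j)=I_M(\sum_j c_j f_{t_j})$ is a single integral whose integrand commutes with $B$, so \cref{20061701}(b) makes it strictly operator-stable with exponent $B$; applying the same argument to the stacked integrand $(f_{t_1},\dots,f_{t_k})$, which commutes with the block exponent $B\oplus\cdots\oplus B$, shows every finite-dimensional distribution is strictly operator-stable. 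Thus $B$ is an exponent of $\mathbb{X}_{\phi,D}$.

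For part (b), stationary increments are immediate: the control measure is Lebesgue measure, which is translation invariant, so $M_h(\cdot):=M(\cdot+h)$ has the same law as $M$; the substitution $s\mapsto s+h$ then gives $X_{\phi,D}(t+h)-X_{\phi,D}(h)=\int[\phi(t-s)^{D-qB}-\phi(-s)^{D-qB}]\,M_h(ds)$ simultaneously for all $t$, whence $\{X_{\phi,D}(t+h)-X_{\phi,D}(h):t\}\overset{fdd}{=}\{X_{\phi,D}(t):t\}$. For stochastic continuity I would reduce, via these stationary increments, to showing $X_{\phi,D}(h)\to0$ in probability as $h\to0$; since convergence in distribution to the constant $0$ suffices, it is enough that $\int_{\R^d}\psi(f_h(s)^{*}u)\,ds\to0$ for each $u$. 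The integrand tends to $\psi(0)=0$ pointwise by continuity, and dominated convergence applies once a $\Theta$-integrable majorant is found. Producing this majorant is the one real obstacle here: I would revisit the proof of \cref{14061705}, whose bounds on $A(\gamma(t))$ and $A(\gamma(t))^{c}$ control $\norm{f_t(s)}^{\rho_1}+\norm{f_t(s)}^{\rho_2}$ by quantities monotone in $\tau(t)$, and take the supremum over $\tau(h)\le1$ to obtain a single integrable bound valid for all small $h$; combined with $|\psi(v)|\le C(\norm{v}^{1/\Lambda_B-\delta}+\norm{v}^{1/\lambda_B+\delta})$ from \eqref{eq:18051701}--\eqref{eq:18051702}, this dominates $|\psi(f_h(s)^{*}u)|$.

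For part (c) I would use \cref{20061701}(a) and exhibit a positive-measure set on which $f_t$ is invertible, for each fixed $t\neq0$. Since its two summands are powers of $D-qB$ they commute, giving the factorization $f_t(s)=\phi(-s)^{D-qB}[\lambda(s)^{D-qB}-I_m]$ with $\lambda(s):=\phi(t-s)/\phi(-s)>0$. The factor $\phi(-s)^{D-qB}$ is always invertible, so $f_t(s)\in\GL(\R^m)$ exactly when $1\notin\operatorname{spec}(\lambda(s)^{D-qB})$, i.e. when $\lambda(s)\notin\mathcal{B}:=\{c>0:\exp((\ln c)\mu)=1\text{ for some }\mu\in\operatorname{spec}(D-qB)\}$. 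As $D-qB$ is invertible, an eigenvalue $\mu=\alpha+\im\beta$ contributes to $\mathcal{B}$ only through $c=1$ when $\alpha\neq0$, or through the geometric sequence $c=e^{2\pi k/\beta}$, $k\in\Z$, when $\alpha=0\neq\beta$; hence $\mathcal{B}$ is a countable, locally finite (so closed) subset of $(0,\infty)$ and $\mathcal{B}^{c}$ is open and dense. Noting $\phi(0)=0$, the continuous map $\lambda$ satisfies $\lambda(s)\to0$ as $s\to t$ and $\lambda(s)\to\infty$ as $s\to0$, so its range contains a nondegenerate interval; consequently $\lambda^{-1}(\mathcal{B}^{c})$ is a nonempty open set, of positive Lebesgue measure, on which $f_t$ is invertible, and \cref{20061701}(a) yields fullness of $X_{\phi,D}(t)$. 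As this holds for every $t\neq0$, $\mathbb{X}_{\phi,D}$ is full. The delicate points I anticipate are the uniform domination in (b) and confirming that the invertibility locus in (c) genuinely carries positive measure.
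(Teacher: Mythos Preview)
Your treatment of (a), and of stationary increments in (b), is correct and matches the paper's approach; your phrasing via the commutation $f_t(s)B=Bf_t(s)$ and the shifted measure $M_h$ is slightly more conceptual but equivalent. Your argument for (c) is also correct, and in fact tighter than the paper's: the paper reduces to the same condition $1\notin\{\lambda(s)^{\mu}:\mu\in\operatorname{spec}(D-qB)\}$ but then asserts it holds for all $s\notin\{0,t\}$, which tacitly ignores purely imaginary eigenvalues. Your analysis of the exceptional set $\mathcal{B}\subset(0,\infty)$ as countable and closed, together with the intermediate-value argument showing the range of $\lambda$ contains an interval, handles the general case cleanly.

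There is, however, a genuine gap in your stochastic-continuity argument. The bounds from the proof of \cref{14061705} on the inner region $A(\gamma(t))$ are \emph{not} monotone in $\tau(t)$: they contain the term $\tau(t-s)^{\rho_j(\lambda_{D-qB}-\varepsilon_1)}$, which has a singularity at $s=t$ whenever $\lambda_{D-qB}-\varepsilon_1<0$. This is not a corner case---in the paper's own \cref{28061710} with $d\ge2$ one has $D-qB=(c-q)B$ with $c<q$, hence $\lambda_{D-qB}<0$. Since the singularity moves with $t$, the pointwise supremum $\sup_{\tau(h)\le1}\tau(h-s)^{\rho_j(\lambda_{D-qB}-\varepsilon_1)}$ equals $+\infty$ on the whole set $\{\tau(s)\le1\}$, so no single integrable majorant exists. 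The paper's remedy is to fix $\gamma=\gamma(t_{n^*})$ along a given null sequence $(t_n)$, accept a \emph{sequence} of majorants $g_n(s)$ still carrying the moving singularity, and invoke the generalized dominated convergence theorem (section~4, Theorem~19 in \cite{RoyFi}): one checks $g_n\to g$ pointwise and $\int g_n\to\int g$, the latter via the substitution $s\mapsto s-t_n$, which relocates the singularity to the origin and reduces matters to ordinary dominated convergence on a shifted but uniformly bounded domain. You correctly flagged this step as delicate, but the specific mechanism you propose would fail; the variable-majorant device is the missing ingredient.
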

\begin{proof}
For part (a) let $k \in \N$ and $t_1,...,t_k \in \R^d$ be arbitrary. Then in view of Theorem 5.4 (b) in \cite{integral} the characteristic function of $(X_{\phi,D}(t_1),...,X_{\phi,D}(t_k))$ is given by
\begin{equation*}
\R^{k \cdot m} \ni u=(u_1,...,u_k) \mapsto \exp \left( \, \integral{\R^d}{}{ \psi \left( \sum_{j=1}^{k} (\phi(t_j-s)^{D-qB}-\phi(-s)^{D-qB})^{*}u_j \right) }{ds} \right).
\end{equation*}
Moreover we obtain by using \eqref{eq:18051711} that
\begin{equation*}
\psi \left( \sum_{j=1}^{k} (\phi(t_j-s)^{D-qB}-\phi(-s)^{D-qB})^{*}r^{B^{*}}u_j \right)= r \cdot \psi \left( \sum_{j=1}^{k} (\phi(t_j-s)^{D-qB}-\phi(-s)^{D-qB})^{*}u_j \right)
\end{equation*}
for any $r>0, u=(u_1,...,u_n) \in \R^{k \cdot m}$ and $s \in S$ since $B$ and $D-qB$ commutate under the given assumption. With $B_k:=B \oplus \cdots \oplus B$ and in view of $r^{B_k^{*}}u=(r^{B^{*}}u_1, ...,r^{B^{*}}u_k)$ this shows that the considered random vector is strictly operator-stable with exponent $B_k$. Using \cref{20061701} it then follows easily that $\mathbb{X}_{\phi,D}$ is an operator-stable random field with exponent $B$. Quite similar $DB=BD$, a change of variable and the $E$-homogeneity of $\phi$ yield 
\allowdisplaybreaks
\begin{align*}
& \bigerwartung{\text{e}^{\im \sum_{j=1}^{k} \skp{X_{\phi,D}(c^E t_j)}{u_j}} } \\
&= \exp \left( \, \integral{\R^d}{}{ \psi \left( \sum_{j=1}^{k} (c^{D-qB}( \phi(t_j-c^{-E}s)^{D-qB}-\phi(-c^{-E}s)^{D-qB}))^{*}u_j \right) }{ds} \right) \\
&= \exp \left( \, \integral{\R^d}{}{c^{q} \cdot \psi \left( \sum_{j=1}^{k} (c^{D-qB}( \phi(t_j-s)^{D-qB}-\phi(-s)^{D-qB}))^{*}u_j \right) }{ds} \right) \\
&= \exp \left( \, \integral{\R^d}{}{ \psi \left( \sum_{j=1}^{k} (c^{D-qB}( \phi(t_j-s)^{D-qB}-\phi(-s)^{D-qB})c^{qB})^{*}u_j \right) }{ds} \right) \\
&=  \bigerwartung{\text{e}^{\im \sum_{j=1}^{k} \skp{c^D X_{\phi,D}( t_j)}{u_j}} }
\end{align*}
for any $c>0$. Hence $\mathbb{X}_{\phi,D}$ is $(E,D)$-OSS. Concerning part (b) we first consider $t_1,...,t_k \in \R^d$ again as well as $h \in \R^d$ arbitrary. Define 
\begin{equation*}
Y_j:= \integral{\R^d}{}{[\phi(t_j+h-s)^{D-qB}-\phi(h-s)^{D-qB}]}{M(ds)}
\end{equation*}
and observe that $(Y_1,...,Y_k) \overset{d}{=} (X_{\phi,D}(t_1),...,X_{\phi,D}(t_k) )$ by change of variables and the use of characteristic functions as before. In light of $Y_j=X_{\phi,D}(t_j+h)-X_{\phi,D}(h)$, which holds a.s. by linearity, this gives the stationarity of increments since $X_{\phi,D}(0)=0$. Now fix some $t_0 \in \R^d$ and $u \in \R^m$ to observe that the stochastic continuity of $\mathbb{X}$ would follow in view of Theorem 5.4 (c) in \cite{integral} and after a change of variable, if we can show that
\begin{equation} \label{eq:20061702}
\integral{\R^d}{}{ \psi \left( (\phi(t-s)^{D-qB}-\phi(-s)^{D-qB})^{*}u\right) }{ds} \rightarrow 0 \quad (t \rightarrow 0).
\end{equation} 
Obviously the integrand converges to $\psi(0)=0$ almost everywhere by continuity. On the other hand we can argue similar as in the proof of \cref{24061707} to verify that there exists a constant $K>0$ (sufficiently large and only depending on $u$) such that the previous integrand is bounded by $K \sum_{j=1}^2 \norm{\phi(t-s)^{D-qB}-\phi(-s)^{D-qB}}^{\rho_j} $, where $\rho_1,\rho_2>0$ are chosen as in the proof of \cref{14061705} above. There we also proved that
\allowdisplaybreaks 
\begin{align}
& \norm{\phi(t-s)^{D-qB}-\phi(-s)^{D-qB}}^{\rho_j} \notag \\
& \le C_5 \left( C_1^{\rho_j} \tilde{C}_{3}^{\rho_j} \tau(t-s)^{\rho_j(\lambda_{D-qB}-\eps_1)} + C_1^{\rho_j} C_{3}^{\rho_j} \tau(s)^{\rho_j(\lambda_{D-qB}-\eps_1)}  \right) \indikator{A(\gamma(t))}{s} \notag \\
& \qquad + C_8^{\rho_j} \tau(s)^{\rho_j(\Lambda_{D-qB}+\eps_2-\beta)} \tau(t)^{\rho_j \beta } \indikator{A(\gamma(t))^c}{s} \label{eq:20061706}
\end{align}
for $j=1,2$. Here all constants are as in the proof of \cref{14061705}, except for $\tilde{C}_3=\tilde{C}_3(t)$ which we can choose such that $\norm{\tau(t-s)^{D-qB}} \le \tilde{C}_3 \tau(t-s)^{\lambda_{D-qB}-\eps_1}$ holds for all $s \in A(\gamma(t)^c)$ again. Then for any null sequence $(t_n)$ there exists some $n^{*}$ with $\tau(t_n)\le \tau(t_{n^{*}})$ for all $n \in \N$. Hence in view of \eqref{eq:13061706} we can assume without loss of generality that $\gamma(t_n)=\gamma(t_{n^{*}})$ for each $n \in \N$. Moreover, $C_3$ and $\tilde{C}_3$ were the only constants in \eqref{eq:20061706} depending on $t$. Since $\sup \{\tau(t_n-s): s \in A(\gamma(t_{n^{*}})), n \in \N \}<\infty$, we can assume that they are also independent of $t$. Overall there is a constant $C'>0$  (only depending on $u$) such that
\allowdisplaybreaks
\begin{align}
& \left| \psi \left( (\phi(t_n-s)^{D-qB}-\phi(-s)^{D-qB})^{*}u\right) \right| \notag \\
& \quad \le C' \summe{j=1}{2} \left[  \tau(t_n-s)^{\rho_j(\lambda_{D-qB}-\eps_1)}  \indikator{A(\gamma(t_{n^{*}}))}{s} \right. \notag \\
& \qquad \qquad +  \tau(s)^{\rho_j(\lambda_{D-qB}-\eps_1)}  \indikator{A(\gamma(t_{n^{*}}))}{s} \notag \\
& \left. \qquad \qquad +  \tau(s)^{\rho_j(\Lambda_{D-qB}+\eps_2-\beta)} \tau(t_n)^{\rho_j \beta} \indikator{A(\gamma(t_{n^{*}}))^c}{s} \right]  =: g_n(s) \notag
\end{align}
for any $s \in \R^d$. Because of $\tau(0)=0,\tau(s)=\tau(-s)$ and the continuity of $\tau(\cdot)$ we obtain
\begin{equation*}
g_n(s) \rightarrow 2 C' \summe{j=1}{2} \tau(-s)^{\rho_j(\lambda_{D-qB}-\eps_1)} \indikator{A(\gamma(t_{n^{*}}))}{s}=:g(s), \quad s \in \R^d.
\end{equation*} 
However, the proof of \cref{14061705} showed that the functions $g,g_1,...$ are integrable. In view of section 4, Theorem 19 in \cite{RoyFi} $\int_{\R^d} g_n(s) \, ds \rightarrow \int_{\R^d} g(s) \, ds$ would finally imply \eqref{eq:20061702}. Therefore it remains to show that
\allowdisplaybreaks
\begin{align}
\integral{\R^d}{}{ \tau(-s)^{\rho_j(\lambda_{D-qB}-\eps_1)} \indikator{A(\gamma(t_{n^{*}}))}{s} }{ds} &= \limes{n}{\infty} \integral{\R^d}{}{\tau(t_n-s)^{\rho_j(\lambda_{D-qB}-\eps_1)} \indikator{A(\gamma(t_{n^{*}}))}{s}}{ds} \label{eq:08111701} \\
&= \limes{n}{\infty} \integral{\R^d}{}{ \tau(-s)^{\rho_j(\lambda_{D-qB}-\eps_1)} \indikator{\{s: \tau(t_n+s) \le \gamma(t_{n^{*}})\} }{s}}{ds} \notag
\end{align}
for $j=1,2$. Observe that 
\begin{equation*}
\tau(-s)^{\rho_j(\lambda_{D-qB}-\eps_1)} \indikator{\{s: \tau(t_n+s) \le \gamma(t_{n^{*}})\} }{s} \rightarrow \tau(-s)^{\rho_j(\lambda_{D-qB}-\eps_1)} \indikator{A(\gamma(t_{n^{*}}))}{s}
\end{equation*}
as $n \rightarrow \infty$, at least except for the Lebesgue null set $\{s:\tau(s)=\gamma(t_{n^{*}})\}$. Moreover, we have already seen that $\{s: \tau(t_n+s) \le \gamma(t_{n^{*}})\} \subset \{s:\tau(s) \le C_4 (\gamma(t_{n^{*}}) +\tau(t_{n^{*}}))  \}$. Then dominated convergence shows that \eqref{eq:08111701}  holds. \\
For the proof of part (c) fix any $t \in \Gamma_d$. Using \eqref{eq:14061701} we see that
\begin{equation*}
\text{det} ( \phi(t-s)^{D-qB}-\phi(-s)^{D-qB})=\text{det}(\phi(\phi(-s)^{-E}(t-s))^{D-qB}-I_m)\cdot \text{det}(\phi(-s)^{D-qB}).
\end{equation*}
Since $\text{det}(\phi(-s)^{D-qB}) \ne 0$ for $s \ne 0$, \cref{20061701} (a) yields the assertion as long as
\begin{equation*}
0 \notin \sigma( \phi(t-s)^{D-qB}-\phi(-s)^{D-qB}) \quad \Leftrightarrow \quad 1 \notin \{\phi(\phi(-s)^{-E} (t-s))^{\lambda}: \lambda \in \sigma (D-qB)\}
\end{equation*} 
which is true for $s \notin \{0,t\}$ under the given assertion.
\end{proof}
\begin{example} \label{28061710}
For $\mu$ as before the assumptions of \cref{14061705} can always be fulfilled: Consider $E:=\text{diag}(a_1,...,a_d)$ with $a_1,...,a_d \ge 1$ and $D= c B$ for any $0<c < 1 / \Lambda_B$ such that $c \ne q=trace(E)$. Then we can use $\phi:\R^d \rightarrow [0,\infty)$, defined by $(x_1,...,x_d) \mapsto \sum_{j=1}^d |x_j|^{1/a_j}$. In particular we have $DB=BD$, where $\phi$ is $(1,E)$-admissible in the light of Corollary 2.12 in \cite{bms}. Note also that $d \ge 2$ is sufficient for $D-qB \in$ GL$(\R^m)$ in this example. 
\end{example}
\begin{remark} \label{27061703}
In view of Theorem 7 in \cite{HuMa82} (for $d=1$) the previous example, particularly the choice $D=cB$, should not appear unnatural. Additionally, we also get back the multivariate processes in \cite{MaMa92} with $d=1$ and $\phi(\cdot)=|\cdot|$, although their construction differs from ours. However, we should emphasize that the assumptions in \cref{14061705} coincide with those in \cite{LiXiao}, if $\mu$ is (rotationally symmetric) $\alpha$-stable. After all Proposition 6.1 in \cite{BiLa09} already suggests that every modification of $\mathbb{X}_{\phi,D}$ should have paths that are discontinuous a.s., at least for $d \ge 2$. But a rigorous proof of this conjecture would probably require an precise multivariate extension of chapter 10 in \cite{SaTaq94} for the operator-stable case.
\end{remark}
\section{Harmonizable representation} \label{chapter5}
In this section we want to generalize the random fields that are given by \eqref{eq:16051702} and its multivariate counterparts in \cite{LiXiao}. Therefore we have to replace $\mu$ by a full, strictly operator-stable distribution $\widetilde{\mu} \sim [\tilde{\gamma},\tilde{Q},\tilde{\nu}]$ on $\R^{2m}$ with log-characteristic function $\widetilde{\psi}$. Furthermore, we assume that an exponent of $\widetilde{\mu}$ is given by $B \oplus B$ for some $B \in \Li{m}$. Then the resulting ISRM $M$ (generated by $\widetilde{\mu}$ and the $d$-dimensional Lebesgue measure) can be identified with an $\C^m$-valued ISRM $\widetilde{M}$. Recall the notation in \cite{integral}, especially the mapping $\Xi(z)=(\text{Re }z,\text{Im }z ), z \in \C^m$ as well as Remark 3.8 there. Then we see that $\Xi(\widetilde{M})=M$, i.e. $M$ is the \textit{real associated} ISRM of $\widetilde{M}$. \\
Also note from section 5 in \cite{integral} that for functions $f:\R^d \rightarrow \text{L}(\C^m)$ integrals of the form $\text{Re } \int_{\R^d} f(s) \widetilde{M}(ds)$ can be defined in a reasonable way, even if $f \notin \mathcal{I}(\widetilde{M})$. That means that the integral $ \int_{\R^d} f(s) \widetilde{M}(ds)$ itself may not exist. Thus, there is a difference in general and we say that $f$ is partially integrable w.r.t $\widetilde{M}$, if $\text{Re } \int_{\R^d} f(s) \widetilde{M}(ds)$ exists at least.
\begin{theorem}[Harmonizable representation] \label{27061701}
Assume that $D \in Q(\R^m)$ and $E \in Q(\R^d)$ with $q:= \text{trace}(E)$ such that $\lambda_E> \Lambda_D$. Furthermore, let $\phi:\R^d \rightarrow [0,\infty)$ be a continuous and $E^{*}$-homogeneous function with $\phi(x)>0$ for $x \ne 0$. Then we have that the following stochastic integral exists for each $t \in \R^m$ in the partial sense:
\begin{equation} \label{eq:27061702}
\widetilde{X}_{\phi,D}:= \text{Re } \integral{\R^d}{}{\left( \text{e}^{\im \skp{t}{s}  }-1\right) \phi(s)^{-D} \phi(s)^{-qB}  }{\widetilde{M}(ds)}.
\end{equation}
We call the resulting random field $\widetilde{\mathbb{X}}_{\phi,D}:=\{\widetilde{X}_{\phi,D}(t):t \in \R^d\}$ a $(\phi,D)$-harmonizable representation (with respect to $\widetilde{M}$).
\end{theorem}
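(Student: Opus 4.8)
The plan is to reduce the claimed partial integrability to the real-valued criterion of \cref{24061707}, applied through the real associated picture of \cref{29061701} and Remark 5.12 in \cite{integral}. Writing $f_t(s):=(\text{e}^{\im \skp{t}{s}}-1)\phi(s)^{-D}\phi(s)^{-qB}$ and recalling that the exponent $\widetilde{B}=B\oplus B$ satisfies $\lambda_{\widetilde B}=\lambda_B$ and $\Lambda_{\widetilde B}=\Lambda_B$, the existence of $\text{Re}\,\int_{\R^d} f_t(s)\,\widetilde M(ds)$ will follow once I verify, for every $u$, that $\int_{\R^d}|\widetilde\psi(\widehat{f_t}(s)^{*}u)|\,ds<\infty$ together with the continuity condition of \cref{24061707} (iii); here $\widehat{f_t}(s)$ denotes the real associated operator of $f_t(s)$ on $\R^{2m}$. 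As in the proof of \cref{24061707}, the continuity condition will be handled by dominated convergence once a suitable integrable majorant is produced, so the heart of the matter is the finiteness of the $\widetilde\psi$-integral.

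The decisive observation is that $\phi$ is scalar valued, so $\phi(s)^{-qB}=(\phi(s)^{-q})^{B}$ is a genuine scalar power of the exponent. Consequently, on $\R^{2m}$ one has $\widehat{f_t}(s)^{*}u=(\phi(s)^{-q})^{\widetilde B^{*}}\,v(s)$, where $v(s)$ collects the remaining factors $(\text{e}^{\im\skp{t}{s}}-1)$ and $\phi(s)^{-D}$ applied to $u$. Since $\widetilde\mu$ is strictly operator-stable, the scaling relation \eqref{eq:18051711} with $a_{r}=0$, applied to $r=\phi(s)^{-q}$, yields the clean identity $\widetilde\psi(\widehat{f_t}(s)^{*}u)=\phi(s)^{-q}\,\widetilde\psi(v(s))$. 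This is the step that makes only the hypothesis $\lambda_E>\Lambda_D$ necessary: a crude bound on $\norm{\phi(s)^{-qB}}$ via \eqref{eq:17051701} would cost the spectral spread $\Lambda_B-\lambda_B$ and force a stronger requirement like $\lambda_D>q(\Lambda_B-\lambda_B)$, whereas pulling the scalar $\phi(s)^{-q}$ out of $\widetilde\psi$ avoids this loss entirely.

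From here I would pass to polar coordinates $(\tau(\cdot),l(\cdot))$ with respect to $E^{*}$ and use $E^{*}$-homogeneity together with continuity and strict positivity of $\phi$ on the compact sphere $S_{E^{*}}$ to obtain $m_\phi\,\tau(s)\le\phi(s)\le M_\phi\,\tau(s)$, exactly as in \eqref{eq:13061705}. Then $\phi(s)^{-q}\asymp\tau(s)^{-q}$ cancels the Jacobian $\tau^{q-1}\,d\tau$ of the polar decomposition, leaving a radial density $\tau^{-1}\,d\tau$. For the remaining factor I bound $\norm{v(s)}\le|\text{e}^{\im\skp{t}{s}}-1|\,\norm{\phi(s)^{-D}}\,\norm{u}$, estimate $|\text{e}^{\im\skp{t}{s}}-1|\le\min\{2,\norm{t}\,\norm{s}\}$, and control $\norm{s}$ and $\norm{\phi(s)^{-D}}$ through \eqref{eq:18051701}--\eqref{eq:18051702} and \eqref{eq:17051701}. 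This gives $\norm{v(s)}\le C\,\tau(s)^{\lambda_E-\Lambda_D-\eps}$ near the origin and $\norm{v(s)}\le C\,\tau(s)^{-\lambda_D+\eps}$ at infinity, so $v(s)\to 0$ in both tails and there $|\widetilde\psi(v(s))|\le C\,\norm{v(s)}^{1/\Lambda_B-\delta}$. The radial integrals then reduce to $\int_{0}^{\gamma}\tau^{(\lambda_E-\Lambda_D)/\Lambda_B-1-\eps'}\,d\tau$ and $\int_{\gamma}^{\infty}\tau^{-\lambda_D/\Lambda_B-1+\eps'}\,d\tau$, which converge precisely because $\lambda_E>\Lambda_D$ and $\lambda_D>0$; the intermediate annulus contributes a finite amount since $\widetilde\psi$ is bounded on compacta and $v(s)$ stays bounded there.

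I expect the main obstacle to be exactly the structural identity $\widetilde\psi(\widehat{f_t}(s)^{*}u)=\phi(s)^{-q}\widetilde\psi(v(s))$, i.e.\ recognizing that the scalar nature of $\phi$ lets $\phi(s)^{-qB}$ be absorbed into $\widetilde\psi$ as the factor $\phi(s)^{-q}$ that cancels the polar Jacobian. Secondary technical care is needed in the reduction to the real associated, partially integrable setting (so that \cref{24061707} is applicable in the $\C^{m}$-valued, real-part sense), in checking that $\norm{v(s)}$ is small in both tails so that the $1/\Lambda_B$-exponent estimate for $\widetilde\psi$ is the relevant one, and in verifying the continuity condition of \cref{24061707} (iii) by the dominated-convergence argument already used there.
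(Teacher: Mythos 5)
Your proposal is correct and follows essentially the same route as the paper's proof: the same reduction via the real associated ISRM to the integrability criterion of \cref{24051701} (iii) (which you mislabel as \cref{24061707}, a theorem with no part (iii) -- a citation slip, not a gap), the same key identity $\widetilde\psi(f_t(s)^{*}u)=\phi(s)^{-q}\,\widetilde\psi(v(s))$ exploiting strict operator-stability of $\widetilde\mu$ to avoid any loss from $\Lambda_B-\lambda_B$, and the same polar-coordinate cancellation of the Jacobian with tail estimates yielding convergence under $\lambda_E>\Lambda_D$ and $\lambda_D>0$. The only cosmetic difference is that you bound $|\widetilde\psi|$ by the single exponent $1/\Lambda_B-\delta$ on the two tails (where $v(s)\to 0$) plus a compact-annulus argument, whereas the paper uses the global two-exponent bound $|\widetilde\psi(x)|\le C(\norm{x}^{\rho_1}+\norm{x}^{\rho_2})$ throughout; both verifications succeed under identical hypotheses.
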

Differently from the moving-average representation, we observe that the condition $\lambda_E > \Lambda_D$ (independent of $B$) is identical to the corresponding one for the $S \alpha S$ case in \cite{LiXiao}. Moreover, in the $\alpha$-stable case we have $B=(1/ \alpha) I_m$ and hence \eqref{eq:27061702} is then equal the harmonizable representation in Theorem 2.6 in \cite{LiXiao}, since $D$ commutates with $I_m$. However, in general $\phi(s)^{-D} \phi(s)^{-qB} \ne \phi(s)^{-D-qB}$, but our method of proof requires that we use the form of the integrand as in \eqref{eq:27061702}. In fact, using $\phi(s)^{-D-qB}$ instead requires that $\Lambda_B-\lambda_B$ is very small which is a rather strong condition on $B$.  
\begin{proof} [Proof of \cref{27061701}]
Let $(\tau(\cdot),l(\cdot))$ denote the polar coordinates w.r.t. $E^{*}$. Observe that as in the proof of \cref{14061705} we have $m_{\phi}:=\min_{\theta \in S_{E^{*}}} \phi(\theta)>0$ as well as $M_{\phi}:=\max_{\theta \in S_{E^{*}}} \phi(\theta)>0$ and that $m_{\phi} \tau(s) \le \phi(s) \le M_{\phi \tau(s)}$ for every $s \in \R^d$. Fix any $t \ne 0$. In view of Proposition 5.10 in \cite{integral} it suffices to show that $f_t$, defined by 
\begin{equation*} 
f_t(s):= \begin{pmatrix}  (\cos \skp{t}{s}-1)  \phi(s)^{-D} \phi(s)^{-qB} & - \sin \skp{t}{s}  \phi(s)^{-D}\phi(s)^{-qB} \\ 0 & 0 \end{pmatrix}, \quad s \in \R^d,
\end{equation*}
belongs to $\mathcal{I}(M)$, i.e. is integrable w.r.t $M=\Xi(\widetilde{M})$. Using \cref{24051701} (iii) the following identity holds for $s \in \R^d$ and $u=(u_1,u_2) \in \R^{2m}$ arbitrary:
\allowdisplaybreaks
\begin{align}
\widetilde{\psi} (f_t(s)^{*}u)  &= \widetilde{\psi} \left(  \begin{pmatrix}  (\cos \skp{t}{s}-1)  \phi(s)^{-qB^{*}}  \phi(s)^{-D^{*} } u_1 \\ - \sin \skp{t}{s} \phi(s)^{-qB^{*}} \phi(s)^{-D^{*}} u_1 \end{pmatrix}   \right) \notag \\
&= \widetilde{\psi} \left(  \phi(s)^{-q\widetilde{B}^{*}} \begin{pmatrix}  (\cos \skp{t}{s}-1)   \phi(s)^{-D^{*} } u_1 \\ - \sin \skp{t}{s} \phi(s)^{-D^{*}} u_1 \end{pmatrix}   \right) \notag \\
&= \phi(s)^{-q} \, \widetilde{\psi} \left(   \begin{pmatrix}  (\cos \skp{t}{s}-1)   \phi(s)^{-D^{*} } u_1 \\ - \sin \skp{t}{s} \phi(s)^{-D^{*}} u_1 \end{pmatrix}   \right) \label{eq:04071711}.
\end{align}
Choose $0< \delta < 1/ \Lambda_B$ and $0< \eps < \min \{\lambda_D, (\lambda_E-\Lambda_D)/2 \}$ to define $\rho_1:=1/ \Lambda_B - \delta$ as well as $\rho_2:=1/ \lambda_B + \delta$ again. Writing $\phi(s)=\tau(s) \phi(l(s))$, the continuity of $\theta \mapsto \norm{\phi(\theta)^{-D}}$ together with \eqref{eq:17051701} yields constants $C_1,...C_4>0$ such that
\allowdisplaybreaks
\begin{align} 
\indikator{\{\tau(s) \ge 1\}}{s} \bignorm{\phi(s)^{-D}}^{\rho_j}  & \le C_{j} \tau(s)^{- (\lambda_{D}-\eps) \rho_j} , \notag \\
\indikator{\{\tau(s) < 1\}}{s} \bignorm{\phi(s)^{-D}}^{\rho_j}  & \le C_{j+2} \tau(s)^{-(\Lambda_{D}+\eps) \rho_j} \notag
\end{align}
for any $s \in \R^d$ and $j=1,2$. Then similar arguments and $\text{spec}(E^{*})=\text{spec}(E)$ provide a further constant $C_5>0$ such that
\begin{equation*} 
\forall \, 0<r \le 1 \, \, \forall \theta \in S_{E^{*}}: \quad \norm{r^{E^{*}}\theta}^{\rho_j} \le C_5 r^{\rho_j(\lambda_E-\eps)}, \quad j=1,2.
\end{equation*}
The Cauchy-Schwarz inequality and basic inequalities for $\sin(\cdot) / \cos(\cdot)$ ensure that
\begin{equation*} 
\forall \, x,y \in \R^d: \quad |\cos \skp{x}{y}-1|^{\rho_j} + |\sin \skp{x}{y}|^{\rho_j} \le C_6 \min \{1,\norm{x}^{\rho_j} \norm{y}^{\rho_j}\} , \quad j=1,2
\end{equation*}
with $C_6>0$ appropriate. Moreover, $\norm{x}^{\rho_j} \le C_7 (\norm{x_1}^{\rho_j}+\norm{x_2}^{\rho_j})$ holds for $j=1,2$ and any $x=(x_1,x_2) \in \R^{2m}$ with some $C_7>0$. Finally and in view of $\text{spec}(\widetilde{B})=\text{spec}(B)$ we can argue as before to see that there exists a constant $C_8>0$ such that
\begin{equation*} 
|\widetilde{\psi}(x)| \le C_8 (\norm{x}^{\rho_1} +\norm{x}^{\rho_2}) , \quad x \in \R^{2m}.
\end{equation*}
Putting things together and using Proposition 2.3 in \cite{bms} (for an corresponding finite measure $\sigma$ on $S_{E^{*}}$) we obtain the following estimates for any $u=(u_1,u_2) \in \R^{2m}$. Note that $trace(E^{*})=q$ and define $C_{9,j}(u)=C_{9,j}(u_1):= C_7 C_8 \norm{u_1}^{\rho_j} \max \{C_j,C_{j+2}\} m_{\phi}^{-1}$ for $j=1,2$. 
\allowdisplaybreaks
\begin{align}
& \integral{\R^d}{}{|\widetilde{\psi}(f_t(s)^{*}u)| }{ds} \notag \\
& \le C_8 \summe{j=1}{2}   \integral{\R^d}{}{\phi(s)^{-q}   \bignorm{\begin{pmatrix}  (\cos \skp{t}{s}-1)   \phi(s)^{-D^{*} } u_1 \\ - \sin \skp{t}{s}  \phi(s)^{-D^{*}} u_1 \end{pmatrix}}^{\rho_j}  }{ds}  \notag  \\
& \le C_7 C_8 \summe{j=1}{2}  \norm{u_1}^{\rho_j} \,   \int \limits_{\R^d}^{}  \phi(s)^{-q} \left( \, |\cos \skp{t}{s} -1|^{\rho_j}   +   |\sin \skp{t}{s}|^{\rho_j} \right)  \bignorm{\phi(s)^{-D}}^{\rho_j}  \, ds  \notag \\
& \le \summe{j=1}{2} C_{9,j} \integral{\{s: \tau(s) <1\} }{}{(|\cos \skp{t}{s} -1|^{\rho_j} +|\sin \skp{t}{s}|^{\rho_j} )  \tau(s)^{-q} \tau(s)^{-(\Lambda_D +\eps)\rho_j } }{ds}  \notag \\
& \quad + \summe{j=1}{2} C_{9,j} \integral{\{s: \tau(s) \ge 1\} }{}{(|\cos \skp{t}{s} -1|^{\rho_j} +|\sin \skp{t}{s}|^{\rho_j} )  \tau(s)^{-q} \tau(s)^{-(\lambda_D -\eps)\rho_j } }{ds}  \notag \\
& \le \summe{j=1}{2}  C_{9,j}C_6 \norm{t}^{\rho_j}  \int \limits_{0}^{1} r^{q-1} \int \limits_{S_{E^{*}}}^{} \norm{r^{E^{*}}\theta }^{\rho_j}  r^{-q-(\Lambda_D +\eps)\rho_j } \, \sigma(d \theta) \, dr  \notag \\
& \quad + \summe{j=1}{2} C_{9,j} C_6 \int \limits_{1}^{\infty} r^{q-1} \int \limits_{S_{E^{*}}}^{}  r^{-q-(\lambda_D -\eps)\rho_j } \, \sigma(d \theta) \, dr \notag \\
& \le \summe{j=1}{2} \left[ C_{9,j} C_6 C_5 \, \sigma(S_{E^{*}}) \norm{t}^{\rho_j} \int \limits_{0}^{1} r^{-1+ \rho_j(\lambda_E - \Lambda_D- 2 \eps)}\, dr + C_{9,j} C_6 \, \sigma(S_{E^{*}}) \int \limits_{1}^{\infty} r^{-1-(\lambda_D -\eps)\rho_j}  \, dr \right]\notag \\
&< \infty \notag
\end{align}
by the choice of $\eps$ and since $\rho_1,\rho_2>0$. Hence it remains to show the second condition of \cref{24051701} (iii). We fix $t \in \Gamma_d$ arbitrary ($t=0$ obvious) as well as any sequence $(u_n) \subset \R^{2m}$ with limit $u$ and show that
\begin{equation} \label{eq:28061705}
\integral{\R^d}{}{ \integral{\R^{2m}}{}{(\cos \skp{f_t(s)^{*}u_n}{x}-1) }{\widetilde{\nu}(dx)} }{ds} \rightarrow  \integral{\R^d}{}{ \integral{\R^{2m}}{}{(\cos \skp{f_t(s)^{*}u}{x}-1) }{\widetilde{\nu}(dx)} }{ds} 
\end{equation}
as $n \rightarrow \infty$. First of all, since $\norm{u_n} \le M$ for any $n \in \N$ and some $M>0$, we can check as performed in \eqref{eq:30051701} that
\begin{equation*}
 \integral{\R^{2m}}{}{(\cos \skp{f_t(s)^{*}u_n}{x}-1) }{\widetilde{\nu}(dx)} \rightarrow  \integral{\R^{2m}}{}{(\cos \skp{f_t(s)^{*}u}{x}-1) }{\widetilde{\nu}(dx)}. 
\end{equation*}
The first part of the proof revealed that $|\widetilde{\psi}(f_t(s)^{*}u)| \le g(t,u,s)$ holds for any $t \in \R^d, u \in \R^{2m}$ and $s \in \Gamma_d$, where
\begin{align*} 
g(t,u,s): & 	= C_{9,j}(u) C_6 \summe{j=1}{2}  \left[ \norm{t}^{\rho_j} \norm{s}^{\rho_j} \tau(s)^{-q-(\Lambda_D+\eps)\rho_j} \indikatorzwei{\{s: \tau(s)<1\}} + \tau(s)^{-q-(\lambda_D-\eps)\rho_j} \indikatorzwei{\{s: \tau(s)\ge 1\}} \right] 
\end{align*}
being integrable in $s$. Finally we use the idea of \eqref{eq:30051701} and observe that 
\begin{equation*}
\left| \, \, \integral{\R^{2m}}{}{ (\cos \skp{f_t(s)^{*}u_n}{x}-1) }{\widetilde{\nu}(dx)} \right|  \le \left| \widetilde{\psi} (f_t(s)^{*}u_n) \right| \le g(t,u_n,s) \le g(t,\widetilde{u},s)
\end{equation*}
holds for any $n \in \N$ and $s \in \Gamma_d$ with $\widetilde{u}:=(M,0,...,,0) \in \R^{2m}$ for example (see the definition of $C_{9,j}$). Then dominated convergence yields the assertion.
\end{proof}
As in \cref{27061705} above for the moving-average representation we now collect some basic properties of the harmonizable representation obtained in \cref{27061701} above. Observe that there are some differences between both representations and that 
\begin{equation}  \label{eq:29061710}
R(z):= \frac{1}{|z|} \begin{pmatrix} (\text{Re }z) \, I_m & (\text{Im }z) \,   I_m \\ - (\text{Im }z)  \,  I_m & (\text{Re }z)  \,  I_m \end{pmatrix} \in \text{GL}(\R^{2m})
\end{equation}
is a rotation matrix for any $z \in \C \setminus \{0\}$ and $m \in \N$, i.e. $\text{det}(R(z))=1$ and $R(z)^{-1}=R(z)^{*}$. In particular the elements of the set
\begin{equation*}
\mathcal{T}(2m):= \left \{ \begin{pmatrix} (\cos \beta) I_m & (\sin \beta) I_m \\ - (\sin \beta) I_m & (\cos \beta) I_m \end{pmatrix} : \beta \in [0,2 \pi)  \right \}, \quad m \in \N
\end{equation*}
are rotation matrices. Then $\mathcal{T}(2m)$ is an Abelian group that equals the rotation group SO$(2)$ for $m=1$ and is isomorphic to SO$(2)$ for $m \ge 2$. According to Definition 2.6.2 in \cite{SaTaq94} (for $m=1$) we say that a distribution $\mu$ on $\R^m$ is \textit{isotropic}, if $(A \mu)=\mu$ for every rotation matrix $A \in \text{SO}(m)$. Observe that for the harmonizable representation in the $\alpha$-stable case considered in \cite{BiLa09}, \cite{bms} and \cite{LiXiao} one always uses a complex-valued isotropic $\alpha$-stable random measure. Now a similar, but generally weaker condition is needed in our situation in order to get stationary increments, see part (b) of the following proposition.  
\begin{prop} 
Assume that the assumptions of \cref{27061701} are fulfilled. 
\begin{itemize}
\item[(a)] If $DB=BD$, then $\widetilde{\mathbb{X}}_{\phi,D}$ is $(E,D)$-OSS and strictly Operator-stable with exponent $B$.
\item[(b)] If $(A \widetilde{\mu})=\widetilde{\mu}$ for any $A \in \mathcal{T}(2m)$, then $\widetilde{\mathbb{X}}_{\phi,D}$  has stationary increments.
\item[(c)] $\widetilde{\mathbb{X}}_{\phi,D}$ is full and stochastically continuous.
\end{itemize}
\end{prop}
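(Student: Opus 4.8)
The plan is to follow the architecture of the proof of \cref{27061705}, replacing the real-valued integration machinery of \cref{20061701} by its complexified counterpart \cref{29061701}, and to exploit the group $\mathcal{T}(2m)$ together with the isotropy hypothesis for part (b). For part (a) I would first treat strict operator-stability: for arbitrary $c_1,\dots,c_k\in\R$ and $t_1,\dots,t_k\in\R^d$ the linear combination $\sum_j c_j\widetilde{X}_{\phi,D}(t_j)$ equals $\text{Re}\,I_{\widetilde{M}}(g)$ with $g(s)=\big[\sum_j c_j(\mathrm{e}^{\im\skp{t_j}{s}}-1)\big]\phi(s)^{-D}\phi(s)^{-qB}$. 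Since $\phi(s)>0$, the matrix $\phi(s)^{-D}\phi(s)^{-qB}$ is real, and because $DB=BD$ it commutes with $B$; hence both $\text{Re }g(s)$ and $\text{Im }g(s)$ commute with $B$. Then \cref{29061701}(b), with $\widetilde{B}=B\oplus B$, yields that the combination is strictly operator-stable with exponent $B$ (strictness because the strict generator $\widetilde{\mu}$ has vanishing shifts), and applied to the vector integrand it gives exponent $B\oplus\dots\oplus B$ for each finite-dimensional law, so $\widetilde{\mathbb{X}}_{\phi,D}$ is strictly operator-stable with exponent $B$.

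For the $(E,D)$-OSS property I would compute the joint log-characteristic function of $(\widetilde{X}_{\phi,D}(c^E t_j))_j$, substitute $s\mapsto c^{-E^{*}}s$ (Jacobian $c^{-q}$), and use the $E^{*}$-homogeneity $\phi(c^{-E^{*}}s)=c^{-1}\phi(s)$. Under $DB=BD$ this turns the integrand matrix into $c^{D}c^{qB}\phi(s)^{-D}\phi(s)^{-qB}$, i.e. $c^{D}c^{qB}$ times the original integrand. In the associated $\R^{2m}$-realisation of \cref{29061701} the factor $c^{qB}$ becomes $c^{q\widetilde{B}}$; since it commutes with the associated integrand and with $c^D$ (again using $DB=BD$), after taking adjoints it can be moved to the outside, and \eqref{eq:18051711} applied to $\widetilde{\psi}$ with $s=c^{q}$ in the strict case gives $\widetilde{\psi}(c^{q\widetilde{B}^{*}}w)=c^{q}\widetilde{\psi}(w)$, using $r^{(B\oplus B)^{*}}=r^{B^{*}}\oplus r^{B^{*}}$. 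The resulting factor $c^{q}$ cancels the Jacobian, while the remaining $c^{D}$ passes to the argument as $c^{D^{*}}u_j$, so the expression equals the joint log-characteristic function of $(c^{D}\widetilde{X}_{\phi,D}(t_j))_j$, which is the claim.

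For part (b) note $\widetilde{X}_{\phi,D}(0)=0$, so stationarity of increments reduces to $\{\widetilde{X}_{\phi,D}(t+h)-\widetilde{X}_{\phi,D}(h)\}\overset{fdd}{=}\{\widetilde{X}_{\phi,D}(t)\}$. Writing the increment integrand as $\mathrm{e}^{\im\skp{h}{s}}(\mathrm{e}^{\im\skp{t}{s}}-1)\phi(s)^{-D}\phi(s)^{-qB}$, the only change from $\widetilde{X}_{\phi,D}$ is the common unimodular scalar $\mathrm{e}^{\im\skp{h}{s}}$. In the $\R^{2m}$-realisation multiplication by this scalar is multiplication by the rotation $R(\mathrm{e}^{-\im\skp{h}{s}})\in\mathcal{T}(2m)$ from \eqref{eq:29061710}, measurable in $s$; inside $\widetilde{\psi}$ it produces a factor of the form $A(s)^{*}$ with $A(s)\in\mathcal{T}(2m)$, and the isotropy hypothesis $(A\widetilde{\mu})=\widetilde{\mu}$ — equivalently $\widetilde{\psi}(A^{*}w)=\widetilde{\psi}(w)$ — removes it pointwise in $s$. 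Hence the joint log-characteristic functions coincide. For part (c), fullness for fixed $t\ne0$ follows from \cref{29061701}(a): with $f_t$ as in the proof of \cref{27061701} one has $\text{det}(\text{Re }f_t(s))=(\cos\skp{t}{s}-1)^{m}\,\text{det}(\phi(s)^{-D}\phi(s)^{-qB})$ and $\text{det}(\text{Im }f_t(s))=(\sin\skp{t}{s})^{m}\,\text{det}(\phi(s)^{-D}\phi(s)^{-qB})$, where the determinant of the invertible factor is nonzero for $s\ne0$; thus the sum of the absolute values is positive off the Lebesgue-null set $\{s:\skp{t}{s}\in2\pi\Z\}$, and any bounded $A$ of positive measure on which $\sin\skp{t}{s}\ne0$ verifies the hypothesis. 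Stochastic continuity is obtained exactly as in \cref{27061705}(b): the log-characteristic function of $\widetilde{X}_{\phi,D}(t)-\widetilde{X}_{\phi,D}(t_0)$ has integrand tending to $\widetilde{\psi}(0)=0$, while the bound $|\widetilde{\psi}(w)|\le C_8(\norm{w}^{\rho_1}+\norm{w}^{\rho_2})$ and $|\mathrm{e}^{\im\skp{t}{s}}-\mathrm{e}^{\im\skp{t_0}{s}}|\le\min\{2,\norm{t-t_0}\norm{s}\}$ provide, for $t$ in a bounded neighbourhood of $t_0$, an integrable dominating function of the type already shown integrable in the proof of \cref{27061701}; dominated convergence, as in \eqref{eq:30051701}, then forces the log-characteristic function to $0$.

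The hard part will be the associated-map bookkeeping underlying (a) and (b): one must track precisely how the real operators $c^{D}$, $c^{qB}$ and the rotations $R(\cdot)$ act on the $2m$-dimensional associated integrand and, after passing to adjoints, on the arguments of $\widetilde{\psi}$, so that the splitting $\widetilde{B}=B\oplus B$, the identity $r^{(B\oplus B)^{*}}=r^{B^{*}}\oplus r^{B^{*}}$, and the isotropy relation $\widetilde{\psi}(A^{*}w)=\widetilde{\psi}(w)$ can legitimately be invoked; in particular the commutation $DB=BD$ is what allows $c^{qB}$ to be pulled outside $\widetilde{\psi}$. Once this correspondence is set up through \cref{29061701}, the remaining estimates are exactly those already established in the proofs of \cref{27061701} and \cref{27061705}, so no genuinely new analytic difficulty should arise.
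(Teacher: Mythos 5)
Your proposal is correct and follows essentially the same route as the paper: part (a) via the joint log-characteristic function, the substitution $s\mapsto c^{-E^{*}}s$ with $E^{*}$-homogeneity and the strict-stability relation $\widetilde{\psi}(c^{q\widetilde{B}^{*}}w)=c^{q}\,\widetilde{\psi}(w)$; part (b) via the rotation $R(\text{e}^{\im\skp{h}{s}})\in\mathcal{T}(2m)$ and the equivalence of the isotropy hypothesis with $\widetilde{\psi}(A^{*}x)=\widetilde{\psi}(x)$; part (c) via \cref{29061701}(a) off the Lebesgue-null set $\{s:\text{e}^{\im\skp{t}{s}}=1\}$ and dominated convergence with the dominating function from the existence proof. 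The one point you should make explicit in the stochastic-continuity step is that the bound $|\widetilde{\psi}(w)|\le C_8(\norm{w}^{\rho_1}+\norm{w}^{\rho_2})$ is to be applied only \emph{after} factoring out $\phi(s)^{-q}$ exactly, via $\widetilde{\psi}(\phi(s)^{-q\widetilde{B}^{*}}x)=\phi(s)^{-q}\,\widetilde{\psi}(x)$ together with $\norm{R(\cdot)x}=\norm{x}$, and not to the full argument still containing $\phi(s)^{-qB^{*}}$ — estimating that factor by operator norms instead would only give exponents involving $\lambda_B/\Lambda_B$ and integrability could fail unless $\Lambda_B-\lambda_B$ is small, which is exactly the pitfall the paper's choice of integrand $\phi(s)^{-D}\phi(s)^{-qB}$ is designed to avoid.
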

\begin{proof}
If $DB=BD$, the characteristic function of $(X_{\phi,D}(t_1),...,X_{\phi,D}(t_k))$ is given by  
\begin{equation} \label{eq:07071701}
\R^{k \cdot m} \ni u=(u_1,...,u_k) \mapsto \exp \left( \, \integral{\R^d}{}{  \widetilde{\psi}\left( \begin{pmatrix} \summezwei{j=1}{k} (\cos \skp{t_j}{s} -1) \phi(s)^{-D^{*}-qB^{*}}u_j \\ - \summezwei{j=1}{k}  \sin \skp{t_j}{s} \phi(s)^{-D^{*}-qB^{*}}u_j \end{pmatrix}  \right)}{ds} \right)
\end{equation}
for any $k \in \N$ and $t_1,...,t_k \in \R^d$ (see Example 3.7 and Remark 5.12 in \cite{integral}). Since $\widetilde{B}^{*}=B^{*} \oplus B^{*}$, we can use \cref{29061701} and linearity (see Proposition 5.3 in \cite{integral}) to argue as in the corresponding proof for the operator-stability of \cref{27061705}. The claimed operator-self-similarity of $\widetilde{\mathbb{X}}_{\phi,D}$ can also be observed similar as before, hence by a change of variables again. Merely note that $\phi$ is $E^{*}$-homogeneous such that $\text{tr}(E^{*})=q$ yields
\allowdisplaybreaks
\begin{align*}
& \integral{\R^d}{}{  \widetilde{\psi}\left( \begin{pmatrix} \summezwei{j=1}{k} (\cos \skp{ t_j}{c^{E^{*}} s} -1) \phi(s)^{-D^{*}-qB^{*}}u_j \\ - \summezwei{j=1}{k}  \sin \skp{ t_j}{c^{E^{*}} s} \phi(s)^{-D^{*}-qB^{*}}u_j \end{pmatrix}  \right)}{ds} \\
&\qquad =  \integral{\R^d}{}{ c^{-q} \cdot \widetilde{\psi}\left( \begin{pmatrix} \summezwei{j=1}{k} (\cos \skp{ t_j}{s} -1)c^{D^{*}+q B^{*}}  \phi(s)^{-D^{*}-qB^{*}}u_j \\ - \summezwei{j=1}{k}  \sin \skp{ t_j}{s} c^{D^{*}+q B^{*}}  \phi(s)^{-D^{*}-qB^{*}}u_j \end{pmatrix}  \right)}{ds}
\end{align*}
for any $c>0$. For the proof of (b) observe that for any $t,h \in \R^d$ we have 
\begin{equation*}
\widetilde{X}_{\phi,D}(t+h) - \widetilde{X}_{\phi,D}(h)=\text{Re } \int_{\R^d}^{}\text{e}^{\im \skp{h}{s}} \left (\text{e}^{\im \skp{t}{s}}-1 \right) \phi(s)^{-D} \phi(s)^{-qB} \, \widetilde{M}(ds) \quad \text{a.s.} 
\end{equation*}
A simple calculation together with Remarkt 5.12 in \cite{integral} shows that the characteristic function of $\widetilde{X}_{\phi,D}(t+h) - \widetilde{X}_{\phi,D}(h)$ is given by $\exp(\int_{\R^d} \widetilde{\psi} (\zeta(h,t,s,u)) \, ds)$ for any $u \in \R^m$, where
\allowdisplaybreaks
\begin{align*}
\zeta(h,t,s,u):&= \begin{pmatrix}\left( \cos \skp{h}{s} (\cos \skp{t}{s}-1) -\sin \skp{h}{s} \sin \skp{t}{s} \right) \phi(s)^{-qB^{*}} \phi(s)^{-D^{*}} u \\ - \left(\cos \skp{h}{s} \sin \skp{t}{s}+ (\cos \skp{t}{s}-1) \sin \skp{h}{s} \right)  \phi(s)^{-qB^{*}} \phi(s)^{-D^{*}} u  \end{pmatrix} \\
&= R(\text{e}^{\im \skp{h}{s}})  \begin{pmatrix} (\cos \skp{t}{s}-1)I_m \\ - \sin \skp{t}{s}I_m \end{pmatrix} \phi(s)^{-qB^{*}} \phi(s)^{-D^{*}}u \\
&= R(\text{e}^{\im \skp{-h}{s}})^{*}  \begin{pmatrix} (\cos \skp{t}{s} -1) \phi(s)^{-qB^{*}} \phi(s)^{-D^{*}} u \\ -   \sin \skp{t}{s} \phi(s)^{-qB^{*}} \phi(s)^{-D^{*}} u\end{pmatrix} ,
\end{align*}
using \eqref{eq:29061710}. Observe that the assumption in (b) is equivalent to $\widetilde{\psi}(A^{*}x) =\widetilde{\psi}(x)$ for any $A \in \mathcal{T}(2m)$ and $x \in \R^{2m}$, see Lemma 3.1.10 in \cite{thebook} for example. Thus we obtain (b) similar to the corresponding proof for the moving-average representation. The details are left to the reader. We now proof part (c). Concerning the stochastic continuity of $\widetilde{\mathbb{X}}_{\phi,D}$ we fix $t_0 \in \R^d$ and $u \in \R^m$ as well as some null sequence $(t_n) \subset \R^d$ (particularly we have $\norm{t_n} \le M$ for some $M>0$). In view of Remark 5.12 in \cite{integral} and by what we have shown before, it suffices to show that $\int_{\R^d} \widetilde{\psi} (\zeta(t_0,t_n,s,u)) \, ds \rightarrow 0$ as $n \rightarrow \infty$. Obviously the integrand converges pointwise to $\widetilde{\psi}(0)=0$. Then we can write 
\begin{equation*}
 \widetilde{\psi} (\zeta(t_0,t_n,s,u)) = \phi(s)^{-q} \cdot \widetilde{\psi} \left (R(\text{e}^{\im \skp{t_0}{s}}) \begin{pmatrix} (\cos \skp{t_n}{s} -1)  \phi(s)^{-D^{*}} u \\ -   \sin \skp{t_n}{s}  \phi(s)^{-D^{*}} u\end{pmatrix} \right)
 \end{equation*}
as above. Obviously $\norm{R(\text{e}^{\im \skp{t_0}{s}})x}=\norm{x}$ for any $s \in \R^d$ and $x \in \R^{2m}$. Now argue as in the proof of \cref{27061701} to see that $| \widetilde{\psi} (\zeta(t_0,t_n,s,u))| \le g (\widetilde{t},\widetilde{u},s)$ with $\widetilde{t}:=(M,0,...,0) \in \R^d$ and $\widetilde{u}:=(u,0) \in \R^{2m}$ due to the definition of $g$. Then dominated convergence gives the assertion. Finally we have to show the fullness of $\widetilde{X}_{\phi,D}(t)$ for $t \in \Gamma_d$ arbitrary and immediately check that at least the real- or the imaginary part of the integrand in \eqref{eq:27061702} is invertible as long as $s \notin K:=\{0\} \cup \{s: \text{e}^{\im \skp{t}{s}}-1=0\}$. Obviously $K^c$ has positive Lebesgue measure and the fullness follows from \cref{29061701}.
\end{proof}
It seems to be open if the assumption in part (b) is also necessary for $\widetilde{\mathbb{X}}_{\phi,D}$ to have stationary increments. However, we get a lot of non-trivial examples again.
\begin{example}
For any full and strictly operator-stable generator $\widetilde{\mu}$ with exponent $B \oplus B$ the requirements of \cref{27061701} can always be fulfilled such that $DB=BD$. More precisely, we consider $E=\text{diag}(a_1,...,a_d)$ with $a_j>0$ and $D:=\gamma B$ for some $0< \gamma < \min \{a_1,...,a_d\}$. Then $\phi$ can be chosen exactly as in \cref{28061710}.
\end{example}
Finally we want to consider a special case which is still purely operator-stable and which is inspired by the $\alpha$-stable case, considered in \cite{BiLa15} and \cite{Soen16}. Hence for the rest of this section we assume that the assumptions of \cref{27061701} are fulfilled. Additionally, we assume that $DB=BD$. Furthermore let $\widetilde{\mu}$ be isotropic and suppose that 
\begin{equation*}
B=B^{*}=\text{diag}(\alpha_1^{-1},...,\alpha_m^{-1})\quad \text{with $\alpha_1,...,\alpha_m \in (0,2)$.}
\end{equation*}
For convenience we denote the corresponding harmonizable representation from \cref{27061701} by $\mathbb{Y}=\{Y(t):t \in \R^d\}$ with $Y(t)=(Y_1(t),...,Y_m(t))$ and let $q=trace(E)$ again. The first step is to understand the resulting component fields $\mathbb{Y}_j:=\{Y_j(t): t \in \R^d\}$. In the following let $f_j(s):= |\widetilde{\psi}((\phi(s)^{-D^{*}-q B^{*}}e_j,0)) |^{1 / \alpha_j}$ for $s \in \R^d$ and $j=1,...,m$.
\begin{prop} \label{05071701}
Let $n \in \N$ be arbitrary. Then the following relation holds for every $1 \le j \le m, u=(u_1,...,u_n) \in \R^n$ and $t_1,...,t_n \in \R^d$:
\begin{equation} \label{eq:30061701}
\bigerwartung{\text{e}^{\im \sum_{k=1}^{n} Y_j(t_k)u_k}} = \exp \left( - \integral{\R^d}{}{|\sum_{k=1}^{n} (\text{e}^{\im \skp{t_k}{s} } -1)u_k |^{\alpha_j} |f_j(s)|^{\alpha_j} }{ds} \right).
\end{equation}
\end{prop}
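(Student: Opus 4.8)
The plan is to start from the explicit joint characteristic function of the vector field $\mathbb{Y}$ and then collapse the argument of $\widetilde{\psi}$ onto a single one-dimensional ``direction'' by exploiting isotropy and strict operator-stability. Since $DB=BD$, the characteristic function of $(Y(t_1),\dots,Y(t_n))$ evaluated at $(v_1,\dots,v_n)\in(\R^m)^n$ is, as in \eqref{eq:07071701} and Remark 5.12 in \cite{integral}, given by
\begin{equation*}
\exp\left(\integral{\R^d}{}{\widetilde{\psi}\begin{pmatrix}\summezwei{k=1}{n}(\cos\skp{t_k}{s}-1)\phi(s)^{-D^{*}-qB^{*}}v_k \\ -\summezwei{k=1}{n}\sin\skp{t_k}{s}\,\phi(s)^{-D^{*}-qB^{*}}v_k\end{pmatrix}}{ds}\right).
\end{equation*}
First I would specialize this to $v_k=u_k e_j$ with $u_k\in\R$, so that $\skp{Y(t_k)}{e_j}=Y_j(t_k)$ reproduces the left-hand side of \eqref{eq:30061701}. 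Writing $w(s):=\phi(s)^{-D^{*}-qB^{*}}e_j$ and $c(s):=\summezwei{k=1}{n}(\text{e}^{\im\skp{t_k}{s}}-1)u_k$, the argument of $\widetilde{\psi}$ becomes $(a(s)w(s),b(s)w(s))$ with $a(s)=\real{c(s)}$ and $b(s)=-\ima{c(s)}$, so that $a(s)^2+b(s)^2=|c(s)|^2$.

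Next I would invoke isotropy. Since $\widetilde{\mu}$ is isotropic we have $\widetilde{\psi}(A^{*}x)=\widetilde{\psi}(x)$ for every rotation, in particular for every $A\in\mathcal{T}(2m)$. Because both $\R^m$-blocks of $(a(s)w(s),b(s)w(s))$ are scalar multiples of the \emph{same} vector $w(s)$, the simultaneous block action of $\mathcal{T}(2m)$ reduces to an ordinary rotation in the $(a,b)$-plane; choosing $\beta$ appropriately rotates $(a(s)w(s),b(s)w(s))$ onto $(|c(s)|\,w(s),0)$, whence $\widetilde{\psi}((a(s)w(s),b(s)w(s)))=\widetilde{\psi}(|c(s)|\,(w(s),0))$. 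Note also that $-I_{2m}\in\mathcal{T}(2m)$ (take $\beta=\pi$), so isotropy forces $\widetilde{\mu}$ to be symmetric and hence $\widetilde{\psi}$ to be real and nonpositive; this lets me drop the sign at the very end.

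The scaling step is where the specific form of $B$ enters. As $\widetilde{\mu}$ is strictly operator-stable with exponent $\widetilde{B}=B\oplus B$, relation \eqref{eq:18051711} holds with vanishing shift, i.e. $r\cdot\widetilde{\psi}(x)=\widetilde{\psi}(r^{\widetilde{B}^{*}}x)$. Since $B=B^{*}=\text{diag}(\alpha_1^{-1},\dots,\alpha_m^{-1})$, the vector $e_j$ is an eigenvector of $B$ for the eigenvalue $\alpha_j^{-1}$, and $B$ commutes with $\phi(s)^{-D^{*}-qB^{*}}$ (because $DB=BD$ together with $B=B^{*}$ yields $BD^{*}=D^{*}B$); hence $r^{B}w(s)=r^{1/\alpha_j}w(s)$ and $r^{\widetilde{B}^{*}}(w(s),0)=(r^{1/\alpha_j}w(s),0)$. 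Substituting $r=\lambda^{\alpha_j}$ then gives $\widetilde{\psi}(\lambda\,(w(s),0))=\lambda^{\alpha_j}\widetilde{\psi}((w(s),0))$ for all $\lambda\ge 0$. Applying this with $\lambda=|c(s)|$ and recalling $|f_j(s)|^{\alpha_j}=|\widetilde{\psi}((w(s),0))|=-\widetilde{\psi}((w(s),0))$ produces $\widetilde{\psi}((a(s)w(s),b(s)w(s)))=-|c(s)|^{\alpha_j}\,|f_j(s)|^{\alpha_j}$, and integrating this identity over $\R^d$ is exactly \eqref{eq:30061701}.

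I expect the main obstacle to be the bookkeeping in the rotation step: one must check that the block action of $\mathcal{T}(2m)$ genuinely collapses to a single planar rotation, which works \emph{precisely} because the two $\R^m$-components are collinear, and one must justify the commutation $B\,\phi(s)^{-D^{*}-qB^{*}}=\phi(s)^{-D^{*}-qB^{*}}B$ from $DB=BD$ and $B=B^{*}$. The integrability required to write down the characteristic function in the first place is already supplied by \cref{27061701}, so no further analytic estimates are needed.
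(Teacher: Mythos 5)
Your proof is correct and is essentially the paper's own argument: the paper likewise specializes the joint characteristic function \eqref{eq:07071701} to $v_k=u_k e_j$, factors the argument of $\widetilde{\psi}$ as a rotation $R(z)\in\mathcal{T}(2m)$ (removed by isotropy) applied to $\bigl(|z|\,\phi(s)^{-D^{*}-qB^{*}}e_j,0\bigr)$, and then uses $DB=BD$, the eigenvector identity $|z|e_j=(|z|^{\alpha_j})^{B^{*}}e_j$ and strict operator-stability to pull out the factor $|z|^{\alpha_j}$, treating $z(s)=0$ trivially via $\widetilde{\psi}(0)=0$. The only cosmetic differences are the order of the rotation and scaling steps and your explicit verification of the commutation $BD^{*}=D^{*}B$, which the paper leaves implicit.
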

\begin{proof}
Since $\widetilde{\mu}$ is isotropic, it is especially symmetric and hence $\widetilde{\psi}(\cdot) \in (-\infty,0]$. Let $z:=z(s):= \summezwei{k=1}{n} (\text{e}^{\im \skp{t_k}{s}}-1) u_k$ and $A:=A(s)=\phi(s)^{-D^{*}-qB^{*}}$ for $j,u$ and $t_k$ as above. Then the following calculation holds for $z(s) \ne 0$, since $\widetilde{\mu}$ is isotropic and since $DB=BD$.
\allowdisplaybreaks
\begin{align}
&\widetilde{\psi} \left(   \begin{pmatrix} (\summezwei{k=1}{n} (\cos \skp{t_k}{s} -1)u_k) \,  \phi(s)^{-D^{*}-qB^{*}} e_j \\ - (\summezwei{k=1}{n}  \sin \skp{t_k}{s} u_k) \,  \phi(s)^{-D^{*}-qB^{*}}e_j \end{pmatrix} \right)  \notag \\
& \qquad = \widetilde{\psi} \left( \begin{pmatrix} (\text{Re }z) A & (\text{Im }z) A \\ -(\text{Im }z) A & (\text{Re }z) A  \end{pmatrix}  \begin{pmatrix} e_j \\0 \end{pmatrix} \right)  \notag \\
& \qquad = \widetilde{\psi} \left( \frac{1}{|z|} \begin{pmatrix} (\text{Re }z) I_m & (\text{Im }z) I_m \\ -(\text{Im }z) I_m & (\text{Re }z) I_m  \end{pmatrix} \begin{pmatrix} A & 0 \\ 0 & A \end{pmatrix}  \begin{pmatrix} |z| e_j \\ 0 \end{pmatrix} \right)  \notag \\
& \qquad = \widetilde{\psi} \left(  \begin{pmatrix} A & 0 \\ 0 & A \end{pmatrix}  \begin{pmatrix} |z| e_j \\  0 \end{pmatrix} \right)  \notag \\
& \qquad = \widetilde{\psi} \left(  \begin{pmatrix} A & 0 \\ 0 & A \end{pmatrix}  (|z|^{\alpha_j})^{\widetilde{B}^{*}} \begin{pmatrix}  e_j \\  0 \end{pmatrix} \right)  \notag \\
& \qquad = \widetilde{\psi} \left( (|z|^{\alpha_j})^{\widetilde{B}^{*}}   \begin{pmatrix} A & 0 \\ 0 & A \end{pmatrix}  \begin{pmatrix}  e_j \\  0 \end{pmatrix} \right) \notag \\
& \qquad = |z|^{\alpha_j} \cdot \widetilde{\psi} \left(   \begin{pmatrix} \phi(s)^{-D^{*}-qB^{*} } e_j \\ 0 \end{pmatrix} \right). \notag 
\end{align}
Here we used that for diagonal matrices $C=\text{diag}(c_1,...,c_m)$ we have $r^C e_j=r^{c_j}e_j$. Furthermore, the previous identity is trivially true for $z(s)=0$, since $\widetilde{\psi}(0)=0$. Then the assertion follows from $Y_j(t_k) u_k = \skp{Y(t_k)}{u_k e_j}$ and \eqref{eq:07071701}.
\end{proof}
Subsequently the separation of the integrand in \eqref{eq:30061701} will turn out to be crucial in order to use the results in \cite{BiLa15}. In preparation we need sharp bounds on $f_j$. Recall (see Theorem 2.1.16 in \cite{thebook} for example) that an operator $J \in \Li{m}$ has a \textit{(real) Jordan form}, if $J=\text{diag } (J_1,...,J_p)$ for some $1 \le p \le m$, where $J_l$ can be represented as
\begin{equation} \label{eq:04071701}
J_l= \begin{pmatrix}  \lambda_l & 1 & & & \\ & \lambda_l & 1 & & \\ & & \ddots & \ddots &  \\ & & & \ddots & 1 \\ & & & & \lambda_l \end{pmatrix}
\end{equation}
for any $1 \le l \le p$ and some $\lambda_l \in \R$ (\textit{real Jordan block}) or as
\begin{equation} \label{eq:04071702}
J_l=\begin{pmatrix}  \Lambda_l & I_2 & & & \\ & \Lambda_l & I_2 & & \\ & & \ddots & \ddots &  \\ & & & \ddots & I_2 \\ & & & & \Lambda_l \end{pmatrix} \quad \text{with} \quad \Lambda_l=\begin{pmatrix} a_l & -b_l \\ b_l & a_l\end{pmatrix}
\end{equation}
for some $\lambda_l:=a_l+ \im \, b_l \in \C$ (\textit{complex Jordan block}), respectively. Hence $\lambda_1,...,\lambda_p$ are the (not necessarily different) eigenvalues of $J$ and we denote by $n(l)$ the size of the $l$-th block. Particularly, we know that for any $A \in \Li{m}$ there exists a $P \in$ GL$(\R^m)$  such that $\widetilde{A}:=P^{-1}A P$ has a (real) Jordan form. Finally we have $\lambda_l \in \text{spec}(A)$ for every block as in \eqref{eq:04071701} or \eqref{eq:04071702}. Although the order of blocks can be arbitrary (by permuting the columns of $P$), $AB=BA$ does not imply $\widetilde{A}B=B \widetilde{A}$ in general.

\begin{lemma} \label{04071703}
Fix $L>0$ and $1 \le j \le m$. Then we have:
\begin{itemize}
\item[(i)] For any $0< \eps < \lambda_D$ there exists a constant $K_1>0$ such that
\begin{equation*}
f_j(s) \le K_1 \tau_{E^{*}}(s)^{-(\lambda_D-\eps)-1/Ê\alpha_j}, \quad \norm{s} \ge L.
\end{equation*}
\item[(ii)] Assume that $D=\text{diag}(D_1,...,D_p)$ for some $1 \le p \le m$ with $D_l$ as in \eqref{eq:04071701} or \eqref{eq:04071702} for $1 \le l \le p$ and define $j_{+} \in \{1,...,p\}$ in virtue of $\sum_{l=1}^{j_{+}-1} n(l) <j \le \sum_{l=1}^{j_{+}} n(l)$. Then, for any $0< \eps < \text{Re } \lambda_{j_{+}}$, there exists a constant $K_2>0$  such that
\begin{equation*}
f_j(s) \le K_2 \tau_{E^{*}}(s)^{-(\text{Re } \lambda_{j_{+}}-\eps)-1/Ê\alpha_j}, \quad \norm{s} \ge L.
\end{equation*}
\end{itemize}
\end{lemma}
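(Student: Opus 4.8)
The plan is to exploit that, along the distinguished direction $e_j$, the isotropic generator $\widetilde{\mu}$ behaves like a classical $\alpha_j$-stable law, so that $\widetilde{\psi}$ is $\alpha_j$-homogeneous in that direction. Since $\widetilde{\mu}$ is isotropic it is symmetric, hence $\widetilde{\psi}(\cdot)\in(-\infty,0]$ and the shifts in \eqref{eq:18051711} vanish, giving $r\cdot\widetilde{\psi}(x)=\widetilde{\psi}(r^{\widetilde{B}^{*}}x)$ for all $r>0$ and $x\in\R^{2m}$. Because $B^{*}=B=\text{diag}(\alpha_1^{-1},\dots,\alpha_m^{-1})$ we have $B^{*}e_j=\alpha_j^{-1}e_j$, and since $DB=BD$ forces $D^{*}B^{*}=B^{*}D^{*}$, the operator $\phi(s)^{-D^{*}}=\exp(-\ln\phi(s)\,D^{*})$ maps the $\alpha_j^{-1}$-eigenspace $V_j$ of $B^{*}$ into itself. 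Consequently $(\phi(s)^{-D^{*}}e_j,0)$ is an eigenvector of $\widetilde{B}^{*}=B^{*}\oplus B^{*}$ for the eigenvalue $\alpha_j^{-1}$, so $r^{\widetilde{B}^{*}}(v,0)=r^{\alpha_j^{-1}}(v,0)$ for $v\in V_j$ and the scaling relation yields $\widetilde{\psi}(t(v,0))=t^{\alpha_j}\widetilde{\psi}(v,0)$ for every $t>0$ and $v\in V_j$. Using $\phi(s)^{-qB^{*}}e_j=\phi(s)^{-q/\alpha_j}e_j$ together with this homogeneity for $t=\phi(s)^{-q/\alpha_j}$, I would factor
\begin{equation*}
f_j(s)^{\alpha_j}=\left|\widetilde{\psi}\big(\phi(s)^{-D^{*}-qB^{*}}e_j,0\big)\right|=\phi(s)^{-q}\left|\widetilde{\psi}\big(\phi(s)^{-D^{*}}e_j,0\big)\right|.
\end{equation*}

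Next I would estimate the remaining factor. By the $\alpha_j$-homogeneity just established and continuity of $\widetilde{\psi}$ on the compact unit sphere of $V_j$, there is $G_j>0$ with $|\widetilde{\psi}(v,0)|\le G_j\norm{v}^{\alpha_j}$ for all $v\in V_j$; applied to $v=\phi(s)^{-D^{*}}e_j$ this gives $|\widetilde{\psi}(\phi(s)^{-D^{*}}e_j,0)|\le G_j\norm{\phi(s)^{-D^{*}}e_j}^{\alpha_j}$. For part (i) I bound $\norm{\phi(s)^{-D^{*}}e_j}\le\norm{\phi(s)^{-D^{*}}}$ and invoke \eqref{eq:17051701} with the small argument $\phi(s)^{-1}$ (legitimate once $\norm{s}\ge L$ is large enough, since then $\phi(s)\ge m_\phi\tau_{E^{*}}(s)\to\infty$), using $\lambda_{D^{*}}=\lambda_D$, to obtain $\norm{\phi(s)^{-D^{*}}}\le C\,\phi(s)^{-(\lambda_D-\eps)}$ for any fixed $0<\eps<\lambda_D$. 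Combining with the displayed factorization and $m_\phi\tau_{E^{*}}(s)\le\phi(s)$ (both exponents being negative) yields
\begin{equation*}
f_j(s)^{\alpha_j}\le C'\,\phi(s)^{-q}\phi(s)^{-\alpha_j(\lambda_D-\eps)}\le K_1^{\alpha_j}\,\tau_{E^{*}}(s)^{-q-\alpha_j(\lambda_D-\eps)},
\end{equation*}
and taking $\alpha_j$-th roots gives the claimed bound with exponent $-(\lambda_D-\eps)-q/\alpha_j$.

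For part (ii) the only change is the norm estimate: since $D=\text{diag}(D_1,\dots,D_p)$ is block diagonal, so is $\phi(s)^{-D^{*}}$, and because $e_j$ sits in the coordinate range of the $j_{+}$-th block we have $\phi(s)^{-D^{*}}e_j=\phi(s)^{-D_{j_{+}}^{*}}e_j$, whose norm is controlled by $\norm{\phi(s)^{-D_{j_{+}}^{*}}}$. The block $D_{j_{+}}$ carries the single eigenvalue $\lambda_{j_{+}}$, so $\lambda_{D_{j_{+}}^{*}}=\text{Re}\,\lambda_{j_{+}}$, and \eqref{eq:17051701} applied on that subspace gives $\norm{\phi(s)^{-D_{j_{+}}^{*}}}\le C\,\phi(s)^{-(\text{Re}\,\lambda_{j_{+}}-\eps)}$; repeating the computation above then replaces $\lambda_D$ by $\text{Re}\,\lambda_{j_{+}}$. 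The main point to get right is the first paragraph: the reduction to $\alpha_j$-homogeneity relies on $(\phi(s)^{-D^{*}}e_j,0)$ staying inside a single $\widetilde{B}^{*}$-eigenspace, which needs both $B^{*}e_j=\alpha_j^{-1}e_j$ and the commutation $D^{*}B^{*}=B^{*}D^{*}$; everything afterwards is a matter of inserting \eqref{eq:17051701} and the comparison $\phi\asymp\tau_{E^{*}}$, with the block localization in (ii) being the only genuinely new ingredient.
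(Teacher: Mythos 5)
Your proof is correct, and its skeleton matches the paper's: use $DB=BD$ and the strict scaling relation \eqref{eq:18051711} to factor $f_j(s)^{\alpha_j}=\phi(s)^{-q}\,|\widetilde{\psi}((\phi(s)^{-D^{*}}e_j,0))|$, reduce the remaining factor to a norm estimate on $\phi(s)^{-D^{*}}e_j$ via \eqref{eq:17051701}, and pass from $\phi$ to $\tau_{E^{*}}$ through $\phi\ge m_{\phi}\tau_{E^{*}}$ with negative exponents. The execution differs in two places, both in favor of self-containedness. First, where the paper bounds $|\widetilde{\psi}(x)|\le C\,\tau_{\widetilde{B}}(x)$ via generalized polar coordinates, identifies $\tau_B(y)=\tau_{\widetilde{B}}((y,0))$, and then uses the $B$-homogeneity of $\tau_B$ (together with the diagonal form of $B$ and a second application of the commutation) to extract the factor $\phi(s)^{-\alpha_j(\lambda_D-\eps)}$, you work entirely inside the $B^{*}$-eigenspace $V_j$ --- invariant under $\phi(s)^{-D^{*}}$ by commutation --- where $\widetilde{\psi}(\,\cdot\,,0)$ is genuinely $\alpha_j$-homogeneous in the Euclidean sense; this yields $|\widetilde{\psi}((v,0))|\le G_j\norm{v}^{\alpha_j}$ directly and avoids $\tau_B$ altogether. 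The two estimates coincide in substance (on $V_j$ one has $\tau_B(v)=\norm{v}_B^{\alpha_j}$, comparable to $\norm{v}^{\alpha_j}$), but your route is more elementary. Second, for part (ii) the paper appeals to ``a slight extension of Corollary 2.2(b)'' in \cite{Soen16} to obtain $\norm{\phi(s)^{-D^{*}}e_j}\le C\,\phi(s)^{-\text{Re }\lambda_{j_{+}}+\eps}$, whereas you derive this from block-diagonality of $\phi(s)^{-D^{*}}$ plus the fact that the single block $D_{j_{+}}$ satisfies $\lambda_{D_{j_{+}}}=\Lambda_{D_{j_{+}}}=\text{Re }\lambda_{j_{+}}$, so that \eqref{eq:17051701} applies blockwise; that replaces the citation by a short direct argument. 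Two minor remarks: your caveat ``once $\norm{s}\ge L$ is large enough'' is unnecessary --- for every $L>0$, $\phi$ is bounded below on $\{s:\norm{s}\ge L\}$ and \eqref{eq:17051701} holds for any cutoff $s_0$, with constants (hence $K_1,K_2$) allowed to depend on $L$; and your reading of the statement's garbled exponent as $-(\lambda_D-\eps)-q/\alpha_j$ (respectively with $\text{Re }\lambda_{j_{+}}$) is indeed the one the paper's own proof delivers.
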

\begin{proof}
As in \eqref{eq:04071711} we observe via $DB=BD$ that $f_j(s)=\phi(s)^{-q/ \alpha_j} |\widetilde{\psi} ((\phi(s)^{-D^{*}}e_j,0)) |^{1/ \alpha_j}$ for any $s \in \Gamma_d$. At the same time we can use the polar coordinates w.r.t. $\widetilde{B}=\widetilde{B}^{*}$ and verify the existence of a constant $C_1>0$ with $|\widetilde{\psi}(x)| \le C_1 \tau_{\widetilde{B}}(x)$ for any $x \in \R^{2m}$. Thus we have (see the notation in the proof of \cref{27061701}):
\allowdisplaybreaks
\begin{align} 
f_j(s) &\le C_1^{1/ \alpha_j} \phi(s)^{-q/ \alpha_j} \, \tau_{\widetilde{B}} \left(  \begin{pmatrix} \phi(s)^{-D^{*}} e_j \\ 0 \end{pmatrix}  \right)^{1/ \alpha_j} \notag \\
& \le C_1^{1/ \alpha_j} m_{\phi}^{-q / \alpha_j} \, \tau_{E^{*}}(s)^{-q / \alpha_j} \, \tau_{B} \left(   \phi(s)^{-D^{*}} e_j   \right)^{1/ \alpha_j} , \quad s \in \Gamma_d ,\notag
\end{align}
since $\tau_B(y)=\tau_{\widetilde{B}}((y,0))$ for any $y \in \R^m$ which can be checked easily with the remarks at the beginning of chapter 2. Now fix $\eps>0$ as in (i) or (ii) and notice that it remains to show that there exist constants $C_3,C_4>0$ with
\begin{equation*}
\text{(i)}\, \tau_B(\phi(s)^{-D^{*}} e_j) \le C_3 \tau_{E^{*}}(s)^{-\alpha_j(\lambda_D-\eps)}, \qquad \text{(ii)} \, \tau_B(\phi(s)^{-D^{*}} e_j) \le C_4 \tau_{E^{*}}(s)^{-\alpha_j(\text{Re } \lambda_{j_+}-\eps)}
\end{equation*}
for any $\norm{s} \ge L$, respectively. Concerning (i) recall that $D^{*}B=B D^{*}$ and that $\tau_B$ is a $B$-homogeneous function. Hence we obtain thanks to the form of $B$ again that
\allowdisplaybreaks
\begin{align*}
\tau_B(\phi(s)^{-D^{*}} e_j) &= \phi(s)^{-\alpha_j(\lambda_D - \eps)} \tau_B\left( \phi(s)^{\alpha_j(\lambda_D- \eps) B } \phi(s)^{-D^{*}} e_j \right) \\
&= \phi(s)^{-\alpha_j(\lambda_D - \eps)} \tau_B\left(  \phi(s)^{-D^{*}}  \phi(s)^{\alpha_j(\lambda_D- \eps) B } e_j \right) \\
&= \phi(s)^{-\alpha_j(\lambda_D - \eps)} \tau_B\left(  \phi(s)^{-D^{*}}  \phi(s)^{\lambda_D- \eps  } e_j \right).
\end{align*}
Since $\text{spec}(D)=\text{spec}(D^{*})$ we have
\begin{equation*}
\bignorm{ \phi(s)^{-D^{*}}  \phi(s)^{\lambda_D- \eps  } e_j}  \le \phi(s)^{\lambda_D-\eps} \, \norm{\phi(s)^{-D^{*}}} \le C_0 \, \phi(s)^{\lambda_D-\eps} \phi(s)^{- (\lambda_D-\eps)}=  C_0
\end{equation*}
for any $\norm{s} \ge L$ and some constant $C_0$ due to \eqref{eq:17051701}. Since $\tau_B$ is continuous, it follows that
\begin{equation*}
C_3:= m_{\phi}^{- \alpha_j (\lambda_D - \eps)} \, \sup \{\tau_B (\phi(s)^{(\lambda_D- \eps) I_m -D^{*}} e_j): \norm{s} \ge L \}<\infty. 
\end{equation*}
Similarly in the situation of (ii) we merely have to verify that $\phi(s)^{\text{Re } \lambda_{j_{+}}-\eps} \, \norm{\phi(s)^{-D^{*}} e_j} $ is bounded for $\norm{s} \ge L$. For this purpose the definition of $j_{+}$ and  $D^{*}=\text{diag}(D_1^{*},...,D_p^{*})$ obviously permit a slight extension of Corollary 2.2 (b) in \cite{Soen16}, leading to 
\begin{equation*}
\norm{\phi(s)^{-D^{*}} e_j} \le C_0'  \,\phi(s)^{- \text{Re } \lambda_{j_{+}} + \eps}, \quad \norm{s} \ge L
\end{equation*}
for a corresponding constant $C_0'$.
\end{proof}
The following result generalizes Proposition 4.1 in \cite{Soen16}.
\begin{theorem} \label{03071701}
Assume that the previous assumptions are fulfilled and let $\beta_j:= \text{Re } \lambda_{j_+}$ for $j=1,...,m$, if $D$ has a Jordan form as in \cref{04071703} (ii) and $\beta_j:= \lambda_D$ else. Then we have: For any non-empty, compact subset $K \subset \R^d$ as well as for any $\delta>0$ and $0<\eps< \min \beta_j$ there exists a modification $\mathbb{Y}^{*}:=\{Y^{*}(t)=(Y_1^{*}(t),...,Y^{*}_m(t)): t \in K \}$ of $\mathbb{Y}$ (on $K$), i.e. $Y(t)=Y^{*}(t)$ a.s. for every $t \in K$, such that 
\begin{equation} \label{eq:03071702}
\underset{\substack{s,t \in K \\ s \ne t} }{\sup} \,  \frac{|Y_j^{*}(s) - Y_j^{*}(t)| }{\tau_E(s-t)^{\beta_j- \eps} [\log (1+\tau_E(s-t)^{-1})]^{\delta+\frac{1}{2}+\frac{1}{\alpha_j}  } } < \infty \quad \text{a.s.}
\end{equation}
for $j=1,...,m$. Particularly, for any $0<\eps< \min \beta_j$ there exists a modification $\mathbb{Y}^{*}$ of $\mathbb{Y}$ on $K$ such that we have:
\begin{equation*}
\forall s,t \in K : \qquad |Y_j^{*}(s) - Y_j^{*}(t)|  \le A_j \, \tau_E(s-t)^{\beta_j - \eps}  \quad \text{a.s.}
\end{equation*}
 for $j=1,...,m$ and appropriate $[0,\infty)$-valued random variables $A_j$.
\end{theorem}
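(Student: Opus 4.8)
The plan is to reduce \cref{03071701} to a componentwise statement about ordinary real harmonizable $\alpha_j$-stable random fields and then feed the resulting scale estimate into the general modulus-of-continuity machinery of \cite{BiLa15} (as in Proposition 4.1 of \cite{Soen16}). By \cref{05071701}, taking $n=2$, $t_1=s$, $t_2=t$ and $u_1=-u_2=u$ shows that each increment $Y_j(s)-Y_j(t)$ is a real symmetric $\alpha_j$-stable random variable; since $|\text{e}^{\im \skp{s}{\xi}}-\text{e}^{\im \skp{t}{\xi}}|=|\text{e}^{\im \skp{s-t}{\xi}}-1|$, its scale parameter depends only on $h:=s-t$ and is given by
\begin{equation*}
\sigma_j(s,t)^{\alpha_j}=\integral{\R^d}{}{\left|\text{e}^{\im \skp{s-t}{\xi}}-1\right|^{\alpha_j}|f_j(\xi)|^{\alpha_j}}{d\xi}=:\sigma_j(h)^{\alpha_j},
\end{equation*}
so that $\mathbb{Y}_j$ is a real harmonizable $\alpha_j$-stable field with stationary increments.

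The heart of the proof is the scale estimate: for any $0<\eps<\min_j \beta_j$ there is a $C>0$ with $\sigma_j(h)\le C\,\tau_E(h)^{\beta_j-\eps}$ for all $h$ in the bounded set $\{s-t:s,t\in K\}$ (recall $K$ is compact). First I would split the defining integral at $\norm{\xi}=L$. On the far part I insert the sharp bound on $f_j$ from \cref{04071703}: case (i) gives $|f_j(\xi)|^{\alpha_j}\le K_1^{\alpha_j}\tau_{E^{*}}(\xi)^{-\alpha_j \lambda_D-1}$ and, when $D$ is already in the Jordan form required there, case (ii) gives the sharper $|f_j(\xi)|^{\alpha_j}\le K_2^{\alpha_j}\tau_{E^{*}}(\xi)^{-\alpha_j \text{Re}\,\lambda_{j_{+}}-1}$; this is exactly where the two definitions of $\beta_j$ enter. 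Passing to $E^{*}$-polar coordinates via Proposition 2.3 in \cite{bms}, using $\tau_{E^{*}}(a^{E^{*}}\xi)=a\,\tau_{E^{*}}(\xi)$ together with $\skp{\tau_E(h)^{E}l_E(h)}{\xi}=\skp{l_E(h)}{\tau_E(h)^{E^{*}}\xi}$, and then substituting $\xi\mapsto \tau_E(h)^{-E^{*}}\eta$ (whose Jacobian contributes $\tau_E(h)^{-q}$ since $\text{trace}(E^{*})=q$), I expect to extract exactly the factor $\tau_E(h)^{\alpha_j(\beta_j-\eps)}$, the leftover integral being finite by the same bookkeeping on $\eps$, $\delta$ and the hypothesis $\lambda_E>\Lambda_D$ that already made \eqref{eq:27061702} integrable in \cref{27061701}. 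On the near part $\norm{\xi}<L$ I would only use $|\text{e}^{\im\skp{h}{\xi}}-1|^{\alpha_j}\le(\norm{h}\,\norm{\xi})^{\alpha_j}$ to pull out $\norm{h}^{\alpha_j}\le C\,\tau_E(h)^{\alpha_j(\lambda_E-\delta)}$, a term dominated by $\tau_E(h)^{\alpha_j(\beta_j-\eps)}$ on the bounded difference set because $\beta_j\le\Lambda_D<\lambda_E$, the remaining $\xi$-integral being finite by the convergence already established in \cref{27061701}.

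With $\sigma_j(h)\le C\,\tau_E(h)^{\beta_j-\eps}$ in hand, the field $\mathbb{Y}_j$ satisfies the hypotheses of the modulus-of-continuity theorem in \cite{BiLa15}, which produces a modification $\mathbb{Y}_j^{*}$ on $K$ with \eqref{eq:03071702}, the logarithmic exponent $\delta+\tfrac12+\tfrac1{\alpha_j}$ being the standard output for stability index $\alpha_j$; collecting the components gives $\mathbb{Y}^{*}=(Y_1^{*},\dots,Y_m^{*})$. For the final assertion I would apply \eqref{eq:03071702} with some $\eps_0\in(0,\eps)$: since $\tau_E(s-t)$ stays bounded on $K$ and $\tau_E(s-t)^{-(\eps-\eps_0)}$ beats $[\log(1+\tau_E(s-t)^{-1})]^{\delta+\frac12+\frac1{\alpha_j}}$ as $s\to t$, the quotient $\tau_E(s-t)^{\beta_j-\eps_0}[\log(1+\tau_E(s-t)^{-1})]^{\delta+\frac12+\frac1{\alpha_j}}\big/\tau_E(s-t)^{\beta_j-\eps}$ is bounded, so $A_j:=\sup_{s\ne t}|Y_j^{*}(s)-Y_j^{*}(t)|/\tau_E(s-t)^{\beta_j-\eps}$ is a.s. finite. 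The main obstacle is the scale estimate of the middle paragraph — in particular simultaneously handling the blow-up of $f_j$ near the origin and its decay at infinity while extracting the precise homogeneity exponent $\beta_j$ — rather than the final appeal to \cite{BiLa15}.
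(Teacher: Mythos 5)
The scale estimate in your middle paragraph is a correct computation, but the step that follows it --- ``with $\sigma_j(h)\le C\,\tau_E(h)^{\beta_j-\eps}$ in hand, the field $\mathbb{Y}_j$ satisfies the hypotheses of the modulus-of-continuity theorem in \cite{BiLa15}'' --- is where the argument breaks down. Proposition 5.1 in \cite{BiLa15} is not a Kolmogorov-type criterion whose input is a bound on increment scale parameters; its hypotheses are pointwise decay bounds on the spectral density of a field that is \emph{given} as $\text{Re}\int_{\R^d}(\text{e}^{\im\skp{t}{\xi}}-1)f(\xi)\,M_{\alpha}(d\xi)$ with respect to a complex isotropic $\alpha$-stable random measure: the LePage-series, conditionally sub-Gaussian machinery behind that proposition works with the kernel $f$ itself, because the relevant conditional Gaussian metric is a random, kernel-dependent object. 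Indeed, no theorem of the form ``stationary $S\alpha S$ increments plus scale estimate implies a continuous modification with a modulus'' can exist for $\alpha<2$: the moving-average fields of \cref{14061705} satisfy entirely analogous scale estimates, yet by \cref{27061703} (citing Proposition 6.1 in \cite{BiLa09}) their modifications should have a.s.\ discontinuous paths. So an increment-scale bound is genuinely insufficient information for a stable field, and your reduction discards exactly the structure that is needed.

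What the paper does instead --- and what your first paragraph almost supplies --- is the following: for each $j$ it constructs a genuine complex isotropic $\alpha_j$-stable ISRM $M_{\alpha_j}$ (generated by the distribution with Fourier transform $x\mapsto\exp(-\norm{x}^{\alpha_j})$ and Lebesgue measure) and the auxiliary fields
\begin{equation*}
Z_j'(t):=\integral{\R^d}{}{(\text{e}^{\im\skp{t}{s}}-1)f_j(s)}{M_{\alpha_j}(ds)},
\end{equation*}
which have literally the representation (17) of \cite{BiLa15}; it applies Proposition 5.1 of \cite{BiLa15} to $\mathbb{Z}_j'$ using the kernel bounds of \cref{04071703} directly (no scale estimate is ever computed); it then verifies by a characteristic-function computation based on \cref{05071701} that $\mathbb{Y}_j$ and $\text{Re }\mathbb{Z}_j'$ have the same finite-dimensional distributions, and transfers the existence of a modification satisfying \eqref{eq:03071702} back to $\mathbb{Y}_j$, which is legitimate because that existence statement depends only on the finite-dimensional distributions. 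Your first paragraph does contain the fdd identification of $\mathbb{Y}_j$ as a real harmonizable $\alpha_j$-stable field with spectral density $|f_j|^{\alpha_j}$, so the proposal can be repaired by replacing the scale-estimate paragraph with this construction-plus-transfer step and quoting \cref{04071703} as the actual hypothesis fed into \cite{BiLa15}; as written, however, the appeal to \cite{BiLa15} invokes a hypothesis that theorem does not have. (Your handling of the theorem's final assertion --- absorbing the logarithmic factor by lowering $\eps$ --- does match the paper's argument.)
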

\begin{proof}
For $j=1,...,m$ let $M_{\alpha_j}$ be the $\C$-valued ISRM that we obtain by identification with the $\R^{2}$-valued one which is generated by the $d$-dimensional Lebesgue measure and the distribution $\mu_j$ with Fourier transform $\R^2 \ni x \mapsto \exp(-\norm{x}^{\alpha_j})$. That is $M_{\alpha_j}$ is a complex isotropic $\alpha_j$-stable random measure in the sense of \cite{SaTaq94}. Similar as in the proof of \cref{05071701} we then see that
\begin{equation*}
\left|  (\text{e}^{\im \skp{t}{s}}-1) f_j(s) \right|^{\alpha_j}  = \left| \widetilde{\psi} \left(   \begin{pmatrix} (\cos \skp{t}{s} -1) \phi(s)^{-qB^{*}} \phi(s)^{-D^{*}}  e_j \\ -  \sin \skp{t}{s} \phi(s)^{-qB^{*}} \phi(s)^{-D^{*}} e_j \end{pmatrix} \right) \right|.
\end{equation*}
Furthermore, the proof of \cref{27061701} revealed that $\left|  (\text{e}^{\im \skp{t}{s}}-1) f_j(s) \right|^{\alpha_j}$ is integrable (in $s$) for every $t \in \R^d$. By definition of the Frobenius norm $\norm{\cdot}_F$ we also note that 
\begin{equation*}
\bignorm{\begin{pmatrix} (\cos \skp{t}{s}-1)f_j(s) & - \sin \skp{t}{s} f_j(s) \\  \sin \skp{t}{s} f_j(s) & (\cos \skp{t}{s}-1)f_j(s)    \end{pmatrix}}^{\alpha_j}_F = 2^{\alpha_j/2} \,  \left|  (\text{e}^{\im \skp{t}{s}}-1) f_j(s) \right|^{\alpha_j}.
\end{equation*}
Overall, in view of \cref{30051702} and the results in \cite{inteâgral}, this proves that the $\C$-valued random fields $\mathbb{Z}_j'=\{Z_j'(t): t \in \R^d\}$, defined via
\begin{equation*}
Z_j'(t):= \integral{\R^d}{}{ (\text{e}^{\im \skp{t}{s}}-1) f_j(s)}{M_{\alpha_j}(ds)}, \quad t \in \R^d,
\end{equation*}
exist for $j=1,...,m$ (on appropriate probability spaces). And except for a negligible, multiplicative constant the fields $\mathbb{Z}_j'$ have a representation as in (17) in \cite{BiLa15} (see also (6.3.1) in \cite{SaTaq94}). Hence Proposition 5.1 in \cite{BiLa15} can be applied (together with \cref{04071703}) and yields: For $K, \delta$ and $\eps$ as mentioned above (i.e. $0<\beta_j-\eps<\lambda_E$) there exists a modification $\mathbb{Z}_j=\{Z_j(t): t \in K \}$ of $\mathbb{Z}_j'$ (for $j=1,...,m$) such that
\begin{equation}  \label{eq:05071705}
\underset{\substack{s,t \in K \\ s \ne t} }{\sup} \,  \frac{|Z_j(s) - Z_j(t)| }{\tau_E(s-t)^{\beta_j- \eps} [\log (1+\tau_E(s-t)^{-1})]^{\delta+\frac{1}{2}+\frac{1}{\alpha_j}  } } < \infty \quad \text{a.s.}
\end{equation}
Then \eqref{eq:05071705} also holds with $\text{Re } Z_j$ in the numerator and $\text{Re } \mathbb{Z}_j$ is a modification of $\text{Re } \mathbb{Z}'_j$ as well. This allows to finish the proof quite similar to Proposition 4.1 in \cite{Soen16} as well as to Proposition 5.1 in \cite{BiLa15}, because $\mathbb{Y}_j $ and $\text{Re } \mathbb{Z}_j'$ have the same finite-dimensional distributions. To verify this we fix $1 \le j \le m, t_1,...,t_n \in \R^d$ and $u_1,...,u_n \in \R$ for some $n \in \N$. Then we obtain with \cref{05071701} and Remark 5.12 in \cite{integral} that
\allowdisplaybreaks
\begin{align*}
& \bigerwartung{\text{e}^{\im \summezwei{k=1}{n} (\text{Re } Z'_j (t_k)) u_k }} \\
& = \exp \left(- \integral{\R^d}{}{\bignorm{\begin{pmatrix} \summezwei{k=1}{n} (\cos\skp{t_k}{s}-1) f_j(s) u_k \\  - \summezwei{k=1}{n} \sin \skp{t_k}{s} f_j(s) u_k \end{pmatrix} }^{\alpha_j} }{ds}  \right) \\
&  = \exp \left(- \integral{\R^d}{}{ \left(  \left(\summezwei{k=1}{n} (\cos\skp{t_k}{s}-1)u_k \right)^2 f_j(s)^2 + \left (\summezwei{k=1}{n} \sin \skp{t_k}{s} u_k \right)^2 f_j(s)^2  \right)^{\alpha_j /2 } }{ds}  \right) \\
&  = \exp \left(- \integral{\R^d}{}{ | \summezwei{k=1}{n} \left(\text{e}^{\im \skp{t_k}{s} } -1 \right)u_k |^{\alpha_j} |f_j(s)|^{\alpha_j} }{ds}  \right)  \\
& = \bigerwartung{\text{e}^{\im \summezwei{k=1}{n} Y_j(t_k) u_k }}.
\end{align*}
For the additional statement of this theorem merely use \eqref{eq:03071702} and write $\tau_E(s-t)^{\beta_j- \frac{\eps}{2}}= \tau_E(s-t)^{\beta_j- \eps} \tau_E(s-t)^{ \frac{\eps}{2}}$ since the growth of the logarithm is more slowly than that of any polynomial with positive degree.
\end{proof}
\begin{remark}
\begin{itemize}
\item[(a)] By a slight refinement of the previous proof (and the use of Proposition 5.1 in \cite{BiLa15}) it is possible to define a modification that lives on whole $\R^d$, fulfilling \eqref{eq:03071702} accordingly for any $K,\delta$ and $\eps$ as mentioned above.
\item[(b)] A combination of \cref{03071701} and the results in \cite{Soen17} allows to calculate the Hausdorff dimension of the image and the graph of the harmonizable representation, at least for the present case. Then we should get back the results of Theorem 5.1 in \cite{Soen16} which will be independent of $B$ again, i.e. they only depend on $\text{spec}(E)$ and $\text{spec}(D)$.
\end{itemize}
\end{remark}
\section*{Acknowledgement}
The results of this paper are part of the first author's PhD-thesis (see \cite{Diss}), written under the supervision of the second named author. Morevover, the authors would like to thank Ercan S\"onmez for fruitful discussions leading to \cref{03071701}.


\end{document}